\documentclass[12pt]{amsart}
\usepackage[toc,page]{appendix}
\usepackage{amssymb, latexsym, euscript}
\usepackage{amssymb}
\usepackage{latexsym}
\usepackage{amsmath}

\def\sD{{\mathfrak D}}      
   \def\sH{{\mathfrak H}}   
\def\sJ{{\mathfrak J}}   \def\sK{{\mathfrak K}}   \def\sL{{\mathfrak L}}
\def\sM{{\mathfrak M}}   \def\sN{{\mathfrak N}}   
\def\sP{{\mathfrak P}}

      \def\dC{{\mathbb C}}
\def\dD{{\mathbb D}}

   \def\dN{{\mathbb N}}   
      \def\dR{{\mathbb R}}
   \def\dT{{\mathbb T}}   
      
   \def\dZ{{\mathbb Z}}

\def\cA{{\mathcal A}}   \def\cB{{\mathcal B}}   \def\cC{{\mathcal C}}
\def\cD{{\mathcal D}}      
      
\def\cJ{{\mathcal J}}      
\def\cM{{\mathcal M}}   \def\cN{{\mathcal N}}   
   \def\cQ{{\mathcal Q}}   \def\cR{{\mathcal R}}
\def\cS{{\mathcal S}}   \def\cT{{\mathcal T}}   \def\cU{{\mathcal U}}
\def\cV{{\mathcal V}}   \def\cW{{\mathcal W}}

   \def\bB{{\mathbf B}}

\def\bS{{\mathbf S}}

\def\cRS{\mathcal{RS}}

\def\d1{{\mathcal D}}

\def\wt{\widetilde}

\def\d1{{\mathcal D}}
\def\CM{{\cM}}

\def\wt#1{{{\widetilde #1} }}
\def\wh#1{{{\widehat #1} }}

\def\bm\chi{\mbox{\boldmath$\chi$}}

\def\RE{{\rm Re\,}}
\def\IM{{\rm Im\,}}

\def\ran{{\rm ran\,}}
\def\cran{{\rm \overline{ran}\,}}
\def\dom{{\rm dom\,}}
\def\mul{{\rm mul\,}}

\def\cdom{{\rm \overline{dom}\,}}

\let\xker=\ker \def\ker{{\xker\,}}

\def\cspan{{\rm \overline{span}\, }}

\def\cmr{{\dC \backslash \dR}}
\def\uphar{{\upharpoonright\,}}

\def\RE{{\rm Re\,}}
\def\IM{{\rm Im\,}}

\def\wt{\widetilde}

\def\f{\varphi}

\textwidth=16.5cm \textheight=22.5cm \hoffset=-17mm \voffset=-16mm
\headheight=14pt \headsep=22pt

\newtheorem{theorem}{Theorem}[section]
\newtheorem{lemma}[theorem]{Lemma}
\newtheorem{proposition}[theorem]{Proposition}

\newtheorem{definition}[theorem]{Definition}
\newtheorem{remark}[theorem]{Remark}


\def\RE{{\rm Re\,}}
\def\IM{{\rm Im\,}}

\def\wt{\widetilde}
\def\wh{\widehat}

\def\f{\varphi}

\def\uphar{{\upharpoonright\,}}

\numberwithin{equation}{section}
\begin{document}

\title[Stieltjes and inverse Stieltjes families]
{Stieltjes and inverse Stieltjes holomorphic families of linear relations and their representations}
\author[Yury Arlinski\u{\i}]{Yu.M. Arlinski\u{\i}}
\address{Volodymyr Dahl East Ukrainian National University \\
pr. Central 59-A, Severodonetsk, 93400, Ukraine}
\email{yury.arlinskii@gmail.com}

\author[Seppo Hassi]{S. Hassi}
\address{Department of Mathematics and Statistics  \\
University of Vaasa  \\
P.O. Box 700  \\
65101 Vaasa  \\
Finland} \email{sha@uwasa.fi}
\subjclass[2010]
{47A06, 47A20, 47A48, 47A56, 47B25, 47B44, 47B49}

\keywords{Nevanlinna family, Stieltjes family, inverse Stieltjes family,  transfer function, compressed resolvent}
\thanks{This research was partially supported by a grant from the Vilho,
Yrj\"o and Kalle V\"ais\"al\"a Foundation of the Finnish Academy of
Science and Letters. Yu.M.~Arlinski\u{\i} also gratefully acknowledges
financial support from the University of Vaasa.}

\vskip 1truecm
\thispagestyle{empty}
\baselineskip=12pt

\date{ \today;  Filename: \jobname}
\begin{abstract}
We study analytic and geometric properties of Stieltjes and inverse Stieltjes families defined on a separable Hilbert space and establish various minimal representations for them by means of compressed resolvents of various types of linear relations. Also attention is paid to some new peculiar properties of Stieltjes and inverse Stieltjes families,
including an analog for the notion of inner functions which will be characterized in an explicit manner. In addition, families which admit different types
of scale invariance properties are described. Two transformers that naturally appear in the Stieltjes and inverse Stieltjes classes are introduced and their fixed points
are identified.
\end{abstract}
\maketitle


\section{Introduction}

The main objects in this paper are the general classes of operator-valued Stieltjes and inverse Stieltjes functions or, more generally, Stieltjes and inverse Stieltjes families of linear relations, whose values depend on a complex variable $\lambda\in \dC\setminus{\dR_+}$ and whose values are linear relations acting on some fixed Hilbert space $\sM$. Functions in these classes are Nevanlinna functions or, more generally, Nevanlinna families
which admit holomorphic continuation to the negative semi-axis $\dR_-$. Such functions typically appear in boundary value problems when modeling physical phenomena and they offer an important analytic tool for the spectral analysis of associated nonnegative selfadjoint operators; see e.g.  \cite{AG,Kac1,KacK,KacK2} for some classical treatments in the case of scalar functions. In particular, functions belonging to the inverse Stieltjes class are relevant in the spectral analysis of the Friedrichs extension $A_F$ of a nonnegative operator $A$, cf. \cite{KacK,BHS2009CAOT,ArlHassi_2015OT}.

In order to introduce a general definition of Stieltjes and inverse Stieltjes classes, first recall the definition of a Nevanlinna family;
cf. e.g. \cite{DHMS06,DHMS09,DdeS,LT} and the references therein.

\begin{definition}\label{Nevanlinna}
A family of linear relations $\CM(\lambda)$, $\lambda \in \cmr$, in a
Hilbert space $\sM$ is called a \textit{Nevanlinna family} if:
\begin{enumerate}
\def\labelenumi{\rm (\roman{enumi})}
\item $\CM(\lambda)$ is maximal dissipative for every $\lambda \in \dC_+$
(resp. accumulative for every $\lambda \in \dC_-$);
\item $\CM(\lambda)^*=\cM(\bar \lambda)$, $\lambda \in \cmr$;
\item for some (and hence for all) $\mu \in \dC_+\, (\dC_-)$ the
      operator family $(\CM(\lambda)+\mu)^{-1} (\in \bB(\sM))$ is
      holomorphic for all $\lambda \in \dC_+\, (\dC_-)$.
\end{enumerate}
\end{definition}

The class of all Nevanlinna families in a Hilbert space $\sM$ is denoted by $\wt R(\sM)$.
Each Nevanlinna family $\CM$ admits a unique decomposition to its operator part $M_s(\lambda)$,
$\lambda\in\cmr$, and the constant multi-valued part $M_\infty$:
\[
  \CM(\lambda)=M_s(\lambda)\oplus M_\infty,\quad
  M_\infty=\{0\}\times\mul \CM(\lambda).
\]
Here $M_s(\lambda)$ is a Nevanlinna family of densely defined
operators in the subspace $\sM \ominus \mul \CM(\lambda)$.

The main objective of the present paper is a study of \textit{Stieltjes} and  \textit{inverse Stieltjes} holomorphic families of linear relations,
which are defined as follows.

\begin{definition}\label{invStieltjes}
A family of linear relations $\CM(\lambda)$, $\lambda \in \dC\setminus\dR_+$, in a
Hilbert space $\sM$ is said to be a \textit{Stieltjes family} (respectively, \textit{inverse Stieltjes family})
if it is a Nevanlinna family and, moreover,
\begin{enumerate}
\def\labelenumi{\rm (\roman{enumi})}
\item $\cM(x)\ge 0$ for all $x<0$ (respectively, $\cM(x)\le 0$ for all $x<0$),
\item for some (and hence for all) $\mu \in \dC_+\,(\dC_-)$ the
      operator family $(\CM(\lambda)+\mu)^{-1} (\in \bB(\sM))$ with $\lambda \in \dC_+\,(\dC_-)$
      admits an analytic continuation to the semi-axis $\lambda \in (-\infty,0)$.
\end{enumerate}
\end{definition}

The classes of all Stieltjes and inverse Stieltjes families in a Hilbert space $\sM$
are denoted by $\wt \cS(\sM)$ and $\wt\cS^{(-1)}(\sM)$, respectively.

By definition, a Stieltjes family (resp. inverse Stieltjes family) is a Nevanlinna family, which
admits a holomorphic continuation to the negative semi-axis
$(-\infty,0)$ and whose values for $x\in (-\infty,0)$ are nonnegative (resp. nonpositive) and in view of (ii) also selfadjoint relations.
As in the case of scalar functions these classes are connected to each other via appropriate inversions:
if $\cM(\lambda)$ is a Stieltjes (resp. inverse Stieltjes) family, then
\begin{enumerate}
\item[(a)] $-\cM(1/\lambda)$ is an inverse Stieltjes (resp. Stieltjes) family,
\item[(b)] $-\cM^{-1}(\lambda)$ is an inverse Stieltjes (resp. Stieltjes) family.
\end{enumerate}
Here item (a) is clear. To see (b) apply the formula
\begin{equation}\label{invres}
 \left(H^{-1}-\frac{1}{z}\right)^{-1}=-z-z^2\left(H-z\right)^{-1} \quad
 \left(z\in\rho(H)\quad\Longleftrightarrow\quad z^{-1}\in\rho(H^{-1})\right)
\end{equation}
to $H=M(\lambda)$ and $z=-\mu$ ($\lambda,\mu\in\dC_+$ or $\lambda,\mu\in\dC_-$).
Hence, if $(\CM(\lambda)+\mu)^{-1} (\in \bB(\sM))$ admits an analytic continuation to
$\lambda \in (-\infty,0)$ the same is true for $(\CM(\lambda)^{-1}+\mu^{-1})^{-1} (\in \bB(\sM))$.
This shows equivalence in property (ii) of Definition~\ref{invStieltjes}; the equivalence in property (i)
is clear from $M(x)\geq 0$ $\Leftrightarrow$  $-M(x)^{-1}\leq 0$. In fact, another proof for (b) is contained
in Lemma~\ref{TH1} appearing in Section~\ref{sec3}.

An important example of a Nevanlinna family is obtained by compressing the resolvent $(\wt A-\lambda)^{-1}$
of a selfadjoint relation $\wt A$ in a Hilbert space $\sH$ to some subspace $\sM$ of $\sH$:
\begin{equation}\label{compres1}
 P_\sM (\wt A-\lambda)^{-1}\uphar \sM \in \wt R(\sM),
\end{equation}
which is an operator valued Nevanlinna function.
If, in addition, $\wt A$ is nonnegative, then $P_\sM (\wt A-\lambda)^{-1}$ is a Stieltjes family of bounded operators.

A selfadjoint relation $\wt A$ in the orthogonal sum $\sH=\sM\oplus\sK$ is called \textit{minimal with respect to $\sM$} or, $\sM$-\textit{minimal} for short (see \cite[page 5366]{DHMS06}), if
\begin{equation}\label{amini}
 \sH=\cspan\left\{\sM+(\wt A-\lambda I)^{-1}\sM: \lambda\in\cmr\right\}.
\end{equation}
The set $\cmr$ in \eqref{amini} can be replaced by a union of two open sets, with one open set from $\dC_+$ and the other one from $\dC_-$.
Moreover, this definition of minimality can be extended to non-selfadjoint relations $\wt A$ in $\sH=\sM\oplus\sK$ with $\rho(\wt A)\neq \emptyset$
by replacing the set $\cmr$ in \eqref{amini} by the resolvent set $\rho(\wt A)$, or by a union of open sets, including one open set
from each connected component of $\rho(\wt A)$. In what follows this minimality condition in this more general form is applied to nonnegative and, more generally, to maximal accretive relations with $\lambda$ in \eqref{amini} taken from the left half-plane in $\dC$.

Two selfadjoint relations $\wt A^{(1)}$ and $\wt A^{(2)}$ in the Hilbert spaces $\sM\oplus\sK^{(1)}$ and $\sM\oplus\sK^{(2)}$, respectively, are
said to be \textit{unitarily equivalent} if there exists a unitary operator $\cV$ acting from $\sK^{(1)}$ onto $\sK^{(2)}$, such that
\[
\wt A^{(2)}=\left\{\left\{\begin{bmatrix}\f\cr \cV f\end{bmatrix},\begin{bmatrix}\f'\cr \cV f'\end{bmatrix}\right\}:\left\{\begin{bmatrix}\f\cr f\end{bmatrix},\begin{bmatrix}\f'\cr f'\end{bmatrix}\right\}\in\wt A^{(1)}\right\},\; \f,\f'\in\sM,\;f,f'\in\sK.
\]
In \cite{DHMS06} in the context of the Weyl families of boundary relations it has been proven that
for an arbitrary Nevanlinna family $\cM$ in the Hilbert space $\sM$ there exists (up to unitary equivalence)
a unique selfadjoint relation $\wt A$ in the Hilbert space $\sM\oplus\sK$ which is $\sM$-minimal such that the formula
\begin{equation}
\label{opexpr1}
 P_\sM(\wt A-\lambda I)^{-1}\uphar\sM=-(\cM(\lambda)+\lambda I)^{-1},\; \lambda\in\cmr,
\end{equation}
holds; see the proof of \cite[Theorem~3.9]{DHMS06}.
Inverting the formula \eqref{opexpr1} in the relation sense leads to an equivalent expression
\begin{equation}
\label{opexpr1B}
\cM(\lambda)=-\left(P_\sM\left(\wt A-\lambda I\right)^{-1}\uphar\sM\right)^{-1}-\lambda I_\sM ,\; \lambda\in\cmr.
\end{equation}
If, in addition, $\wt A$ is nonnegative, then it follows from the general property (b) when applied to the
Stietjes function \eqref{compres1} that the Nevanlinna family $\cM(\cdot)$ in \eqref{opexpr1B} is, in fact,
an inverse Stieltjes family.
As shown in Theorem \ref{obratno1}, this representation describes all inverse Stieltjes families.

The formula \eqref{opexpr1} is closely related with the description of generalized resolvents by A.V. Shtraus \cite{shtraus1954};
cf. \cite[Theorem~5.2]{DHMS09}. It should be also noted that the literature related to representations of operator valued
Nevanlinna functions and Nevanlinna families as compressed resolvents of selfadjoint exit space extensions has been
studied extensively and various related contributions can be found e.g. in
\cite{ArlBelTsek2011, ArlHassi_2015, AHS2, ArlKlotz2010, BMNW2017, DdeS, KalWo, KL1, KL2, KL, McKelvey, LT}.
Some subclasses of Stieltjes and inverse Stieltjes matrix-valued functions have been considered in \cite{ArlBelTsek2011}, where the realizations of Nevanlinna matrix-valued functions as the impedance functions of singular $L$-systems are studied.

In the present paper the special attention is in characteristic properties as well as in various descriptions of
Stieltjes and inverse Stieltjes families. Since these families are special type of Nevanlinna families it is of interest to
characterize those selfadjoint relations which yield to their representations by means of compressed resolvents analogous to
the formula \eqref{opexpr1}. A closer investigation of the properties of these families is obtained by using suitable
linear fractional transformations of the (graphs) of selfadjoint relations. In particular, the following two transformations
will frequently appear in this paper:
the transformation $\sP_\sM$ defined in $(\sM\oplus\sK)^2$ by the formula
\begin{equation}
\label{ghtjha21}
 \sP_\sM:
 \left\{ \begin{bmatrix} \f \\ f \end{bmatrix},
        \begin{bmatrix} \f' \\ f' \end{bmatrix}
 \right\}
 {\mapsto}
 \left\{ \begin{bmatrix} \f'\\ f  \end{bmatrix},
         \begin{bmatrix}  \f \\ f'\end{bmatrix}
          \right\},\quad \f,\f'\in\sM,\; f,f'\in\sK,
 \end{equation}
and the transformation $\sJ_\sM$, which is defined in $(\sM\oplus\sK)^2$ by the formula
\begin{equation}
\label{ghtjha1}
\sJ_\sM:
 \left\{ \begin{bmatrix} \f \\ f \end{bmatrix},
        \begin{bmatrix} \f' \\ f' \end{bmatrix}
 \right\}
 {\mapsto}
 \left\{ \begin{bmatrix} -i\f'\\ f  \end{bmatrix},
         \begin{bmatrix}  i\f \\ f'\end{bmatrix}
 \right\},\quad   \f,\f'\in\sM,\; f,f'\in\sK.
 \end{equation}
Each of these transformations is an involution in $(\sM\oplus\sK)^2$:
$\left(\sJ_\sM\right)^2=\left(\sP_\sM\right)^2=I_{(\sM\oplus\sK)^2}$.

On the other hand, in establishing the main results of the present paper also various relationships between selfadjoint
contractions, nonnegative selfadjoint relations, resolvents of selfadjoint relations,
and transfer functions of passive selfadjoint system (studied recently in \cite{ArlHassi_2018}, cf. Appendix \ref{AppendA})
will be used.

Main results in this paper can be briefly described as follows:
\begin{itemize}
\item One-to-one correspondences between the classes of Stieltjes/inverse Stieltjes families in the Hilbert space $\sM$ and transfer functions from the
combined Nevanlinna-Schur class $\cRS(\sM)$ (being recently studied in \cite{ArlHassi_2018}) of discrete-time passive selfadjoint systems are established (see Lemma \ref{TH1}).

\item It is proved (see Theorems \ref{obratno1}, \ref{stieltacc}, and~\ref{zwanzig}) that inverse Stieltjes families in $\sM$ admit $\sM$-minimal representations of the form \eqref{opexpr1} by means of compressed resolvents of \textit{nonnegative selfadjoint relations} \textit{for all} $\lambda\in\dC\setminus\dR_+$, while for Stieltjes families there are $\sM$-minimal representations of the form \eqref{opexpr1} by means of the compressed resolvents of a selfadjoint relation $\wh A$ for all $\lambda\in\cmr$ analogous to \eqref{opexpr1} by means of \textit{maximal accretive} relations $\wh B$ for $\lambda$ in the open left half-plane. Here the selfadjoint relations $\wh A$ and $\wh B$ are connected with a nonnegative selfadjoint relation $\wt A$ via the transformations \eqref{ghtjha1}, \eqref{ghtjha21}: $\sP_\sM(\wh B)=\sJ_\sM(\wh A)=\wt A$.

 \item In Section \ref{innerfamily} \textit{inner functions in the Stieltjes and inverse Stieltjes classes} are introduced and characterized.

 \item In Section \ref{scaleinv11} all those Stieltjes and inverse Stieltjes families $\cM$, which admit the following scaling property
\[
 \cM(c\lambda)=c^p\cM(\lambda)\; \forall\lambda\in\cmr
\]
for some $c\in\dR_+$ and for some $p\in\{0,1,-1\}$ will be described. We call such families \textit{scale invariant}.

\item In Section \ref{sec4.3} two transformers $\Phi_+$ and $\Phi_-$ appearing in the classes of Stieltjes and inverse Stieltjes families are shortly studied.
In particular, we identify the fixed points of the mappings
\[
 \wt\cS(\sM)\ni\cQ(\lambda)\mapsto -\cfrac{\cQ(\lambda)^{-1}}{\lambda}\in\wt\cS(\sM)
\]
and
\[
 \wt\cS^{(-1)}(\sM)\ni\cR(\lambda)\mapsto -\lambda \cR(\lambda)^{-1}\in\wt\cS^{(-1)}(\sM)
\]
and offer two appropriate realizations for them; these will be also used as examples for the representation results given in Section \ref{sec3}.
\end{itemize}

{\bf Notations.}
We use the symbols $\dom T$, $\ran T$, $\ker T$ for
the domain, the range, and the null-subspace of a linear operator
$T$. The closures of $\dom T$, $\ran T$ are denoted by $\cdom T$,
$\cran T$, respectively. The identity operator in a Hilbert space
$\sH$ is denoted by  $I$ and sometimes by $I_\sH$. If $\sL$ is a
subspace, i.e., a closed linear subset of $\sH$, the orthogonal
projection in $\sH$ onto $\sL$ is denoted by $P_\sL.$ The notation
$T\uphar \cN$ means the restriction of a linear operator $T$ on the
set $\cN\subset\dom T$. The resolvent set of $T$ is denoted by
$\rho(T)$.
The linear space of bounded
operators acting between Hilbert spaces $\sH$ and $\sK$ is denoted
by $\bB(\sH,\sK)$ and the Banach algebra $\bB(\sH,\sH)$ by
$\bB(\sH).$ $\dC_{+}/\dC_-$ denotes the open upper/lower half-plane of $\dC$, $\dR$ denotes the set of real numbers, $\dR_+:=[0,+\infty)$,
$\dZ$ and $\dN$ are the sets of integers and natural numbers, $\dN_0:=\dN\cup\{0\}$, $\dD=\{z\in\dC:|z|<1\}$ is the unit disk, $\dT=\{\zeta\in\dC:|\zeta|=1\}$ is the unit circle. By ${\bf S}(\sH_1,\sH_2)$ we denote the \textit{Schur class} (the set of all holomorphic and contractive
$\bB(\sH_1,\sH_2)$-valued functions on the unit disk) and ${\bf S}(\sH):=\bS(\sH,\sH).$
For a contraction
$T\in\bB(\sH,\sK)$ the defect operator $(I-T^*T)^{1/2}$ is denoted
by $D_T$ and $\sD_T:=\cran D_T$. For defect operators one has the
commutation relations
\[
 TD_T = D_{T^*}T, \quad T^*D_{T^*}=D_{T}T^*.
\]
A linear relation $\cA$ in a Hilbert space $\sH$ is called
symmetric if $\cA\subset \cA^*$, selfadjoint if $\cA=\cA^*$,
skew-symmetric if $\cA\subset -\cA^*$, skew-selfadjoint if $\cA=- \cA^*$,
and nonnegative if $(f',f)\ge 0$ for all $\{f,f'\}\in \cA$. Throughout
this paper we consider separable Hilbert spaces over the field $\dC$
of complex numbers. For general treatments and various standard properties of linear relations
used in this paper we refer to \cite{Ar, codd1974, CS, DdeS}.

\section{Transforms of linear relations, Nevanlinna families and the Schur class}

\subsection{Transforms of linear relations in orthogonally decomposed Hilbert spaces}
Let $\sH$ be a Hilbert space and let $\sM$ be a subspace of $\sH$ and decompose $\sH=\sM\oplus\sK$, where $\sK:= \sH\ominus\sM$.
Define a fundamental symmetry in $\sH=\sM\oplus\sK$ by
\begin{equation}\label{krefund}
\wh J_\sM=\begin{bmatrix} -I_\sM&0\cr 0& I_\sK\end{bmatrix}.
\end{equation}
The adjoint of (the graph of) $T$ w.r.t. to the indefinite inner product $(\wh J_\sM h,k)_\sH$, $h,k\in\sH$,
is denoted by $T^{[*]}:=\wh J_\sM T^*\wh J_\sM$, where $T^*$ stands for the Hilbert space adjoint of $T$ in $\sH$
w.r.t. to the original inner product $(h,k)_\sH$, $h,k\in\sH$. Then one can define the notions of $\wh J_\sM$-symmetric ($\wh B\subset \wh B^{[*]}$),
$\wh J_\sM$-selfadjoint ($\wh B=\wh B^{[*]}$), and $\wh J_\sM$-dissipative ($\IM (\wh J_\sM u',u)\geq 0,\, \{u,u'\}\in\wh B$) for linear relations $\wh B$ in $\sH$.

The main properties of the transformation $\sP_\sM$ in \eqref{ghtjha21} are described in the next proposition.

\begin{proposition}
\label{ghblev}
Let $\wt A$ be a linear relation in the Hilbert space $\sH=\sM\oplus\sK$ and let $\wh B=\sP_\sM(\wt A)$ be defined by \eqref{ghtjha21}.
Then:
\begin{enumerate}
\def\labelenumi{\rm (\roman{enumi})}
\item the transformation $\sP_\sM$ preserves adjoints as follows
\begin{equation}\label{adjoints}
 \sP_\sM(\wt A^{*})=\wh B^{[*]}
\end{equation}
and it establishes a one-to-one correspondence between symmetric (selfadjoint, (maximal) dissipative) relations $\wt A$ in $\sH$ to $\wh J_\sM$-symmetric (resp. $\wh J_\sM$-selfadjoint, (maximal) $\wh J_\sM$-dissipative) relations $\wh B$ in $\sH$;

\item if $\wh h=\{h,h'\},\wh k=\{k,k'\}\in \sH^2$ and $\{u,u'\}=\sP_\sM\wh h, \{v,v'\}=\sP_\sM\wh k$, then
\begin{equation}\label{skew}
 (h',k)+(h,k')=(u',v)+(u,v'),
\end{equation}
in particular, the transformation $\sP_\sM$ preserves the real parts, $\RE (f',f)=\RE (u',u)$ and, hence,
$\wt A$ is accretive (maximal accretive, skew-symmetric, skew-selfadjoint) precisely when the transform
$\wh B=\sP_\sM(\wt A)$ is accretive (resp. maximal accretive, skew-symmetric, skew-selfadjoint);

\item the transformation $\sP_\sM$ establishes a one-to-one correspondence between nonnegative (nonnegative selfadjoint, i.e. maximal nonnegative)
relations $\wt A$ in $\sH$ and $\wh J_\sM$-symmetric accretive (resp. $\wh J_\sM$-selfadjoint maximal accretive) relations $\wh B$ in $\sH$;

\item  the nonnegative selfadjoint relation $\wt A$ and its $\wh J_\sM$-selfadjoint maximal accretive transform $\wh B=\sP_\sM(\wt A)$ are simultaneously $\sM$-minimal,
\begin{equation}\label{ABmin}
 \cspan\left\{\sM+\left(\wt A-\lambda I\right)^{-1}\sM:\lambda\in \dC\setminus\dR_+\right\}
 =\cspan\left\{\sM+\left(\wh B-\lambda I\right)^{-1}\sM: \RE\lambda<0\right\}.
\end{equation}
\end{enumerate}
\end{proposition}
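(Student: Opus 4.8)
The plan is to reduce parts (i)--(iii) to two elementary sesquilinear identities for $\sP_\sM$, and part (iv) to one short resolvent computation together with an analyticity argument. Write $P_\sM,P_\sK$ for the orthogonal projections of $\sH$ onto $\sM$ and onto $\sK$; then \eqref{ghtjha21} reads $\sP_\sM\{h,h'\}=\{P_\sM h'+P_\sK h,\,P_\sM h+P_\sK h'\}$, which exhibits $\sP_\sM$ as a linear bijection of $\sH^2$ onto itself (an involution, as already noted) that preserves inclusions of graphs and hence sends linear relations to linear relations. The computational heart is the pair of identities, valid for $\{u,u'\}=\sP_\sM\{h,h'\}$ and $\{v,v'\}=\sP_\sM\{k,k'\}$,
\[
 (h',k)+(h,k')=(u',v)+(u,v') ,\qquad (h',k)-(h,k')=(\wh J_\sM u',v)-(\wh J_\sM u,v') ,
\]
both obtained by inserting $u=P_\sM h'+P_\sK h$, $u'=P_\sM h+P_\sK h'$ (and similarly for $v,v'$) and using $P_\sM P_\sK=P_\sK P_\sM=0$, $P_\sM+P_\sK=I$, $P_\sM^{*}=P_\sM$, $P_\sK^{*}=P_\sK$. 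Evaluating the second identity over all $\{h,h'\}\in\wt A$ identifies $\wt A^{*}$ with $\wh B^{[*]}$ under $\sP_\sM$, which is \eqref{adjoints}; since $\sP_\sM$ preserves inclusions this at once yields the symmetric $\leftrightarrow$ $\wh J_\sM$-symmetric and selfadjoint $\leftrightarrow$ $\wh J_\sM$-selfadjoint correspondences, and the diagonal case $\{k,k'\}=\{h,h'\}$ of the same identity gives $\IM(h',h)=\IM(\wh J_\sM u',u)$, so dissipativity is preserved---and maximal dissipativity too, as $\sP_\sM$ is an inclusion-preserving bijection. This settles (i).

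For (ii) I would use the first identity: its diagonal case gives $\RE(h',h)=\RE(u',u)$, so accretivity is preserved---and then maximal accretivity, again since $\sP_\sM$ is an inclusion-preserving bijection; the identity for all pairs in $\wt A$ shows that $(h',k)+(h,k')$ vanishes identically on $\wt A$ exactly when $(u',v)+(u,v')$ vanishes identically on $\wh B$, i.e. skew-symmetry is preserved, and combining this with \eqref{adjoints} handles skew-selfadjointness. Part (iii) then follows formally: a relation is nonnegative iff it is symmetric and accretive, and nonnegative selfadjoint ($=$ maximal nonnegative) iff it is selfadjoint and accretive (the latter being then automatically maximal accretive, since its spectrum lies in $\dR_+$); plugging these characterisations into (i) and (ii), with bijectivity of $\sP_\sM$, gives the two stated one-to-one correspondences.

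For (iv) the plan is to prove equality of the two closed spans in \eqref{ABmin}, which I denote $\sN_{\wt A}$ and $\sN_{\wh B}$; ``simultaneously $\sM$-minimal'' then follows, since one span equals $\sH$ iff the other does. Note that $\dC\setminus\dR_+\subseteq\rho(\wt A)$ ($\wt A$ being nonnegative selfadjoint) and $\{\lambda:\RE\lambda<0\}\subseteq\rho(\wh B)$ ($\wh B$ being maximal accretive). The crux is one computation: for $\RE\lambda<0$ and $g\in\sM$, put $w:=(\wt A-\lambda I)^{-1}g$, so $\{w,g+\lambda w\}\in\wt A$; then, using $g\in\sM$,
\[
 \{u,u'\}:=\sP_\sM\{w,g+\lambda w\}=\{\,g+\lambda P_\sM w+P_\sK w,\;P_\sM w+\lambda P_\sK w\,\}\in\wh B ,
\]
and the key point is that $u'-\lambda u=(1-\lambda^{2})P_\sM w-\lambda g\in\sM$. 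Hence $u\in(\wh B-\lambda I)^{-1}\sM\subseteq\sN_{\wh B}$, so $P_\sK w=u-(g+\lambda P_\sM w)\in\sN_{\wh B}$ and therefore $w=P_\sM w+P_\sK w\in\sN_{\wh B}$. The identical computation with $\wt A$ and $\wh B$ interchanged---legitimate since $(\sP_\sM)^{2}=I$, so $\wt A=\sP_\sM(\wh B)$---shows $(\wh B-\lambda I)^{-1}g\in\sN_{\wt A}$ for $\RE\lambda<0$ and $g\in\sM$.

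The step I expect to be the main obstacle is reconciling the two parameter ranges in \eqref{ABmin}: the computation above only delivers the required memberships when the \emph{same} $\lambda$ may be fed to both resolvents, i.e. only for $\RE\lambda<0$, whereas $\sN_{\wt A}$ is generated by all $\lambda\in\dC\setminus\dR_+$ and $\rho(\wh B)$ need not contain that whole set. I would remove this by analyticity: since $\dC\setminus\dR_+$ is connected and $\lambda\mapsto(\wt A-\lambda I)^{-1}f$ is holomorphic there, a vector orthogonal to $\sM$ and to $(\wt A-\lambda I)^{-1}\sM$ for every $\lambda$ with $\RE\lambda<0$ is, by the identity theorem, orthogonal to $(\wt A-\lambda I)^{-1}\sM$ for all $\lambda\in\dC\setminus\dR_+$; hence $\sN_{\wt A}=\cspan\{\sM+(\wt A-\lambda I)^{-1}\sM:\RE\lambda<0\}$. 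With this reduction, the first half of the crux computation gives $\sN_{\wt A}\subseteq\sN_{\wh B}$ and the second half gives $\sN_{\wh B}\subseteq\sN_{\wt A}$, which proves \eqref{ABmin} and completes (iv).
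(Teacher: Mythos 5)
Your proof is correct and follows essentially the same route as the paper: the same two sesquilinear identities (the paper's \eqref{adj1} and \eqref{skew}) drive (i)--(iii), and (iv) rests on the same fixed-$\lambda$ identification of $\sM+(\wt A-\lambda I)^{-1}\sM$ with $\sM+(\wh B-\lambda I)^{-1}\sM$ for $\RE\lambda<0$. The only real difference is that you make explicit the identity-theorem step extending the span from the left half-plane to all of $\dC\setminus\dR_+$, which the paper leaves implicit (it is covered by the remark following \eqref{amini}).
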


\begin{proof}
(i) Let $h=\begin{bmatrix} \f\cr f \end{bmatrix},\, h'=\begin{bmatrix} \f'\cr f'\end{bmatrix}\in\sH$
and $k=\begin{bmatrix} \psi\cr g \end{bmatrix},\, k'=\begin{bmatrix} \psi'\cr g'\end{bmatrix}\in\sH$
be decomposed according to $\sH=\sM\oplus\sK$. Then
\begin{equation}\label{adj1}
\begin{array}{ll}
 (h',k)-(h,k') &=(\f',\psi)+(f',g)-(\f,\psi')-(f,g') \\
               &=\left(\wh J_\sM \begin{bmatrix} \f \cr f' \end{bmatrix}, \begin{bmatrix} \psi' \cr g \end{bmatrix} \right)
  -\left(\wh J_\sM \begin{bmatrix} \f' \cr f \end{bmatrix},\begin{bmatrix} \psi \cr g' \end{bmatrix} \right).
\end{array}
\end{equation}
By applying this identity to the elements $\wh h=\{h,h'\}\in\wt A$ and $\sP_\sM\wh h\in\sP_\sM(\wt A)=\wh B$ one concludes that
\[
 \wh k=\{k,k'\}\in \wt A^* \quad\Longleftrightarrow\quad
 \sP_\sM\wh k=\left\{\begin{bmatrix} \psi' \cr g \end{bmatrix},\begin{bmatrix} \psi \cr g' \end{bmatrix} \right\} \in \wh B^{[*]}.
\]
This proves \eqref{adjoints}. Hence, in particular,
\[
 \wt A\subset \wt A^* \quad\Longleftrightarrow\quad \wh B\subset  \wh B^{[*]},\quad
 \wt A=\wt A^* \quad\Longleftrightarrow\quad \wh B=\wh B^{[*]}.
\]
Moreover, by applying \eqref{adj1} with $\wh h=\wh k$ shows that
\[
 \IM \left(h',h \right) = \IM \left(\wh J_\sM \begin{bmatrix} \f \cr f' \end{bmatrix}, \begin{bmatrix} \f' \cr f \end{bmatrix} \right)
\]
and hence $\wt A\geq 0$ is (maximal) dissipative precisely when $\wt B$ is (maximal) $\wh J_\sM$-dissipative.

(ii) The formula \eqref{skew} follows from
\[
 (h',k)+(h,k')=(\f',\psi)+(f',g)+(\f,\psi')+(f,g')
 =\left( \begin{bmatrix} \f \cr f' \end{bmatrix}, \begin{bmatrix} \psi' \cr g \end{bmatrix}\right)
  +\left( \begin{bmatrix} \f' \cr f \end{bmatrix}, \begin{bmatrix} \psi \cr g' \end{bmatrix}\right).
\]
With $\wh h=\wh k$ and $\{u,u'\}=\sP_\sM\wh h$ this identity shows that $2\RE (h',h)=2\RE (u',u)$.
Hence $\RE (h',h)\geq 0,\,(=0)$ for all $\wh h\in \wt A$ if and only if $\RE (u',u)\geq 0,\,(=0)$ for all $\wh u\in \wh B$,
which proves the assertions.

(iii) This is obtained by combining the statements in (i) and (ii).

(iv) By item (iii) $\wt A$ is nonnegative and selfadjoint precisely when  $\wh B=\sP_\sM(\wt A)$ is $\wh J_\sM$-selfadjoint and maximal accretive.
Hence, if $\RE \lambda<0$ then $\lambda\in \rho(\wt A)\cap \rho(\wh B)$. Now let $\{h,h'\}\in \wt A$ and decompose $h,h'\in\sH$ as in the proof of (i).
Then, equivalently,
\[
 \left\{\begin{bmatrix} \f'-\lambda \f \cr f'-\lambda f\end{bmatrix},\begin{bmatrix} \f\cr f\end{bmatrix} \right\} \in (\wt A-\lambda)^{-1}
\quad\Longleftrightarrow\quad
 \left\{\begin{bmatrix} \f-\lambda \f' \cr f'-\lambda f\end{bmatrix},\begin{bmatrix} \f'\cr f\end{bmatrix} \right\} \in (\wh B-\lambda)^{-1}.
\]
Since
\[
 \begin{bmatrix} \f'-\lambda \f \cr f'-\lambda f\end{bmatrix} \in \begin{array}{c}\sM\\ \oplus\\ \{0\}\end{array}
\quad\Longleftrightarrow\quad   f'=\lambda f \quad\Longleftrightarrow\quad
 \begin{bmatrix} \f-\lambda \f' \cr f'-\lambda f\end{bmatrix} \in \begin{array}{c}\sM\\ \oplus\\ \{0\}\end{array},
\]
it is seen that for every fixed $\lambda\in\cmr$ with $\RE \lambda<0$ one has
\[
\begin{array}{ll}
\sM+(\wt A-\lambda)^{-1}\sM
 &=\sM+\left\{ f\in \sK: \wh h=\{h,h'\}\in \wt A,\; f'=\lambda f  \right\}\\
 &=\sM+\left\{ f\in \sK: \sP_\sM \wh h\in \sP_\sM(\wt A)=\wh B,\; f'=\lambda f  \right\}\\
 &=\sM+(\wh B-\lambda)^{-1}\sM.
\end{array}
\]
This implies the equality \eqref{ABmin}, i.e., $\wt A$ and $\wh B=\sP_\sM(\wt A)$ are simultaneously $\sM$-minimal.
 \end{proof}

The main properties of the transformation $\sJ_\sM$ in \eqref{ghtjha1} are easier to describe. We state them for a slightly more general
transformation $\sJ_\sM^{(c)}$ involving a unimodular constant $|c|=1$:
\begin{equation}
\label{ghtjha1c}
\sJ_\sM^{(c)}:
 \left\{ \begin{bmatrix} \f \\ f \end{bmatrix},
        \begin{bmatrix} \f' \\ f' \end{bmatrix}
 \right\}
 {\mapsto}
 \left\{ \begin{bmatrix} -c\f'\\ f  \end{bmatrix},
         \begin{bmatrix}  c\f \\ f'\end{bmatrix}
 \right\},\quad   \f,\f'\in\sM,\; f,f'\in\sK,
 \end{equation}
where the choice $c=i$ gives the transformation $\sJ_\sM$ defined in \eqref{ghtjha1};
for simplicity the superscript $(c)$ is dropped in this case.

\begin{proposition}\label{ghblev1}
Let $\wt A$ be a linear relation in the Hilbert space $\sH=\sM\oplus\sK$ and let $\wh A=\sJ_\sM^{(c)}(\wt A)$, $|c|=1$, be defined by \eqref{ghtjha1c}.
Then:
\begin{enumerate}
\def\labelenumi{\rm (\roman{enumi})}
\item the transformation $\sJ_\sM^{(c)}$ satisfies the identity
\begin{equation}\label{adjointc}
 \sJ_\sM^{(c)}(\wt A^{*})=(\wh A)^{*},
\end{equation}
in particular, $\sJ_\sM^{(c)}$ preserves the classes of symmetric, selfadjoint, and (maximal) dissipative relations in $\sH$;

\item  the selfadjoint relations $\wt A$ and $\wh A$ are simultaneously $\sM$-minimal.
\end{enumerate}
\end{proposition}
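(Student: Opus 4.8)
The plan is to follow the pattern of the proof of Proposition~\ref{ghblev}, though the situation here is much lighter: by \eqref{ghtjha1c} the transform $\sJ_\sM^{(c)}$ acts on a pair $\big\{\begin{bmatrix}\f\cr f\end{bmatrix},\begin{bmatrix}\f'\cr f'\end{bmatrix}\big\}$ only by interchanging and rescaling the $\sM$-components $\f,\f'$, leaving the $\sK$-components $f,f'$ untouched, and it is plainly a linear bijection of $\sH^2$ onto itself (with inverse a transform of the same type).

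For (i) I would begin from the sesquilinear identity underlying the transform. Decomposing $h=\begin{bmatrix}\f\cr f\end{bmatrix}$, $h'=\begin{bmatrix}\f'\cr f'\end{bmatrix}$, $k=\begin{bmatrix}\psi\cr g\end{bmatrix}$, $k'=\begin{bmatrix}\psi'\cr g'\end{bmatrix}$ along $\sH=\sM\oplus\sK$ and setting $\{u,u'\}:=\sJ_\sM^{(c)}\{h,h'\}$, $\{v,v'\}:=\sJ_\sM^{(c)}\{k,k'\}$, a short computation using $|c|=1$ gives
\[
 (u',v)-(u,v')=(h',k)-(h,k').
\]
Since $\{k,k'\}\in\wt A^*$ means exactly $(h',k)-(h,k')=0$ for all $\{h,h'\}\in\wt A$, and $\{u,u'\}$ runs over $\wh A=\sJ_\sM^{(c)}(\wt A)$ as $\{h,h'\}$ runs over $\wt A$, this identity shows that $\{k,k'\}\in\wt A^*$ holds precisely when $\sJ_\sM^{(c)}\{k,k'\}\in(\wh A)^*$; since $\sJ_\sM^{(c)}$ is onto, this is exactly \eqref{adjointc}. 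Taking $\{h,h'\}=\{k,k'\}$ in the same identity yields $\IM(u',u)=\IM(h',h)$, so $\sJ_\sM^{(c)}$ — and, being of the same type, its inverse — maps dissipative relations to dissipative relations; since both preserve inclusions, maximality is preserved too, and together with \eqref{adjointc} this covers the symmetric and selfadjoint classes.

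For (ii) I would use that, by (i), $\wt A$ is selfadjoint if and only if $\wh A$ is, so that $\cmr\subset\rho(\wt A)\cap\rho(\wh A)$ and all resolvents in question are bounded and everywhere defined. Fix $\lambda\in\cmr$. Since $\sM$ is a subspace, $\sM+(\wt A-\lambda)^{-1}\sM=\sM+P_\sK\big((\wt A-\lambda)^{-1}\sM\big)$, and unwinding the definition of $(\wt A-\lambda)^{-1}$ identifies
\[
 P_\sK\big((\wt A-\lambda)^{-1}\sM\big)=\Big\{\,f\in\sK:\ \big\{\begin{bmatrix}\f\cr f\end{bmatrix},\begin{bmatrix}\f'\cr f'\end{bmatrix}\big\}\in\wt A\ \text{for some}\ \f,\f'\in\sM\ \text{with}\ f'=\lambda f\,\Big\},
\]
together with the analogous formula for $\wh A$. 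Because $\sJ_\sM^{(c)}$ and its inverse carry $\big\{\begin{bmatrix}\f\cr f\end{bmatrix},\begin{bmatrix}\f'\cr f'\end{bmatrix}\big\}$ to a pair having the very same $\sK$-components $f,f'$, these two sets coincide for every $\lambda\in\cmr$; hence $\sM+(\wt A-\lambda)^{-1}\sM=\sM+(\wh A-\lambda)^{-1}\sM$, and passing to closed linear spans over $\lambda\in\cmr$ yields the equality of the subspaces appearing in \eqref{amini}. In particular $\wt A$ is $\sM$-minimal if and only if $\wh A$ is.

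The one place calling for some care is the bookkeeping in (ii) — correctly identifying $(\wt A-\lambda)^{-1}\sM$ modulo $\sM$ with the set of those $\sK$-components $f$ of elements $\{[\f;f],[\f';f']\}\in\wt A$ for which $f'=\lambda f$. Once this is set up, the fact that $\sJ_\sM^{(c)}$ leaves the $\sK$-components fixed makes the remainder automatic, and I anticipate no genuine obstacle.
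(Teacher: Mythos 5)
Your proposal is correct and follows essentially the same route as the paper's proof: the same sesquilinear identity $(u',v)-(u,v')=(h',k)-(h,k')$ (using $|c|=1$) yields \eqref{adjointc} and the preservation of the symmetric, selfadjoint, and (maximal) dissipative classes, and part (ii) rests on the same observation that $\sJ_\sM^{(c)}$ leaves the $\sK$-components untouched, so that $\sM+(\wt A-\lambda)^{-1}\sM=\sM+(\wh A-\lambda)^{-1}\sM$ for each $\lambda\in\cmr$. The only differences are presentational (your explicit use of $P_\sK$ and of the inverse transform $\sJ_\sM^{(-\bar c)}$), so no further changes are needed.
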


\begin{proof}
(i) Again let $h=\begin{bmatrix} \f\cr f \end{bmatrix},\, h'=\begin{bmatrix} \f'\cr f'\end{bmatrix}\in\sH$
and $k=\begin{bmatrix} \psi\cr g \end{bmatrix},\, k'=\begin{bmatrix} \psi'\cr g'\end{bmatrix}\in\sH$
be decomposed according to $\sH=\sM\oplus\sK$ and let $\sJ_\sM^{(c)}\wh h$ and $\sJ_\sM^{(c)}\wh k$ be the transforms of $\wh h=\{h,h'\}$ and
$\wh k=\{k,k'\}$, respectively. Then
\[
\begin{array}{ll}
 (h',k)-(h,k') &=(\f',\psi)+(f',g)-(\f,\psi')-(f,g') \\
               &=\left(\begin{bmatrix} c\f \cr f' \end{bmatrix}, \begin{bmatrix} -c\psi' \cr g \end{bmatrix} \right)
  -\left(\begin{bmatrix} -c\f' \cr f \end{bmatrix},\begin{bmatrix} c\psi \cr g' \end{bmatrix} \right).
\end{array}
\]
Therefore, with $\wh h\in\wt A$ and $\sJ_\sM^{(c)}\wh h\in\wh A$ one concludes that
\[
 \wh k\in \wt A^* \quad\Longleftrightarrow\quad
 \sJ_\sM^{(c)}\wh k=\left\{\begin{bmatrix} -c\psi' \cr g \end{bmatrix},\begin{bmatrix} c\psi \cr g' \end{bmatrix} \right\} \in (\wh A)^*.
\]
This proves \eqref{adjointc} and, in particular, one has
\[
 \wt A\subset (\wt A)^* \quad\Longleftrightarrow\quad \wh A\subset  (\wh A)^*,\quad
 \wt A=(\wt A)^* \quad\Longleftrightarrow\quad \wh A=(\wh A)^*.
\]
Moreover, the identity
\[
 \IM \left(h',h \right) = \IM \left(\begin{bmatrix} c\f \cr f' \end{bmatrix}, \begin{bmatrix} -c\f' \cr f \end{bmatrix} \right)
\]
shows that $\wt A$ is (maximal) dissipative precisely when $\wh A$ is (maximal) dissipative.

(ii) Let $\wh h\in \wt A$ and decompose $h,h'\in\sH$ as in the proof of (i).
Then
\[
 \left\{\begin{bmatrix} \f'-\lambda \f \cr f'-\lambda f\end{bmatrix},\begin{bmatrix} \f\cr f\end{bmatrix} \right\} \in (\wt A-\lambda)^{-1}
\quad\Longleftrightarrow\quad
 \left\{\begin{bmatrix} c(\f+\lambda \f') \cr f'-\lambda f\end{bmatrix},\begin{bmatrix} -c\f'\cr f\end{bmatrix} \right\} \in (\wh A-\lambda)^{-1}.
\]
Since
\[
 \begin{bmatrix} \f'-\lambda \f \cr f'-\lambda f\end{bmatrix} \in \begin{array}{c}\sM\\ \oplus\\ \{0\}\end{array}
\quad\Longleftrightarrow\quad   f'=\lambda f \quad\Longleftrightarrow\quad
 \begin{bmatrix} c(\f+\lambda \f') \cr f'-\lambda f\end{bmatrix} \in \begin{array}{c}\sM\\ \oplus\\ \{0\}\end{array},
\]
it is seen that for every fixed $\lambda\in\cmr$,
\[
\begin{array}{ll}
\sM+(\wt A-\lambda)^{-1}\sM
 &=\sM+\left\{ f\in \sK: \wh h\in \wt A,\; f'=\lambda f  \right\}\\
 &=\sM+\left\{ f\in \sK: \sJ_\sM^{(c)} \wh h\in \wh A,\; f'=\lambda f  \right\}\\
 &=\sM+(\wh A-\lambda)^{-1}\sM.
\end{array}
\]
Hence, $\wt A$ and $\wh A$ are simultaneously $\sM$-minimal.
 \end{proof}

\begin{remark}
\label{relvsop}
If $\wt A$ is a nonnegative selfadjoint operator in $\sH=\sM\oplus\sK$ and $\ker \wt A\cap\sM=\{0\}$,
then $\wh A=\sJ_\sM(\wt A)$ and $\wh B=\sP_\sM(\wt A)$ are operators as well and, moreover,
\[
\begin{array}{l}
\dom \wh A=(-iP_\sM \wt A+P_\sK)\dom \wt A,\; \wh A(-iP_\sM\wt Af+P_\sK f)=iP_\sM f+ P_\sK\wt A f,\;f\in\dom \wt A,\\[3mm]
\dom \wh B=(P_\sM \wt A+P_\sK)\dom \wt A,\; \wh B(P_\sM\wt Af+P_\sK f)=P_\sM f+ P_\sK\wt A f,\;f\in\dom \wt A.
\end{array}
\]
\end{remark}

\subsection{Compressed resolvents, Nevanlinna families, and the Schur class}
As indicated in \eqref{opexpr1}, \eqref{opexpr1B} the resolvents and compressed resolvents
are closely related with Nevanlinna families in $\sM$. Some further insight in this connection can be obtained by
connecting selfadjoint relations $\wt A$ with unitary operators $U$ and Nevanlinna families in $\sM$
with operator valued functions from the Schur class $\bS(\sM)$.
In this subsection some basic connections between these objects are recalled and then augmented with some formulas
that will be needed in later sections.

\subsubsection{Cayley transforms}
The basic connection between selfadjoint relations $\wt A$ and unitary operators $U$ is obtained by the direct/inverse Cayley transform:
\[
 \begin{array}{l}
  \wt A\mapsto U=\cC(\wt A):=\left\{\left\{(f'+if,f'-if\right\}:\{f,f'\}\in \wt A\right\}=I-2i(\wt A+iI)^{-1},\\
  U\mapsto \wt A=\cC^{-1}(U):=\left\{\left\{(I-U)g,i(I+U)g\right\}:g\in \sH\right\} =-iI+2i(I-U)^{-1}.
 \end{array}
\]
These formulas establish a one-to-one correspondences between unitary operators $U$ and selfadjoint relations
$\wt A$ in a Hilbert space $\sH$ with $\mul \wt A=\ker(I-U)$.
The resolvents of $\wt A$ and $U$ are connected by the relations
\begin{equation} \label{theres}
\left\{\begin{array}{l} \left(\wt A-\lambda
I\right)^{-1}=-\cfrac{1}{\lambda+i}\, I-\cfrac{2i}{\lambda^2+1}\left(I-\cfrac{\lambda+i}{\lambda-i}\,
U\right)^{-1},\;\lambda\in\rho(\wt A),\;\lambda\ne \pm i,\\[4mm]
\left(\wt A+iI\right)^{-1}=\frac{1}{2i}(I-U), \quad \left(\wt A-iI\right)^{-1}=\frac{1}{2i}(U^{-1}-I),
\end{array}\right. 
\end{equation}
and
\[
(I-z U)^{-1}=\cfrac{1}{1-z}\, I-\cfrac{2i z}{(1-z)^2}\left(\wt A+i\cfrac{1+z}{1-z}\, I\right)^{-1},\;z^{-1}\in\rho(U)\;\mbox{or}\; z=0.
\]

\subsubsection{Connection between the Nevanlinna families and the Schur class}
A relationship between the class $\wt R(\sM)$ of all Nevanlinna families in $\sM$
and the Schur class $\bS(\sM)$ can be given by the linear fractional transformations of functions
and their independent variables, cf. \cite{BHS2008OPu, BHS2009CAOT}:
\begin{equation}\label{dlp1}
\wt R(\sM)\ni\cM\mapsto\Psi(z):=
I+2i\left(\cM\left(i\frac{z+1}{z-1}\right)-iI\right)^{-1}\in\bS(\sM),
\end{equation}
\begin{multline}\label{connectt}
\bS(\sM)\ni\Psi\mapsto \cM(\lambda) \\
=\left\{\begin{array}{l}
\left\{\left\{\left(I-\Psi\left(\cfrac{\lambda+i}{\lambda-i}\right)\right)h,\;
-i\left(I+\Psi\left(\cfrac{\lambda+i}{\lambda-i}\right)\right)h\right\},\;h\in\sM\right\},\;
\IM\lambda<0,\\
\left\{\left\{\left(I-\Psi^*\left(\cfrac{\bar\lambda+i}{\bar\lambda-i}\right)\right)h,\;
i\left(I+\Psi^*\left(\cfrac{\bar\lambda+i}{\bar\lambda-i}\right)\right)h\right\},\;h\in\sM\right\},\;
\IM\lambda>0
\end{array}\right. \\
\quad\in \wt R(\sM).
\end{multline}

\subsubsection{Connections with compressed resolvents}
Let the selfadjoint relation $\wt A$ in $\sH$ and the unitary operator $U$ be connected by the Cayley transform $U=\cC(\wt A)$.
Let $\sM$ is a subspace of $\sH$ and decompose $\sH=\sM\oplus\sK$. Then the connection between the resolvents stated in \eqref{theres}
leads to useful connections between the compressed resolvents and the classes $\wt R(\sM)$ and $\bS(\sM)$.

It follows from the Schur-Frobenius block formula \eqref{Sh-Fr1} for the resolvent $(I-zU)^{-1}$ that
\begin{equation}\label{sh-fros}
P_\sM(I_\sH-z U)^{-1}\uphar\sM=(I_\sM-z\Psi(z))^{-1},\; z\in\dD.
\end{equation}
On the other hand, using the formulas \eqref{theres} and \eqref{dlp1} one gets
\begin{equation}\label{sa-fros}
 P_\sM(\wt A-\lambda I)^{-1}\uphar\sM=-(\cM(\lambda)+\lambda I)^{-1},\; \lambda\in\cmr,
\end{equation}
where $\cM\in\wt R(\sM)$ and $\Psi\in\bS(\sM)$ are connected by \eqref{dlp1}, \eqref{connectt}.

The connection between the resolvents of $\wt A$ and $U=\cC(\wt A)$ in \eqref{theres} implies
that there is also a direct connection between the minimality of $\wt A$ and $U$:
\begin{equation}\label{AUmin}
 \sH=\cspan\left\{\sM+\left(\wt A-\lambda I\right)^{-1}\sM:\lambda\in \dC\setminus\dR\right\}
    =\cspan\left\{(I-\xi U)^{-1}\sM:|\xi|\ne 1\right\},
\end{equation}
i.e., $\wt A$ and $U$ are simultaneously $\sM$-minimal. Since $U$ is unitary, \eqref{AUmin} is equivalent to
\begin{equation}\label{Umin}
\cspan\left\{U^n\sM: n\in\dZ\right\}=\sH.
\end{equation}
If, in addition, $U$ is represented as a $2\times 2$ block operator
 \[
U=\begin{bmatrix} D&C \cr B&F\end{bmatrix}:\begin{array}{l} \sM \\\oplus\\ \sK \end{array} \to
\begin{array}{l} \sM \\\oplus\\ \sK
\end{array},
\]
then the condition \eqref{Umin} can be rewritten equivalently, cf. \cite[Proposition 7.4.]{ArlMFAT2009},
 in one of the following forms:
\begin{multline}\label{smplty}
\cspan\left\{U^n\sM: n\in\dZ\right\}=\sH\Longleftrightarrow\cspan\{F^{n}B\sM; F^{*n}C^*\sM:\,n\in\dN_0\}=\sH\\
\Longleftrightarrow\left(\bigcap\limits_{n=0}^\infty\ker(B^*F^{*n})\right)\cap
\left(\bigcap\limits_{n=0}^\infty\ker(CF^{n})\right)=\{0\}\\
 \iff
\left(\bigcap\limits_{z\in\cU}\ker(B^*(I_\sK -z F^*)^{-1})\right)\cap
\left(\bigcap\limits_{z\in\cU}\ker(C(I_\sK -z F)^{-1})\right)=\{0\},\\
\end{multline}
where $\cU$ can be taken to be some neighborhood of the origin. The first condition involving the block entries of $U$
is often used as definition for $U$ to be simple conservative realization of the function $\Psi(\cdot)\in \bS(\sM)$;
see Appendix \ref{AppendA}.

The next result gives representations for the functions $\cM(\lambda)\in \wt R(\sM)$, $-\cM(\lambda)^{-1}\in \wt R(\sM)$,
and $-\cM(1/\lambda)\in \wt R(\sM)$ as compressed resolvents of certain selfadjoint relations.

\begin{theorem}
\label{inverse1}
Let $\cM(\cdot)$ be a Nevanlinna family in the
Hilbert space $\sM$. Then, up to unitary
equivalence, there exists a unique selfadjoint relation $\wt A$ in the Hilbert space $\sM\oplus\sK$ which is $\sM$-minimal and such that
\begin{enumerate}
  \item the Nevanlinna family $\cM(\lambda)$ has the representation
\begin{equation}
\label{opexpr}
\cM(\lambda)=-\left(P_\sM\left(\wt A-\lambda I\right)^{-1}\uphar\sM\right)^{-1}-\lambda I_\sM ,\quad \lambda\in \cmr.
\end{equation}

  \item  Moreover, if $\wh A=\sJ_\sM(\wt A)$ is as defined in \eqref{ghtjha1}, then
\begin{equation}
\label{opexpr11}
 -\cM^{-1}(\lambda)=-\left(P_\sM\left(\wh A-\lambda I\right)^{-1}\uphar\sM\right)^{-1}-\lambda I_\sM ,\quad \lambda\in\cmr,
\end{equation}

  \item and if $\breve{A} =-\sJ_\sK(\wt A)$ is as defined in \eqref{ghtjha1}, then
\begin{equation}
\label{opexpr112}
 -\cM\left(\cfrac{1}{\lambda}\right)=-\left(P_\sM\left(\breve{A}-\lambda I\right)^{-1}\uphar\sM\right)^{-1}-\lambda I_\sM ,\quad \lambda \in\cmr.
\end{equation}
\end{enumerate}

\end{theorem}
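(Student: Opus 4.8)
The plan is to prove the three identities in the stated order. Item~(1) --- the existence, unique up to unitary equivalence, of an $\sM$-minimal selfadjoint relation $\wt A$ in some $\sM\oplus\sK$ with \eqref{opexpr} --- is a reformulation of \cite[Theorem~3.9]{DHMS06}; it can also be obtained from the present section by letting $\Psi\in\bS(\sM)$ be the Schur function attached to $\cM$ through \eqref{dlp1}, \eqref{connectt}, taking $U$ to be the (essentially unique) simple conservative realization of $\Psi$ in $\sM\oplus\sK$ as in Appendix~\ref{AppendA}, and setting $\wt A=\cC^{-1}(U)$: then $\wt A$ is selfadjoint, it is $\sM$-minimal by \eqref{AUmin}--\eqref{smplty}, identity \eqref{opexpr} is exactly \eqref{sa-fros}, and uniqueness of $\wt A$ follows from uniqueness of $U$. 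I take this $\wt A$ as given.

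For item~(2) the strategy is to show that $\wh A=\sJ_\sM(\wt A)$ has precisely the properties which, by item~(1), characterize the realization of $-\cM^{-1}$. By Proposition~\ref{ghblev1}(i) the relation $\wh A$ is selfadjoint, and by Proposition~\ref{ghblev1}(ii) it is $\sM$-minimal, so it remains only to verify
\[
 P_\sM(\wh A-\lambda I)^{-1}\uphar\sM=-\left(-\cM^{-1}(\lambda)+\lambda I\right)^{-1},\qquad \lambda\in\cmr,
\]
which simultaneously yields $-\cM^{-1}\in\wt R(\sM)$. To this end I would start from the equivalence
\[
 \left\{\begin{bmatrix}\f'-\lambda\f\\f'-\lambda f\end{bmatrix},\begin{bmatrix}\f\\f\end{bmatrix}\right\}\in(\wt A-\lambda)^{-1}
 \iff
 \left\{\begin{bmatrix}i(\f+\lambda\f')\\f'-\lambda f\end{bmatrix},\begin{bmatrix}-i\f'\\f\end{bmatrix}\right\}\in(\wh A-\lambda)^{-1}
\]
from the proof of Proposition~\ref{ghblev1}(ii) and the fact that both first components lie in $\sM\oplus\{0\}$ exactly when $f'=\lambda f$. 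Writing $\eta:=\f'-\lambda\f\in\sM$ and $R(\lambda):=P_\sM(\wt A-\lambda I)^{-1}\uphar\sM$, the $\wt A$-side gives $\f=R(\lambda)\eta$ and $\f'=(I+\lambda R(\lambda))\eta$, while the $\wh A$-side says that the input $\psi=i\bigl((1+\lambda^2)R(\lambda)+\lambda I\bigr)\eta$ is sent to $-i\f'=-i(I+\lambda R(\lambda))\eta$. As $\psi$ runs through all of $\sM$ (because $(\wh A-\lambda I)^{-1}$ is everywhere defined on $\sM\oplus\sK$ for $\lambda\in\cmr$), this means $P_\sM(\wh A-\lambda I)^{-1}\uphar\sM=-(I+\lambda R(\lambda))\bigl((1+\lambda^2)R(\lambda)+\lambda I\bigr)^{-1}$ as a product of linear relations. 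Inserting $R(\lambda)=-(\cM(\lambda)+\lambda I)^{-1}$ from \eqref{sa-fros} and simplifying with the usual relation arithmetic collapses the right-hand side to $-\cM(\lambda)(\lambda\cM(\lambda)-I)^{-1}=-(-\cM^{-1}(\lambda)+\lambda I)^{-1}$, which is \eqref{opexpr11}.

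For item~(3) I would run the same scheme for the transform $\sJ_\sK$ (namely formula \eqref{ghtjha1} with the roles of $\sM$ and $\sK$ interchanged) followed by the sign change. With $\wh A':=\sJ_\sK(\wt A)$ --- which is selfadjoint by the $\sM\leftrightarrow\sK$ version of Proposition~\ref{ghblev1}(i) --- the corresponding version of the identity in the proof of Proposition~\ref{ghblev1}(ii) shows that forcing the input into $\sM\oplus\{0\}$ now requires $f+\nu f'=0$ on the $\wh A'$-side, matching the shift $\mu=-1/\nu$ on the $\wt A$-side; with $T(\nu):=P_\sM(\wt A+\tfrac1\nu I)^{-1}\uphar\sM$ one reads off $P_\sM(\wh A'-\nu I)^{-1}\uphar\sM=T(\nu)\bigl(I-(\nu+\tfrac1\nu)T(\nu)\bigr)^{-1}$. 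Then $\breve A=-\wh A'$ and $\nu=-\lambda$ give $P_\sM(\breve A-\lambda I)^{-1}\uphar\sM=-T(-\lambda)\bigl(I+(\lambda+\tfrac1\lambda)T(-\lambda)\bigr)^{-1}$ with $T(-\lambda)=P_\sM(\wt A-\tfrac1\lambda I)^{-1}\uphar\sM=-(\cM(\tfrac1\lambda)+\tfrac1\lambda I)^{-1}$ by \eqref{sa-fros} (note $\tfrac1\lambda\in\cmr$), and a short simplification turns this into $(\cM(\tfrac1\lambda)-\lambda I)^{-1}=-(-\cM(\tfrac1\lambda)+\lambda I)^{-1}$, i.e. \eqref{opexpr112}.

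The step I expect to cost the most effort --- essentially the only genuine obstacle --- is the $\sM$-minimality of $\breve A$: Proposition~\ref{ghblev1}(ii) only tells us that $\wt A$ and $\sJ_\sK(\wt A)$ are simultaneously $\sK$-minimal, not $\sM$-minimal. Here I would argue directly, using $\cspan(\sM\cup X)=\sM\oplus\cspan(P_\sK X)$ to reduce $\sM$-minimality of a selfadjoint relation $\wt C$ in $\sM\oplus\sK$ to the condition $\cspan\{P_\sK(\wt C-\lambda I)^{-1}\sM:\lambda\in\cmr\}=\sK$; the formulas above show $P_\sK(\breve A-\lambda I)^{-1}\sM$ equals $P_\sK(\wt A-\tfrac1\lambda I)^{-1}\sM$ up to a nonzero scalar factor, and since $\lambda\mapsto1/\lambda$ is a bijection of $\cmr$ the associated closed linear spans coincide, so $\breve A$ is $\sM$-minimal because $\wt A$ is. All remaining steps are bounded-operator/linear-relation manipulations, with the invertibility of the operators being inverted automatic because the compressed resolvents of the selfadjoint relations $\wt A$, $\wh A$, $\wh A'$ are everywhere defined on $\sM$ for $\lambda\in\cmr$.
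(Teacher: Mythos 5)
Your proposal is correct, but for items (2) and (3) it follows a genuinely different route from the paper. The paper proves all three parts inside the Schur-class/system framework: it passes from $\cM$ to $\Psi\in\bS(\sM)$ via \eqref{dlp1}, realizes $\Psi$ by a simple conservative colligation $U$, and obtains \eqref{opexpr11} and \eqref{opexpr112} by flipping signs of block rows of $U$ (giving the transfer functions $-\Psi(z)$ and $\Psi(-z)$, whose Nevanlinna transforms are $-\cM^{-1}(\lambda)$ and $\cM(-1/\lambda)$), with $\sM$-minimality of all three realizations coming for free from the simplicity criterion \eqref{smplty}. You instead work directly at the level of graphs and compressed resolvents: from the equivalence in the proof of Proposition \ref{ghblev1}(ii) you parametrize the elements of $(\wh A-\lambda)^{-1}$, resp.\ $(\sJ_\sK(\wt A)-\nu)^{-1}$, with input in $\sM\oplus\{0\}$, and your relation-arithmetic simplifications are exact (writing $\{v,w\}\in\cM(\lambda)$ and $\eta=w+\lambda v$ one checks that $-(I+\lambda R)((1+\lambda^2)R+\lambda)^{-1}=\{\{\lambda w-v,-w\}\}=-(-\cM^{-1}(\lambda)+\lambda)^{-1}$, and similarly for the third formula), so \eqref{opexpr11} and \eqref{opexpr112} follow by the same inversion that links \eqref{opexpr1} and \eqref{opexpr1B}. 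You also correctly identify the one place where Proposition \ref{ghblev1} does not help, namely the $\sM$-minimality of $\breve A=-\sJ_\sK(\wt A)$ (the proposition only gives $\sK$-minimality for $\sJ_\sK$), and your fix is sound: $\cspan\{\sM+(\wt C-\lambda)^{-1}\sM\}=\sM\oplus\cspan\{P_\sK(\wt C-\lambda)^{-1}\sM\}$, and your parametrization shows $P_\sK(\breve A-\lambda)^{-1}\sM$ coincides with $P_\sK(\wt A-\tfrac1\lambda)^{-1}\sM$ up to the factor $i/\lambda$, so $\sM$-minimality transfers since $\lambda\mapsto 1/\lambda$ is a bijection of $\cmr$. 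What the two approaches buy: the paper's route keeps the argument uniform with the machinery (conservative systems, transfer functions) reused later in Section \ref{sec3} and gets all minimality statements from one criterion; your route is more elementary and self-contained, makes the resolvent relations between $\wt A$, $\wh A$, $\breve A$ completely explicit, but requires the bespoke minimality argument for $\breve A$, which you supply.
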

\begin{proof}
As indicated the statement (1) is known, see \cite{DHMS06} (cf. \eqref{opexpr1B}), and an alternative proof
via Cayley transforms is contained in \cite{BHS2008OPu, BHS2009CAOT}.
For later purposes it is convenient to derive the stated representations simultaneously
by connecting each of the functions in (1) -- (3) via Cayley transforms to functions from the Schur class $\bS(\sM)$.

(1) Let $\cM(\lambda)\in \wt R(\sM)$ and define
\[
\Psi(z)=I+2i\left(\cM\left(i\frac{z+1}{z-1}\right)-iI\right)^{-1},\;
z\in\dD.
\]
Then $\Psi$ is an operator valued function that belongs to the Schur class $\bS(\sM)$.
Hence one can represent $\Psi(z)$ as the transfer function of a unique (up to unitary similarity) simple conservative system
\[
\tau=\left\{\begin{bmatrix} U_{11}&U_{12} \cr
 U_{21}&U_{22}\end{bmatrix};\sM,\sM,\sK\right\}
\]
with a state space $\sK$; cf. Appendix \ref{AppendA}. Thus,
\[
 \Psi(z)=U_{11}+zU_{12}(I-zU_{22})^{-1}U_{21}, \quad |z|<1,
\]
where
\[
U=\begin{bmatrix} U_{11}&U_{12} \cr
U_{21}&U_{22}\end{bmatrix}:\begin{array}{l}\sM\\\oplus\\\sK\end{array}\to \begin{array}{l}\sM\\\oplus\\\sK\end{array}
\]
is a unitary operator.
Then the inverse Cayley transform of $U$ given by
\[
\wt A=\left\{\left\{(I-U)g,i(I+U)g\right\}:
g\in\sM\oplus\sK\right\}
\]
is selfadjoint. Using the equivalence of \eqref{sh-fros} and \eqref{sa-fros} the formula \eqref{opexpr} follows; cf. \eqref{opexpr1B}.
The uniqueness property of $\wt A$ holds by the $\sM$-minimality of $\wt A$; see \eqref{AUmin}.

(2) Here the following modification of the unitary block operator $U$ and the system $\tau$ from the proof of (1) are introduced:
\[
\tau'=\left\{U'=\begin{bmatrix} -U_{11}&-U_{12} \cr
 U_{21}&U_{22}\end{bmatrix};\sM,\sM,\sK\right\}.
\]
Clearly, $U'$ is a unitary operator and hence $\tau'$ is a conservative system.
Moreover, it is seen from \eqref{smplty} that $\tau'$ is simple precisely when $\tau$ is simple.
The inverse Cayley transform of $U'$,
\[
 \wh A=\sJ_\sM(\wt A)=\left\{\left\{(I-U')g,i(I+U')g\right\}:\, g\in\sM\oplus\sK\right\},
\]
is selfadjoint and (together with $\wt A$) also $\sM$-minimal; see Proposition \ref{ghblev1}.
Moreover, the transfer function $\Psi_{\tau'}(z)$ is given by $\Psi_{\tau'}(z)=-\Psi(z)$.
By applying the formula \eqref{connectt} to the function $\Psi_{\tau'}(z)$ one obtains for
its transform $\cM'(\lambda)$ the representation
\[
\cM'(\lambda)
 =\left\{\begin{array}{l}
\left\{\left\{\left(I-\Psi_{\tau'}\left(\cfrac{\lambda+i}{\lambda-i}\right)\right)h,\;
-i\left(I+\Psi_{\tau'}\left(\cfrac{\lambda+i}{\lambda-i}\right)\right)h\right\},\;h\in\sM\right\},\;
\IM\lambda<0,\\
\left\{\left\{\left(I-\Psi^*_{\tau'}\left(\cfrac{\bar\lambda+i}{\bar\lambda-i}\right)\right)h,\;
i\left(I+\Psi^*_{\tau'}\left(\cfrac{\bar\lambda+i}{\bar\lambda-i}\right)\right)h\right\},\;h\in\sM\right\},\;
\IM\lambda>0.
\end{array}\right.
\]
Now substitute $\Psi_{\tau'}(z)=-\Psi(z)$ and compare the resulting formula with \eqref{connectt} to see that $\cM'(\lambda)=-\cM(\lambda)^{-1}$.
It remains to replace $\wt A$ by $\wh A$ and $\cM(\lambda)$ by $-\cM(\lambda)^{-1}$ in the representation \eqref{opexpr}
to get \eqref{opexpr11}.

(3) Arguing as above introduce the conservative system
\[
\tau''=\left\{U''=\begin{bmatrix} U_{11}&U_{12} \cr
-U_{21}&-U_{22}\end{bmatrix};\sM,\sM,\sK\right\}
\]
and its selfadjoint transform
\[
\sJ_\sK(\wt A)=\left\{\left\{(I-U'')g,i(I+U'')g\right\}:
g\in\sM\oplus\sK\right\}
\]
which is $\sM$-minimal since $\tau''$ (together with $\tau$) is simple, or equivalently, $\wt A$ is $\sM$-minimal.
The corresponding transfer function $\Psi_{\tau''}(z)$ is given by $\Psi_{\tau''}(z)=\Psi(-z)$, whose
transform $\cM''(z)$ has the expression
\[
\cM''(\lambda)=\left\{\begin{array}{l}
 \left\{\left\{\left(I-\Psi_{\tau''}\left(\cfrac{\lambda+i}{\lambda-i}\right)\right)h,\;
-i\left(I+\Psi_{\tau''}\left(\cfrac{\lambda+i}{\lambda-i}\right)\right)h\right\},\;h\in\sM\right\},\;
\IM\lambda<0,\\
\left\{\left\{\left(I-\Psi^*_{\tau''}\left(\cfrac{\bar\lambda+i}{\bar\lambda-i}\right)\right)h,\;
i\left(I+\Psi^*_{\tau''}\left(\cfrac{\bar\lambda+i}{\bar\lambda-i}\right)\right)h\right\},\;h\in\sM\right\},\;
\IM\lambda>0.
\end{array}\right.
\]
This means that $\cM''(\lambda)=\cM\left(-\cfrac{1}{\lambda}\right)$.
Finally, by replacing $\wt A$ by $\breve{A} =-\sJ_\sK(\wt A)$ and $\cM(\lambda)$ by $-\cM''(-\lambda)$ in
\eqref{opexpr} leads to \eqref{opexpr112}. This completes the proof.
\end{proof}

\section{Representations of Stieltjes and inverse Stieltjes families}\label{sec3}

In Theorem \ref{inverse1} expressions for an arbitrary Nevanlinna family $\cM(\lambda)$ and its transforms $-\cM(\lambda)^{-1}$ and
$-\cM\left(\cfrac{1}{\lambda}\right)$ were given. In this section we assume in addition that $\cM(\lambda)$ is a Stieltjes or an inverse Stieltjes family and construct various representations that take into account the additional properties of $\cM(\lambda)$ implied by these further assumptions.

\subsection{Stieltjes/inverse Stieltjes families and the combined Nevanlinna-Schur class}

It this subsection the classes $\wt\cS(\sM)$ and $\wt\cS^{(-1)}(\sM)$ of Stieltjes and inverse Stieltjes families are connected to
a class of functions that has been studied recently in \cite{ArlHassi_2018}. The definition reads as follows.

\begin{definition}\label{rsm}
Let $\sM$ be a Hilbert space. A $\bB(\sM)$-valued Nevanlinna
function $\Omega$ which is holomorphic on
$\dC\setminus\{(-\infty,-1]\cup[1,+\infty)\}$ is said to belong to
the class $\cRS(\sM)$ if
\[
 -I\le \Omega(x)\le I, \quad x\in (-1,1).
\]
\end{definition}

It has been proved in \cite{ArlHassi_2018} that the class $\cRS(\sM)$ is a subclass of Schur functions $\bS(\sM)$. This means that
the class $\cRS(\sM)$ consists of function that are Nevanlinna functions in $\cmr$ and simultaneously Schur functions on the open unit disk.
This class is called a \emph{combined Nevanlinna-Schur class of $\bB(\sM)$-valued operator functions} and explains the notation $\cRS(\sM)$; $\cR$ standing for $R$-functions (Nevanlinna functions) and $\cS$ for Schur functions.
Some further characterizations for $\cRS(\sM)$ can be found in \cite[Theorem~4.1]{ArlHassi_2018}; see also Appendix \ref{AppendA}.

The next lemma connects the classes $\wt\cS(\sM)$ and $\wt\cS^{(-1)}(\sM)$ to the class $\cRS(\sM)$.
It will be used for some further analysis of Stieljes and inverse Stieljes families
and it offers a tool for establishing some compressed resolvent formulas for these classes.

\begin{lemma}\label{TH1}
Let $\Omega\in\cRS(\sM)$. Then for all $\lambda\in \dC\setminus\dR_+$,
\begin{equation}
\label{formula1}
\begin{array}{rl}
\cQ(\lambda)&=-I+2\left(I_\sM-\Omega\left(\cfrac{1+\lambda}{1-\lambda}\right)\right)^{-1}\\
            &=\left\{\left\{\left(I_\sM-\Omega\left(\cfrac{1+\lambda}{1-\lambda}\right)\right)h,
    \left(I_\sM+\Omega\left(\cfrac{1+\lambda}{1-\lambda}\right)\right)h\right\}:\; h\in\sM\right\}
\end{array}
\end{equation}
is a Stieltjes family and
\begin{equation}
\label{formula2}
\begin{array}{rl}
\cR(\lambda)&=I-2\left(I_\sM+\Omega\left(\cfrac{1+\lambda}{1-\lambda}\right)\right)^{-1}\\
            &=\left\{\left\{\left(I_\sM+\Omega\left(\cfrac{1+\lambda}{1-\lambda}\right)\right)h,
    \left(\Omega\left(\cfrac{1+\lambda}{1-\lambda}\right)-I_\sM\right)h\right\}:\; h\in\sM\right\}
\end{array}
\end{equation}
is an inverse Stieltjes family.

Conversely, if $\cQ(\lambda)$ is the Stieltjes family (resp. $\cR(\lambda)$ is an inverse Stieltjes family) in $\sM$,
then there exists a function $\Omega\in\cRS(\sM)$ such that
\eqref{formula1} (resp. \eqref{formula2}) holds.

Furthermore, the functions $\cQ$ in \eqref{formula1} and $\cR$ in \eqref{formula2} are connected by $\cR=-\cQ^{-1}$
and thus $\cQ\in\wt\cS(\sM)$ if and only if $-\cQ^{-1}\in\wt\cS^{(-1)}(\sM)$.
\end{lemma}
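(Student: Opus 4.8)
The plan is to establish the three claims — the forward construction, the converse, and the identity $\cR=-\cQ^{-1}$ — by direct computations with the \emph{linear-relation} forms appearing on the second lines of \eqref{formula1} and \eqref{formula2}, using throughout the M\"obius substitution $\lambda\mapsto w:=\frac{1+\lambda}{1-\lambda}$, with inverse $w\mapsto\lambda=\frac{w-1}{w+1}$. I would first record the elementary facts about this map: it is a bijection of $\dC\setminus\dR_+$ onto $\dC\setminus\{(-\infty,-1]\cup[1,+\infty)\}$ (the domain of $\Omega$), it sends $(-\infty,0)$ onto $(-1,1)$, and $\IM w=2\IM\lambda/|1-\lambda|^2$ has the same sign as $\IM\lambda$; in particular $\Omega(w)$ is defined for every $\lambda\in\dC\setminus\dR_+$. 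For the forward direction put $T:=\Omega(w)\in\bB(\sM)$, so that $\cQ(\lambda)=\{\{(I-T)h,(I+T)h\}:h\in\sM\}$. Then $\cQ(\lambda)+\mu=\{\{(I-T)h,((1+\mu)I+(1-\mu)T)h\}\}$ for any $\mu$, and with $\mu=i$, since $(1+i)I+(1-i)T=(1-i)(T+iI)$, one obtains $(\cQ(\lambda)+i)^{-1}=\frac1{1-i}(I-T)(T+iI)^{-1}$ whenever $-i\notin\sigma(T)$. As $\Omega$ is a Nevanlinna function and $\IM w>0$ for $\lambda\in\dC_+$, the operator $T$ is bounded dissipative, so $\sigma(T)\subset\overline{\dC_+}$ and this resolvent is a well-defined element of $\bB(\sM)$ holomorphic in $\lambda\in\dC_+$; this gives Definition~\ref{Nevanlinna}(iii) with $\mu=i$. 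From $\IM((I+T)h,(I-T)h)=2\IM(Th,h)\ge0$ the relation $\cQ(\lambda)$ is dissipative, and since $-i\in\dC_-\cap\rho(\cQ(\lambda))$ it is maximal dissipative; the $\dC_-$ case is symmetric, and $\cQ(\lambda)^*=\cQ(\bar\lambda)$ follows from $\Omega(\bar w)=\Omega(w)^*$ together with maximality of both sides. Hence $\cQ\in\wt R(\sM)$.

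For $x<0$ one has $w\in(-1,1)$, so $S:=\Omega(w)$ is selfadjoint with $-I\le S\le I$, and $((I+S)h,(I-S)h)=((I-S^2)h,h)\ge0$ shows $\cQ(x)\ge0$; moreover $\frac1{1-i}(I-T)(T+iI)^{-1}$ stays holomorphic across $\lambda\in(-\infty,0)$ because there $T=\Omega(w)$ is selfadjoint, so $-i\notin\sigma(T)$. Therefore $\cQ\in\wt\cS(\sM)$. Swapping the two entries of $\cQ(\lambda)$ and negating gives $-\cQ(\lambda)^{-1}=\{\{(I+T)h,(T-I)h\}\}=\cR(\lambda)$, so \eqref{formula2} is exactly $\cR=-\cQ^{-1}$ and $\cR\in\wt\cS^{(-1)}(\sM)$ by property~(b) of the Introduction (equivalently, by the direct check $((T-I)h,(I+T)h)=((T^2-I)h,h)\le0$ for $x<0$). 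Applying property~(b) once more to $\cR=-\cQ^{-1}$ returns $-\cR^{-1}=\cQ$, which yields the stated equivalence $\cQ\in\wt\cS(\sM)\iff-\cQ^{-1}\in\wt\cS^{(-1)}(\sM)$.

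For the converse, given $\cQ\in\wt\cS(\sM)$ I would invert the construction, defining
\[
 \Omega(z):=I_\sM-2\left(\cQ\!\left(\tfrac{z-1}{z+1}\right)+I_\sM\right)^{-1},\qquad z\in\dC\setminus\{(-\infty,-1]\cup[1,+\infty)\},
\]
and then proving $\Omega\in\cRS(\sM)$; retracing the algebra of the forward direction recovers \eqref{formula1}, and applying the result to the inverse Stieltjes family $\cR=-\cQ^{-1}$ recovers \eqref{formula2}. Writing $\lambda=\frac{z-1}{z+1}$: the operator $-(\cQ(\lambda)+I_\sM)^{-1}$ (where defined) is bounded dissipative for $\IM\lambda>0$, hence $\IM\Omega(z)\ge0$ for $\IM z>0$, so $\Omega$ is Nevanlinna; and for $x\in(-1,1)$, i.e.\ $\lambda\in(-\infty,0)$ with $\cQ(\lambda)\ge0$ selfadjoint, one has $0\le(\cQ(\lambda)+I_\sM)^{-1}\le I_\sM$, whence $-I_\sM\le\Omega(x)\le I_\sM$.

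The one genuinely nontrivial point — which I expect to be the crux, rather than any of the algebra — is to show that the $\Omega$ just defined is holomorphic on the \emph{whole} cut plane $\dC\setminus\{(-\infty,-1]\cup[1,+\infty)\}$, equivalently that $-1\in\rho(\cQ(\lambda))$ for \emph{every} $\lambda\in\dC\setminus\dR_+$. This is immediate for $\lambda<0$ (from $\cQ(\lambda)\ge0$) and for $\RE\lambda<0$ (from strict accretivity of $\cQ(\lambda)+I_\sM$, seen from the Stieltjes integral representation), but for $\lambda\in\dC_+$ with $\RE\lambda>0$ maximal dissipativity alone does not keep $-1$ out of $\sigma(\cQ(\lambda))$, and one must use the Stieltjes hypothesis in full: either via the operator Stieltjes integral representation, or by a scalar-reduction argument — a unit vector $h$ with $\cQ(\lambda_0)h\ni-h$ would make a scalar Stieltjes function attain the value $-1$ in $\dC_+$, contradicting the nonnegativity of its imaginary part there. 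Finally, it should be noted that the operator expression $-I+2(I-\Omega(w))^{-1}$ in \eqref{formula1} is to be read in the relation sense when $1\in\sigma(\Omega(w))$ (e.g.\ $\Omega\equiv I$ produces $\cQ(\lambda)=\{0\}\times\sM$), which is why all the above arguments are phrased through the relation-theoretic forms of \eqref{formula1}--\eqref{formula2}.
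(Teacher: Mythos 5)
Your forward direction, the verification that $\cR=-\cQ^{-1}$, and the resulting equivalence are correct and essentially follow the paper's route. The genuine gap sits exactly where you place the crux of the converse: proving that $-1\in\rho(\cQ(\lambda))$ for \emph{every} $\lambda\in\dC\setminus\dR_+$, so that $\Omega$ is a $\bB(\sM)$-valued holomorphic function on the whole cut plane. Neither of the two arguments you sketch for this closes it. The scalar-reduction argument only excludes $-1$ as an eigenvalue of the relation $\cQ(\lambda_0)$; for a maximal dissipative relation a real point can fail to belong to the resolvent set because $\ran(\cQ(\lambda_0)+I)$ is not all of $\sM$ or because the inverse is unbounded (approximate point spectrum), and ruling out an honest eigenvector says nothing about these cases. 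Moreover the scalar function you would want, $(\cQ(\lambda)h,h)$, is not even defined for a family of relations: $h$ need not lie in $\dom\cQ(\lambda)$ for all $\lambda$, and where it does, $\cQ(\lambda)h$ is only a coset modulo $\mul\cQ(\lambda)$, so there is no scalar Stieltjes function attaining the value $-1$ to contradict. The alternative you invoke -- an operator Stieltjes integral representation (also used to claim accretivity of $\cQ(\lambda)+I$ for $\RE\lambda<0$) -- exists for $\bB(\sM)$-valued Stieltjes functions but not off the shelf for Stieltjes \emph{families of linear relations}, which is precisely the generality of the lemma; within this paper such a representation would in effect presuppose the realization theorems that are built on this very lemma.

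What actually closes the gap in the paper is condition (ii) of Definition~\ref{invStieltjes}, which your converse never uses: for $\mu\in\dC_+$ the family $(\cQ(\lambda)+\mu)^{-1}$ continues analytically across $(-\infty,0)$. Combining this with the boundedness of $(\cQ(x)+I)^{-1}$ for $x<0$ (from $\cQ(x)\ge 0$) and the resolvent identity $\bigl(I+(\nu-\mu)(\cQ(\lambda)-\nu)^{-1}\bigr)^{-1}=I+(\nu-\mu)(\cQ(\lambda)-\mu)^{-1}$ with $\mu=-1$ and $\nu$ near $-1$, one gets that $(\cQ(\lambda)+I)^{-1}$ is a bounded holomorphic operator function on a neighbourhood of $(-\infty,0)$; then, since $-(\cQ(\lambda)+I)^{-1}$ is itself a Nevanlinna family, boundedness of its values at a single point of $\cmr$ propagates to all of $\cmr$ by \cite[Proposition~4.18]{DHMS06}, which is exactly the statement $-1\in\rho(\cQ(\lambda))$ everywhere. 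You would need to reproduce this mechanism (or supply an equally complete substitute handling range and approximate-spectrum obstructions) before your converse can be considered proved.
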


\begin{proof}
Observe the following mapping properties
\begin{equation}\label{NevTrans1}
\lambda\in \dC\setminus\dR_+\iff z:=\cfrac{1+\lambda}{1-\lambda}\in\dC\setminus\{(-\infty,-1]\cup[1,+\infty)\},
\end{equation}
with inverse transform for $\lambda$,
\begin{equation}\label{NevTrans2}
 \lambda=\cfrac{z-1}{z+1},\quad \IM \lambda=\cfrac{2\IM z}{|z+1|^2}.
\end{equation}

Now assume that $\Omega(z)\in \cRS(\sM)$ and let $\cQ(\lambda)$ be given by \eqref{formula1}.
Then $\Omega(z)$ is an operator valued Nevanlinna function with $-I\le \Omega(x)\le I$, $x\in (-1,1)$.
Using \eqref{formula1} one obtains
\[
\left(\cQ(\lambda)+\mu I\right)^{-1}=-\cfrac{1}{1-\mu}\left(I-\cfrac{2}{1-\mu}\left(\cfrac{1+\mu}{1-\mu} I+\Omega\left(\cfrac{1+\lambda}{1-\lambda}\right)\right)^{-1}\right).
\]
This shows that $\left(\cQ(\lambda)+\mu I\right)^{-1}$ admits an analytic continuation to the negative semi-axis $(-\infty,0)$.
On the other hand,
\begin{equation}\label{ineqOmega}
 -I\le \Omega(x)\le I \quad\Longleftrightarrow\quad 2(I-\Omega(x))^{-1}\geq I, \quad x\in (-1,1),
\end{equation}
and hence $\cQ(\lambda)\geq 0$ for $\lambda<0$. By Definition \ref{invStieltjes} one concludes that
$\cQ(\lambda)\in\wt\cS(\sM)$.

By comparing the formulas \eqref{formula1} and  \eqref{formula2} it is seen that $\cR(\lambda)=-\cQ^{-1}(\lambda)$.
Therefore, $\cR(\lambda)\in\wt\cS^{(-1)}(\sM)$.

Conversely, assume that $\cQ(\lambda)$ is a Stieltjes family. Since
$\cQ(x)$ is a nonnegative selfadjoint relation for $x<0$, the resolvent
$(\cQ(x)+I)^{-1}:\sM\to \sM$ is bounded for $x<0$. By assumption $\cQ(z)$ is also a Nevanlinna family which admits an analytic
continuation to the semi-axis $(-\infty,0)$ in the resolvent sense; see Definition \ref{invStieltjes}.
The formula
\[
 \left(I+(\nu-\mu)(\cQ(\lambda)-\nu)^{-1}\right)^{-1}=I+(\nu-\mu)(\cQ(\lambda)-\mu)^{-1}
\]
applied to $\mu=-1$ and $|\nu-\mu|<\delta$ (with $\delta$ small enough)
implies that $(\cQ(\lambda)+I)^{-1}$ also admits an analytic continuation
to the semi-axis $(-\infty,0)$, so that $(\cQ(\lambda)+I)^{-1}$ is holomorphic on $\dC\setminus \dR_+$.
In particular, $(\cQ(\lambda)+I)^{-1}$ is bounded, when $\lambda=x+i y$ is sufficiently close to a real point $x<0$.
Since $-(\cQ(\lambda)+I)^{-1}$ is a Nevanlinna family, boundedness at a single point $\lambda_0\in\cmr$ implies
boundedness of $-(\cQ(\lambda)+I)^{-1}$ at every point $\lambda\in\cmr$; see e.g. \cite[Proposition~4.18]{DHMS06}.
Now define
\[
\Omega(z):=I-2\left(I+\cQ\left(\cfrac{z-1}{1+z}\right)\right)^{-1},\;
z\in\dC\setminus\left((-\infty,-1]\cup[1,+\infty)\right).
\]
Then $\Omega(z)$ is a $\bB(\sM)$-valued Nevanlinna
function defined on
$\dC\setminus\left\{(-\infty,-1]\cup[1,+\infty)\right\}$ and
\[
-I\le \Omega(x)\le I,\quad x\in(-1,1),
\]
see \eqref{NevTrans1}, \eqref{NevTrans2}, and \eqref{ineqOmega}.
Hence, $\Omega\in\cRS(\sM)$ and \eqref{formula1} holds.

In the case where $\cR(\lambda)$ is from the inverse Stieltjes class one can use
the result just proved for the Stieltjes family by employing the identity $\cR(\lambda)=-\cQ^{-1}(\lambda)$,
which is clear from \eqref{formula1}, \eqref{formula2}.
\end{proof}

\subsection{Representations by means of compressed resolvents}

In this subsection representation theorems for general Stieltjes or an inverse Stieltjes families are established as compressed resolvents
along the lines of Theorem~\ref{inverse1}. These involve again transformations of a selfadjoint relation $\wt A$ which, in addition, is nonnegative.
In this case it is convenient to introduce the following linear fractional transformation of $\wt A$,
\begin{equation}\label{ATtrans}
 T=-I+2(I+\wt A)^{-1}.
\end{equation}

We start with a lemma containing some simple, but useful, observations.

\begin{lemma}\label{TTlemma}
Let $\wt A$ be a nonnegative selfadjoint relation in the Hilbert space $\sH=\sM\oplus\sK$ and let
$\wh B=\sP_\sM(\wt A)$ be defined by \eqref{ghtjha21}. Then the transforms
\[
 T=-I+2(I+\wt A)^{-1}, \qquad  \wh T=-I+2(I+\wh B)^{-1}
\]
are contractive, $T$ is selfadjoint, $\wh T$ is $\wh J_\sM$-selfadjoint, i.e. $\wh J_\sM\wh T=\wh T^*\wh J_\sM$,
where the fundamental symmetry $\wh J_\sM$ is defined by \eqref{krefund}, and they have block representations
\begin{equation}\label{blockT}
 T=\begin{bmatrix} D&C\cr C^*&F\end{bmatrix}:\begin{array}{l}\sM\\\oplus\\\sK\end{array}\to \begin{array}{l}\sM\\\oplus\\\sK\end{array}
\end{equation}
and
\begin{equation}\label{BTtrans2}
\wh T=\wh J_\sM T=\begin{bmatrix} -D&-C\cr C^*&F\end{bmatrix}:\begin{array}{l}\sM\\\oplus\\\sK\end{array}\to \begin{array}{l}\sM\\\oplus\\\sK\end{array}.
\end{equation}
Conversely, if $T$ is a selfadjoint contraction as in \eqref{blockT} and $\wh T$ is given by \eqref{BTtrans2}, then
\begin{equation}\label{Atrans}
\wt A=\left\{\left\{(I+ T)h,(I-T)h\right\}:\; h\in \sH\right\}
\end{equation}
is a nonnegative selfadjoint relation in $\sH$ and
\begin{equation}\label{BTtrans}
 \wh B=\left\{\left\{(I+\wh T)h,(I-\wh T)h\right\}:\;h\in \sH\right\}
\end{equation}
is maximal accretive $\wh J_\sM$-selfadjoint relation in $\sH$ and, moreover, $\wh B=\sP_\sM(\wt A)$.
\end{lemma}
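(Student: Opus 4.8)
The plan is to reduce the whole statement to the classical one‑to‑one correspondence between nonnegative selfadjoint relations and selfadjoint contractions, applied once to $\wt A$ and once (via $\sP_\sM$) to $\wh B$, and to connect the two resulting contractions by an elementary block computation. First recall that correspondence: if $\wt A\ge 0$ is selfadjoint then $-1\in\rho(\wt A)$, so $(I+\wt A)^{-1}\in\bB(\sH)$ and $T:=-I+2(I+\wt A)^{-1}\in\bB(\sH)$ is well defined; since $(I+\wt A)^*=I+\wt A$ one gets $T^*=T$, and writing $w=u+u'$ with $\{u,u'\}\in\wt A$ (i.e. $u=(I+\wt A)^{-1}w$) one has $Tw=u-u'$, whence $\|w\|^2-\|Tw\|^2=4\,\RE(u,u')=4(u',u)\ge 0$ by selfadjointness and nonnegativity of $\wt A$, so $\|T\|\le 1$. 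Conversely, for a selfadjoint contraction $T$ the relation $\wt A$ in \eqref{Atrans} is selfadjoint (its adjoint is $\{\{k,k'\}:(I-T)k=(I+T)k'\}$, which coincides with \eqref{Atrans} by a direct check using $T=T^*$) and nonnegative, because $((I-T)h,(I+T)h)=((I-T^2)h,h)\ge 0$ as $\|T\|\le 1$; moreover \eqref{Atrans} inverts \eqref{ATtrans}, since for $\{u,u'\}\in\wt A$ the choice $h=\frac12(u+u')$ gives $(I+T)h=u$, $(I-T)h=u'$. This correspondence is standard (see e.g. \cite{Ar, CS, DdeS}); the only facts about $T$ used below are that it is a selfadjoint contraction on $\sH=\sM\oplus\sK$, so it automatically has the block form \eqref{blockT}.

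For the forward part, start from $\wt A\ge 0$ selfadjoint, let $T$ be as in \eqref{ATtrans} (a selfadjoint contraction, block form \eqref{blockT}), and recall from Proposition~\ref{ghblev}(iii) that $\wh B=\sP_\sM(\wt A)$ is $\wh J_\sM$-selfadjoint and maximal accretive; in particular $-1\in\rho(\wh B)$, so $\wh T:=-I+2(I+\wh B)^{-1}\in\bB(\sH)$ is well defined. The key step is the elementary block computation: expand $(I\pm T)\begin{bmatrix}\eta\\ \xi\end{bmatrix}$, apply $\sP_\sM$ as in \eqref{ghtjha21} (which swaps the $\sM$-components of the pair), and compare; one finds that $\sP_\sM$ sends the generic element $\{(I+T)h,(I-T)h\}$ of $\wt A$, with $h=\begin{bmatrix}\eta\\ \xi\end{bmatrix}$, to $\{(I+\wh J_\sM T)h,(I-\wh J_\sM T)h\}$ with the same $h$. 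Since $\sP_\sM$ is a bijection of $\sH^2$, this yields $\wh B=\sP_\sM(\wt A)=\{\{(I+\wh J_\sM T)h,(I-\wh J_\sM T)h\}:h\in\sH\}$; reading off $(I+\wh B)^{-1}$ from this relation form (using $(I+\wh J_\sM T)h+(I-\wh J_\sM T)h=2h$ for all $h$) gives $(I+\wh B)^{-1}=\frac12(I+\wh J_\sM T)$, hence $\wh T=\wh J_\sM T$, which is exactly \eqref{BTtrans2}. Finally $\|\wh T\|=\|\wh J_\sM T\|=\|T\|\le 1$ because $\wh J_\sM$ in \eqref{krefund} is unitary and selfadjoint, and $\wh J_\sM\wh T=\wh J_\sM^2T=T=T^*=(\wh J_\sM T)^*\wh J_\sM=\wh T^*\wh J_\sM$, i.e. $\wh T$ is $\wh J_\sM$-selfadjoint.

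The converse part is then a re-reading of the same computation. Given a selfadjoint contraction $T$ with block form \eqref{blockT} and $\wh T=\wh J_\sM T$ as in \eqref{BTtrans2}, the relation $\wt A$ in \eqref{Atrans} is nonnegative and selfadjoint by the correspondence recalled above, and the same block identity shows $\sP_\sM(\wt A)=\{\{(I+\wh J_\sM T)h,(I-\wh J_\sM T)h\}:h\in\sH\}=\{\{(I+\wh T)h,(I-\wh T)h\}:h\in\sH\}=\wh B$; since $\wt A$ is maximal nonnegative, Proposition~\ref{ghblev}(iii) gives that $\wh B=\sP_\sM(\wt A)$ is $\wh J_\sM$-selfadjoint and maximal accretive. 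I expect no serious obstacle here: the only points needing care are bookkeeping of the multivalued parts when passing between the relations $\wt A,\wh B$ and the bounded operators $T,\wh T$ (which is precisely why $-I+2(I+\cdot)^{-1}$, rather than $(I-\cdot)(I+\cdot)^{-1}$, is the convenient definition), and a clean verification of the single block computation that identifies $\sP_\sM$ with the passage $T\mapsto\wh J_\sM T$.
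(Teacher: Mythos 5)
Your proof is correct and follows essentially the same route as the paper: the classical correspondence between nonnegative selfadjoint relations and selfadjoint contractions, Proposition~\ref{ghblev} for the maximal accretive $\wh J_\sM$-selfadjoint properties of $\wh B$, and the block computation showing that $\sP_\sM$ carries the relation generated by $T$ to the one generated by $\wh J_\sM T$. The only difference is cosmetic: you extract $\wh T=\wh J_\sM T$ by reading off $(I+\wh B)^{-1}=\tfrac12(I+\wh J_\sM T)$ and then verify contractivity and $\wh J_\sM$-selfadjointness directly, whereas the paper invokes the general equivalences between properties of $\wh B$ and of its transform $\wh T$.
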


\begin{proof}
Since the transformation \eqref{ATtrans} is an involution, $T=-I+2(I+\wt A)^{-1}$ and $\wt A$ are connected also by \eqref{Atrans},
and $\wh T=-I+2(I+\wh B)^{-1}$ and $\wh B$ are connected by \eqref{BTtrans}.
It is well known and easy to check that $\wt A$ is selfadjoint and nonnegative precisely when $T$ is a selfadjoint contraction.
Moreover, $\wh B$ is maximal accretive if and only if $\wh T$ is a contraction in $\bB(\sH)$.
On the other hand, by Proposition \ref{ghblev} $\wt A$ is selfadjoint and nonnegative if and only if $\wh B=\sP_\sM(\wt A)$
is $\wh J_\sM$-selfadjoint and maximal accretive.
When $\wh B$ is $\wh J_\sM$-selfadjoint then also $\wh T$ is $\wh J_\sM$-selfadjoint, and conversely.

Now, using \eqref{blockT} and \eqref{Atrans} one gets
\[
\wt A=\left\{\left\{\begin{bmatrix}(I_\sM+D)\f+Cf\cr
C^*\f+(I+F)f \end{bmatrix}, \begin{bmatrix}(I_\sM-D)\f-Cf\cr
-C^*\f+(I-F)f \end{bmatrix}\right\}:\;\f\in\sM,\;f\in\sK\right\},
\]
while \eqref{BTtrans2} and \eqref{BTtrans} lead to
\[
\wh B=\left\{\left\{\begin{bmatrix}(I_\sM-D)\f-Cf\cr
C^*\f+(I+F)f \end{bmatrix}, \begin{bmatrix}(I_\sM+D)\f+Cf\cr
-C^*\f+(I-F)f \end{bmatrix}\right\}:\;\f\in\sM,\;f\in\sK\right\}.
\]
One concludes that $\wh B$ and $\wt A$ are connected by $\wh B=\sP_\sM(\wh A)$ and, conversely,
if $\wh B=\sP_\sM(\wh A)$, then their transforms $\wh T$ and $T$ are connected by $\wh T=\wh J_\sM T$
as in \eqref{BTtrans2}.
\end{proof}

The contractive operators $T$ and $\wh T$ generate discrete-time passive linear systems which are shortly treated in Appendix \ref{AppendA}.
The next lemma connects $\sM$-minimality of $\wt A$ and $\wh B$ in Lemma \ref{TTlemma} to their simplicity.

\begin{lemma}\label{TTlemma2}
Let $\wt A$, $\wh B$, $T$ and $\wt T$ be as in Lemma \ref{TTlemma}.
and let $\tau=\left\{ T,\sM,\sM,\sK\right\}$ and $\tau_-=\left\{ \wh T,\sM,\sM,\sK\right\}$
be the discrete-time passive linear systems generated by $T$ and $\wh T$, respectively (see Appendix \ref{AppendA}).
Then the following statements are equivalent:
\begin{enumerate}
\def\labelenumi{\rm (\roman{enumi})}
\item $\wt A$ is $\sM$-minimal;
\item $\wh B$ is $\sM$-minimal;
\item the system $\tau$ is simple;
\item the system $\tau_-$ is simple.
\end{enumerate}
\end{lemma}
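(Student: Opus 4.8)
The plan is to show a cycle of implications, taking advantage of the two already-established equivalences: the $\sM$-minimality of $\wt A$ and $\wh B$ (Proposition~\ref{ghblev}(iv), since $\wh B=\sP_\sM(\wt A)$), and the classical fact that simplicity of a passive system generated by a contraction $T$ (with block form \eqref{blockT}) is characterized by the condition \eqref{smplty} on the block entries. I would first record that (i)$\iff$(ii) is immediate from Proposition~\ref{ghblev}(iv). Then it suffices to connect $\sM$-minimality of $\wt A$ to simplicity of $\tau$, i.e. (i)$\iff$(iii); the equivalence (ii)$\iff$(iv) follows by the same argument applied to $\wh B$, $\wh T$, and $\tau_-$, using that $\wh B=\sP_\sM(\wt A)$ is $\wh J_\sM$-selfadjoint and maximal accretive with block form \eqref{BTtrans2} — note that the off-diagonal blocks of $\wh T$ are $-C$ and $C^*$, so the kernels $\bigcap_n\ker(C^* F^{*n})$ and $\bigcap_n\ker(CF^n)$ governing simplicity are literally the same as for $\tau$, which in fact also gives (iii)$\iff$(iv) directly.

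First I would make the link between the resolvent of $\wt A$ and powers of $T$. Since $\wt A$ and $T=-I+2(I+\wt A)^{-1}$ are connected by the involution \eqref{ATtrans}, one has $(I+\wt A)^{-1}=\tfrac12(I+T)$, and more generally $\lambda\in\rho(\wt A)$ corresponds to a point in $\rho(T)$ under the Möbius map, with $(\wt A-\lambda I)^{-1}$ expressible as a rational (degree one) function of $(I-zT)^{-1}$ for the corresponding $z$. Concretely, from \eqref{ATtrans}, for $z=\tfrac{\lambda-1}{\lambda+1}$ (so $\lambda=\tfrac{1+z}{1-z}$) one gets
\[
 (\wt A-\lambda I)^{-1}=-\frac{1}{\lambda+1}\,I+\frac{2}{(\lambda+1)^2}\,(I-zT)^{-1},
\]
valid for $\lambda\neq -1$ in $\rho(\wt A)$. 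Hence $\sM+(\wt A-\lambda I)^{-1}\sM=\sM+(I-zT)^{-1}\sM$ for such $\lambda$, so that
\[
 \cspan\{\sM+(\wt A-\lambda I)^{-1}\sM:\lambda\in\dC\setminus\dR_+\}
 =\cspan\{(I-zT)^{-1}\sM:z\in\cU\},
\]
where $\cU$ is the image of $\dC\setminus\dR_+$ (an open set containing a neighborhood of the origin). Therefore $\wt A$ is $\sM$-minimal exactly when $\cspan\{(I-zT)^{-1}\sM:z\in\cU\}=\sH$. Expanding $(I-zT)^{-1}=\sum_{n\ge0}z^n T^n$ near the origin, this is equivalent to $\cspan\{T^n\sM:n\in\dN_0\}=\sH$, which, via the Schur--Frobenius block computation exactly as in \eqref{smplty} (applied to the selfadjoint contraction $T$ with block form \eqref{blockT}), is equivalent to $\bigl(\bigcap_{n\ge0}\ker(C^*F^{*n})\bigr)\cap\bigl(\bigcap_{n\ge0}\ker(CF^n)\bigr)=\{0\}$ — which is precisely the simplicity of the system $\tau=\{T,\sM,\sM,\sK\}$; see Appendix~\ref{AppendA}. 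This establishes (i)$\iff$(iii).

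For (ii)$\iff$(iv) I would not need the $\wh J_\sM$-selfadjointness machinery beyond what Lemma~\ref{TTlemma} already supplies: $\wh B$ is maximal accretive and connected to the contraction $\wh T$ by the same involution \eqref{ATtrans}, so the identical Möbius-transform computation gives $\sM+(\wh B-\lambda I)^{-1}\sM=\sM+(I-z\wh T)^{-1}\sM$ for $\RE\lambda<0$ (here the relevant open set is the image of the left half-plane, again a punctured neighborhood of $0$), whence $\wh B$ is $\sM$-minimal iff $\cspan\{\wh T^{\,n}\sM:n\in\dN_0\}=\sH$ iff $\tau_-$ is simple. Then, as noted, since $\wh T$ has the same off-diagonal blocks up to sign as $T$ (namely $-C$ above and $C^*$ below, by \eqref{BTtrans2}), the simplicity criteria \eqref{smplty} for $\tau$ and for $\tau_-$ coincide verbatim, giving (iii)$\iff$(iv) and closing the cycle. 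The only mildly delicate point — and the place I would be most careful — is the bookkeeping at the excluded Möbius points ($\lambda=-1$ for $T$, and the corresponding point for $\wh T$) and the verification that the image sets $\cU$ are genuine open neighborhoods of the origin so that the power-series argument collapsing $\{(I-zT)^{-1}\sM\}$ to $\{T^n\sM\}$ is legitimate; both are routine once one recalls that $0\in\rho(T)\cap\rho(\wh T)$ automatically, since $\mul\wt A$ and $\mul\wh B$ are trivial is not needed — only that $-1\notin\sigma(T)$ fails harmlessly as a single point, handled by the density of the remaining $z$'s.
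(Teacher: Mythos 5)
Your overall route is the same as the paper's: (i)$\iff$(ii) from Proposition~\ref{ghblev}(iv), then a resolvent identity turning $\sM+(\wt A-\lambda I)^{-1}\sM$ into $\sM+(I-zT)^{-1}\sM$, a power-series collapse to $\cspan\{T^n\sM\}$, and the block computation identifying this with simplicity (the paper does exactly this via \eqref{connectresab}, \eqref{connectresab11} and \eqref{SPAN11}--\eqref{SPAN22}). However, the concrete resolvent identity you wrote is wrong, and the error propagates into the very point you flagged as delicate. With $T=-I+2(I+\wt A)^{-1}$ the correct substitution is $z=\frac{1+\lambda}{1-\lambda}$, not $z=\frac{\lambda-1}{\lambda+1}$, and the correct identity is
\[
(\wt A-\lambda I)^{-1}=\frac{1}{1-\lambda}\,(T+I)\Bigl(I-\tfrac{1+\lambda}{1-\lambda}T\Bigr)^{-1}
=-\frac{1}{1+\lambda}\,I+\frac{2}{1-\lambda^{2}}\Bigl(I-\tfrac{1+\lambda}{1-\lambda}T\Bigr)^{-1}.
\]
A scalar check exposes the slip: $T=0$ gives $\wt A=I$ and $(\wt A-\lambda)^{-1}=(1-\lambda)^{-1}$, whereas your formula gives $-\frac{1}{\lambda+1}+\frac{2}{(\lambda+1)^{2}}$, which already fails at $\lambda=-3$. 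Structurally, $(I-zT)^{-1}$ is singular when $1/z\in\sigma(T)$, which under your map $z=\frac{\lambda-1}{\lambda+1}$ corresponds to $-1/\lambda\in\sigma(\wt A)$ rather than $\lambda\in\sigma(\wt A)$, so no identity of the form you claim can hold. Moreover, your set $\cU=\bigl\{\frac{\lambda-1}{\lambda+1}:\lambda\in\dC\setminus\dR_+\bigr\}$ does \emph{not} contain a neighbourhood of the origin ($z=0$ would force $\lambda=1\in\dR_+$), so the expansion $(I-zT)^{-1}=\sum_{n\ge0}z^nT^n$ is not available on your $\cU$ as written. With the correct substitution the image of $\dC\setminus\dR_+$ is $\dC\setminus\{(-\infty,-1]\cup[1,+\infty)\}$, which does contain a disk about $0$, and then your argument goes through and coincides with the paper's proof.

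A smaller point: the criterion you invoke from \eqref{smplty} is stated for a unitary $U$ and bilateral powers; what is actually needed (and what the paper uses) is \eqref{SPAN11}--\eqref{SPAN22}, giving $\cspan\{T^n\sM:n\in\dN_0\}=\sM\oplus\cspan\{F^nC^*\sM:n\in\dN_0\}$, i.e.\ controllability. For a general passive system this is strictly stronger than simplicity; here they coincide only because $F=F^*$ and the input operator of both $\tau$ and $\tau_-$ is $C^*$, so the controllable and observable subspaces agree. You use this implicitly (``selfadjoint contraction $T$''), but it should be stated. Your observation that $\tau$ and $\tau_-$ have literally the same simplicity condition, giving (iii)$\iff$(iv) directly, is correct and matches the paper's conclusion.
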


\begin{proof}
The equivalence of (i) and (ii) was proved in Proposition \ref{ghblev} (iv).
To prove their equivalence to (iii) and (iv) decompose $T$ and $\wh T$ as in \eqref{blockT} and \eqref{BTtrans2}.
Then using \eqref{ATtrans} it can be verified that the resolvents of $T$ and $\wt A$ are connected by
\begin{equation}
\label{connectresab}
(\wt A-\lambda
I)^{-1}=\cfrac{1}{1-\lambda}(T+I)\left(I-\frac{1+\lambda}{1-\lambda}T\right)^{-1},\;\lambda\in\dC\setminus\dR_+.
\end{equation}
Similarly the resolvents of $\wh T$ and $\wh B$ are connected by
\begin{equation}\label{connectresab11}
(\wh B -\lambda
I)^{-1}=\cfrac{1}{1-\lambda}(\wh T+I)\left(I-\frac{1+\lambda}{1-\lambda}\wh T\right)^{-1},\;\RE\lambda<0.
\end{equation}
These relations combined with \eqref{SPAN11} in Appendix \ref{AppendA} imply the equalities
\[
\begin{array}{l}
\cspan\left\{\sM+\left(\wt A-\lambda I\right)^{-1}\sM:\lambda\in \dC\setminus\dR_+\right\}
=\cspan\left\{(I-z T)^{-1}\sM:z\in\dD\right\}\\
=\cspan\left\{T^n\sM: n\in\dN_0\right\}=\sM\oplus\cspan\left\{F^nC^*\sM: n\in\dN_0\right\},
\end{array}
\]
and
\[
\begin{array}{l}
\cspan\left\{\sM+\left(\wh B-\lambda I\right)^{-1}\sM: \RE\lambda<0\right\}=\cspan\left\{(I-z \wh T)^{-1}\sM:z\in\dD\right\}\\
=\cspan\left\{\wh T^n\sM: n\in\dN_0\right\}=\sM\oplus\cspan\left\{F^nC^*\sM: n\in\dN_0\right\}.
\end{array}
\]
Since $T=T^*$ and $\wt T=\wh J_\sM T$ has the simple expression \eqref{BTtrans2}
the equality \eqref{SPAN22} involving $T^*$ and $\wh T^*$ yields the same identities as stated above.
Therefore,
$\wh B$ is $\sM$-minimal precisely when $\tau_-$ is simple $\wt A$ is $\sM$-minimal precisely when $\tau$ is simple and
$\wh B$ is $\sM$-minimal precisely when $\tau_-$ is simple.
This proves the remaining equivalences.
\end{proof}

We now consider compressed resolvents of the linear relations $\wt A$ and $\wh B$ appearing in Lemma \ref{TTlemma}.
While the discussion given in the introduction after the formulas \eqref{opexpr1} and \eqref{opexpr1B}
yields the first statement in the next theorem, it is convenient to use here the transformation \eqref{ATtrans} to
keep connections visible with other forthcoming statements.

\begin{theorem}\label{TH0}
Let $\sM$ be a subspace of the Hilbert space $\sH$ and let $\wt A$ is a nonnegative selfadjoint relation in $\sH=\sM\oplus\sK$.
Then the following assertions hold:
\begin{enumerate}
  \item  The compressed resolvent
$P_\sM(\wt A-\lambda I)^{-1}\uphar\sM$ admits the representation
\begin{equation}\label{aarep}
P_\sM(\wt A-\lambda I)^{-1}\uphar\sM=-(\cR(\lambda)+\lambda I_\sM)^{-1},\quad \lambda\in\dC\setminus\dR_+,
\end{equation}
with $\cR\in\wt\cS^{(-1)}(\sM)$.

  \item If $\wh B$ is a maximal accretive and $\wh J_\sM$-selfadjoint relation w.r.t. the fundamental symmetry $\wh J_\sM$ defined in \eqref{krefund}, then the compressed resolvent $P_\sM(\wh B-\lambda I)^{-1}\uphar\sM$ admits the representation
\begin{equation}\label{brep}
P_\sM(\wh B-\lambda I)^{-1}\uphar\sM=(\cQ(\lambda)-\lambda I_\sM)^{-1}, \quad \RE \lambda<0,
\end{equation}
with $\cQ\in\wt\cS(\sM)$.

\item If $\wh A$ is defined as follows (see \eqref{ghtjha21}, \eqref{ghtjha1})
\begin{equation}
\label{ghtjha112}
\wh A=\sJ_\sM\sP_\sM(\wh B)=\left\{\,
 \left\{ \begin{bmatrix}-ih \cr f \end{bmatrix},
        \begin{bmatrix} ih' \cr f' \end{bmatrix} \right\}: \left\{\,
  \begin{bmatrix} h\\ f  \end{bmatrix},
         \begin{bmatrix}  h' \\ f'\end{bmatrix}
          \right\}\in\wh B\right\},
 \end{equation}
then $\wh A$ is a selfadjoint relation and
\begin{equation}\label{arep}
P_\sM(\wh A-\lambda I)^{-1}\uphar\sM=-(\cQ(\lambda)+\lambda I_\sM)^{-1}, \quad \lambda\in\cmr,
\end{equation}
where $\cQ\in\wt\cS(\sM)$ is the same function as in \eqref{brep}.

 \item If $\breve{A} =-\sJ_\sK(\wh A)$, where $\wh A$ is as defined in \eqref{ghtjha112}, then $\breve{A}$ is a selfadjoint nonnegative relation in $\sH$ and
\begin{equation}
\label{opexpr3}
 P_\sM\left(\breve{A}-\lambda I\right)^{-1}\uphar\sM=-\left(-\cQ\left(\cfrac{1}{\lambda}\right)+\lambda I_\sM\right)^{-1},\quad \lambda\in\dC\setminus\dR_+,
\end{equation}
where $-\cQ\left(\cfrac{1}{\lambda}\right)\in\wt\cS^{(-1)}(\sM)$ and $\cQ\in\wt\cS(\sM)$ is the same function as in \eqref{brep}.
\end{enumerate}
Moreover, if $\wt A$ in \eqref{aarep} and $\wh B$ in \eqref{brep} are connected by $\wh B=\sP_\sM(\wt A)$ then $\cQ(\lambda)=-\cR^{-1}(\lambda)$ and, furthermore, $\breve{A}=\wt A^{-1}$.
\end{theorem}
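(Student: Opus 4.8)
The plan is to prove the four representation formulas \eqref{aarep}--\eqref{opexpr3} together by tracking the selfadjoint/accretive relation through the chain of transformations, and then to verify the two final identities $\cQ=-\cR^{-1}$ and $\breve A=\wt A^{-1}$.

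First I would establish \eqref{aarep}. Since $\wt A$ is nonnegative and selfadjoint, the compressed resolvent $P_\sM(\wt A-\lambda I)^{-1}\uphar\sM$ belongs to $\wt R(\sM)$ (by \eqref{compres1}), and by Theorem~\ref{inverse1}(1), applied with a suitable $\sM$-minimal part, it has the form $-(\cR(\lambda)+\lambda I_\sM)^{-1}$ for a Nevanlinna family $\cR$; passing to the $\sM$-minimal reduction does not change the compressed resolvent. That $\cR\in\wt\cS^{(-1)}(\sM)$ is exactly the statement recorded after \eqref{opexpr1B}: since $P_\sM(\wt A-\lambda I)^{-1}$ is a Stieltjes function of bounded operators, inversion property (b) forces $\cR$ to be inverse Stieltjes; alternatively one invokes the explicit analytic continuation through \eqref{connectresab} (with $T$ from Lemma~\ref{TTlemma} contractive and selfadjoint, so $I-\frac{1+\lambda}{1-\lambda}T$ is boundedly invertible for $\lambda\in\dC\setminus\dR_+$) to see that the continuation to $(-\infty,0)$ exists and is nonpositive there.

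Next, for \eqref{brep}, I would use Lemma~\ref{TTlemma}: the $\wh J_\sM$-selfadjoint maximal accretive $\wh B$ corresponds to $\wh T=\wh J_\sM T$ with block form \eqref{BTtrans2}, contractive in $\bB(\sH)$, and its resolvent satisfies \eqref{connectresab11} for $\RE\lambda<0$. Computing $P_\sM(\wh B-\lambda I)^{-1}\uphar\sM$ from \eqref{connectresab11} via the Schur--Frobenius block formula (as in \eqref{sh-fros}) gives $(I_\sM-z\Psi_-(z))^{-1}$ with $z=\frac{1+\lambda}{1-\lambda}$ and $\Psi_-$ the transfer function of $\tau_-$; rewriting this in the Nevanlinna variable produces $(\cQ(\lambda)-\lambda I_\sM)^{-1}$ with $\cQ$ of the form \eqref{formula1} (up to the change $\Omega=\Psi_-$, using $-I\le\Psi_-(x)\le I$ on $(-1,1)$ since $\Psi_-\in\cRS(\sM)$), hence $\cQ\in\wt\cS(\sM)$ by Lemma~\ref{TH1}. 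For \eqref{arep} I would note that $\wh A=\sJ_\sM\sP_\sM(\wh B)=\sJ_\sM(\wt A)$ when $\wh B=\sP_\sM(\wt A)$, since $\sP_\sM$ is an involution; being an $\sJ_\sM$-image of the selfadjoint $\wt A$, $\wh A$ is selfadjoint (Proposition~\ref{ghblev1}(i)), and the relation between the two compressed resolvents of $\wt A$ and $\sJ_\sM(\wt A)$ is exactly Theorem~\ref{inverse1}(2): replacing $\cM$ by $\cR$ there and comparing gives \eqref{arep} with the \emph{same} $\cQ$, once one knows $\cQ=-\cR^{-1}$. Then \eqref{opexpr3} follows from Theorem~\ref{inverse1}(3) applied to $\wh A$ (giving $\breve A=-\sJ_\sK(\wh A)$ and the compressed resolvent $-(-\cQ(1/\lambda)+\lambda I)^{-1}$), together with the observation that $\breve A$ is nonnegative selfadjoint, which one reads off either from $-\cQ(1/\cdot)\in\wt\cS^{(-1)}(\sM)$ (property (a) of the inversion dictionary) forcing the representing relation to be nonnegative by the converse direction, or directly from the block form.

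Finally, the two identities at the end. The identity $\cQ=-\cR^{-1}$ is the bridge that makes the formulas consistent: with $\wh B=\sP_\sM(\wt A)$, formulas \eqref{aarep} and \eqref{brep} both hold, and the general inversion fact (b) stated after Definition~\ref{invStieltjes} — or equivalently Lemma~\ref{TH1}'s last sentence together with the identification of the same $\Omega$ arising from $\tau$ and $\tau_-$ (which have transfer functions related precisely by the Nevanlinna--Schur correspondence behind \eqref{formula1}--\eqref{formula2}) — yields $\cR=-\cQ^{-1}$. For $\breve A=\wt A^{-1}$ I would argue at the level of graphs: from Lemma~\ref{TTlemma}, $\wt A=\{\{(I+T)h,(I-T)h\}:h\in\sH\}$, and unwinding the definitions $\wh B=\sP_\sM(\wt A)$, $\wh A=\sJ_\sM(\wh B)$, $\breve A=-\sJ_\sK(\wh A)$ on this graph, each transformation being an explicit swap/sign-change on the $\sM$- and $\sK$-components, one checks that the composite sends $\{u,v\}\mapsto\{v,u\}$, i.e. it is precisely the inversion $\wt A\mapsto\wt A^{-1}$ of linear relations. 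I expect this last graph computation to be the main obstacle — not conceptually hard, but one must keep the four coordinate blocks and the factors $-i,i$ from \eqref{ghtjha1} straight through three successive compositions; it is cleanest done in the $T$-picture, where $\breve A$ corresponds to $-T$ applied on the flipped decomposition $\sK\oplus\sM$, matching the transform that produces $\wt A^{-1}$ from $\{\{(I+T)h,(I-T)h\}\}$.
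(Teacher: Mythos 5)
Your skeleton is essentially the paper's: pass to $T=-I+2(I+\wt A)^{-1}$, resp.\ $\wh T=-I+2(I+\wh B)^{-1}=\wh J_\sM T$ via Lemma~\ref{TTlemma}, use the resolvent identities \eqref{connectresab}, \eqref{connectresab11} together with the Schur--Frobenius formula, convert to the Stieltjes/inverse Stieltjes picture with Lemma~\ref{TH1}, obtain \eqref{arep} and \eqref{opexpr3} from Theorem~\ref{inverse1}(2),(3), and finish with $\breve A=-\sJ_\sK\sJ_\sM(\wt A)=\wt A^{-1}$. However, two of your justifications fail as written. First, in part (1) the inverse Stieltjes property of $\cR$ cannot be extracted from inversion property (b) alone: writing $\cM_0(\lambda)=P_\sM(\wt A-\lambda I)^{-1}\uphar\sM$, one has $\cR(\lambda)=-\cM_0(\lambda)^{-1}-\lambda I$, and (b) only covers the inversion $-\cM_0^{-1}$; the additional shift $\cN(\lambda)\mapsto\cN(\lambda)-\lambda I$ does not preserve $\wt\cS^{(-1)}(\sM)$ (take $\cN\equiv -I$, then $\cN(x)-x>0$ for $x<-1$). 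The remark after \eqref{opexpr1B} that you cite is itself only a preview of this very statement, so leaning on it is circular. The missing ingredient is exactly what the paper supplies: the function $\Omega(z)=D+zC(I-zF)^{-1}C^*$ built from the selfadjoint contraction $T$ belongs to $\cRS(\sM)$ (by \cite{ArlHassi_2018}), and Lemma~\ref{TH1} then yields $\cR\in\wt\cS^{(-1)}(\sM)$ together with \eqref{aarep}; alternatively one can argue nonpositivity directly from $(\wt A-x)^{-1}\le|x|^{-1}I$ for $x<0$, but some such argument must be made, and your ``alternative'' route through \eqref{connectresab} gives only the analytic continuation, not the sign.

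Second, in part (2) the parenthetical ``$\Psi_-\in\cRS(\sM)$'' is false: $\Psi_-=-\Omega$ is a Schur function but in general not a Nevanlinna function, hence not in $\cRS(\sM)$. The function lying in $\cRS(\sM)$ is $\Omega$ itself, the transfer function of the selfadjoint contraction $T=\wh J_\sM\wh T$, and $\cQ$ must be defined by \eqref{formula1} with this $\Omega$; feeding $\Psi_-$ into \eqref{formula1} produces $-I+2(I+\Omega)^{-1}=(I-\Omega)(I+\Omega)^{-1}$, which is $\cQ^{-1}$, not $\cQ$, so the Stieltjes property of the family appearing in \eqref{brep} would not be established. (This is the same mechanism you use correctly for the identity $\cQ=-\cR^{-1}$: both families come from the one function $\Omega$ via \eqref{formula1} and \eqref{formula2}.) Two smaller points: nonnegativity of $\breve A$ cannot be ``read off'' from $-\cQ(1/\cdot)\in\wt\cS^{(-1)}(\sM)$, since a non-minimal representing relation need not be nonnegative and Theorem~\ref{obratno1} is proved after (and from) the present theorem; the valid argument is the one you also propose, namely $\breve A=\wt A^{-1}\ge0$. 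Finally, in your closing graph computation the chain must be $\wh A=\sJ_\sM\sP_\sM(\wh B)=\sJ_\sM(\wt A)$, so the composite acting on $\wt A$ is $-\sJ_\sK\sJ_\sM$, which indeed gives the inversion $\{h,h'\}\mapsto\{h',h\}$ up to the harmless scalar factor $-i$; with ``$\wh A=\sJ_\sM(\wh B)$'' as literally written, the composite $-\sJ_\sK\sJ_\sM\sP_\sM$ is not the inversion, so keep the extra $\sP_\sM$ (which cancels against $\wh B=\sP_\sM(\wt A)$) straight.
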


\begin{proof}
(1) Since $\wt A$ is selfadjoint and nonnegative the transform $T=-I+2(I+\wt A)^{-1}$ is a selfadjoint contraction. Decompose $T$ as in \eqref{blockT}.
Then the Schur-Frobenius formula \eqref{Sh-Fr1} for the resolvent of $T$ shows that, cf. \eqref{sch-fr},
\begin{equation}\label{sch-fr1}
P_\sM(I-zT)^{-1}\uphar\sM=(I_\sM-z\Omega(z))^{-1},\quad z\in\dC\setminus\left\{(-\infty,-1]\cup[1,+\infty)\right\},
\end{equation}
where
\[
\Omega(z)=D+zC(I-zF)^{-1}C^*,\quad z\in\dC\setminus\left\{(-\infty,-1]\cup[1,+\infty)\right\}.
\]
According to \cite{ArlHassi_2018} the function $\Omega(z)$ belongs to the class $\cRS(\sM)$;
Thus, by Lemma \ref{TH1} the transform
\[
\cR(\lambda)=I-2\left(I_\sM+\Omega\left(\cfrac{1+\lambda}{1-\lambda}\right)\right)^{-1},\quad \lambda\in \dC\setminus\dR_+,
\]
is an inverse Stieltjes family. Using \eqref{formula2} it is easy to check that
\begin{equation}\label{aarep2}
\left(\cR(\lambda)+\lambda I_\sM\right)^{-1}
=\cfrac{1}{\lambda-1}\left(I_\sM+\Omega\left(\frac{1+\lambda}{1-\lambda}\right)\right)
\left(I_\sM-\frac{1+\lambda}{1-\lambda}\Omega\left(\frac{1+\lambda}{1-\lambda}\right)\right)^{-1}.
 \end{equation}
On the other hand, if follows from \eqref{connectresab} that
\begin{multline}\label{compA2}
P_\sM(\wt A-\lambda
I)^{-1}\uphar\sM=\cfrac{1}{1-\lambda}P_\sM(T+I)\left(I-\frac{1+\lambda}{1-\lambda}T\right)^{-1}\uphar\sM\\
=\cfrac{1}{1-\lambda}\left(I_\sM+\Omega\left(\frac{1+\lambda}{1-\lambda}\right)\right)
\left(I_\sM-\frac{1+\lambda}{1-\lambda}\Omega\left(\frac{1+\lambda}{1-\lambda}\right)\right)^{-1},\quad \lambda\in\dC\setminus\dR_+.
\end{multline}
A comparison with \eqref{aarep2} gives the stated formula \eqref{aarep}.

(2) Consider the transform \eqref{ATtrans} of $\wh B$, $\wh T:=-I+2(I+\wh B)^{-1}$.
By Lemma \ref{TTlemma} $\wh T$ is contractive and $\wh J_\sM$-selfadjoint and
the operator $T:=\wh J_\sM \wh T$ is a selfadjoint contraction of the form \eqref{blockT},
while $\wh T=\wh J_\sM T$ has the form \eqref{BTtrans2}.
Define
\begin{equation}\label{transfer2}
 \Omega(z)=D+zC(I-zF)^{-1}C^*,\qquad \Psi(z)=-\Omega(z),\quad z\in\dC\setminus\left\{(-\infty,-1]\cup[1,+\infty)\right\}.
\end{equation}
Notice that $\Omega\in\cRS(\sM)$ and $\Psi$ are the transfer functions of the discrete-time passive linear systems
$\tau=\left\{ T,\sM,\sM,\sK\right\}$ and $\tau_-=\left\{ \wh T,\sM,\sM,\sK\right\}$ in Lemma \ref{TTlemma2} (see Appendix \ref{AppendA}).
Again by the Schur-Frobenius formula \eqref{Sh-Fr1} we have
\[
 P_\sM(I-z\wh T)^{-1}\uphar\sM=(I_\sM-z\Psi(z))^{-1}=(I_\sM+z\Omega(z))^{-1},\quad z\in\dD.
\]
On the other hand, the resolvent formula \eqref{connectresab11} implies that for all $\RE \lambda<0$ (cf. \eqref{compA2})
\begin{equation}\label{compA3}
\begin{array}{rl}
P_\sM(\wh B -\lambda I)^{-1}\uphar\sM
 &=\cfrac{1}{1-\lambda}P_\sM(\wh T+I)\left(I-\frac{1+\lambda}{1-\lambda}\wh T\right)^{-1}\uphar\sM\\
 &=\cfrac{1}{1-\lambda}\left(I_\sM+\Psi\left(\frac{1+\lambda}{1-\lambda}\right)\right)
    \left(I_\sM-\frac{1+\lambda}{1-\lambda}\Psi\left(\frac{1+\lambda}{1-\lambda}\right)\right)^{-1} \\
 &=\cfrac{1}{1-\lambda}\left(I_\sM-\Omega\left(\frac{1+\lambda}{1-\lambda}\right)\right)
    \left(I_\sM+\frac{1+\lambda}{1-\lambda}\Omega\left(\frac{1+\lambda}{1-\lambda}\right)\right)^{-1}.
\end{array}
\end{equation}
By Lemma \ref{TH1} the function $\cQ$ defined by
\[
\cQ(\lambda)=\left\{\left\{\left(I_\sM-\Omega\left(\cfrac{1+\lambda}{1-\lambda}\right)\right)h,
\left(I_\sM+\Omega\left(\cfrac{1+\lambda}{1-\lambda}\right)\right)h\right\}:\;
h\in\sM\right\},\quad\lambda\in \dC\setminus\dR_+,
\]
belongs to the Stieltjes class $\cS(\sM)$ and for all $\lambda\in\dC\setminus\dR_+$ one obtains
\[
\left(\cQ(\lambda)-\lambda I_\sM\right)^{-1}=\cfrac{1}{1-\lambda}\left(I_\sM-\Omega\left(\frac{1+\lambda}{1-\lambda}\right)\right)
\left(I_\sM+\frac{1+\lambda}{1-\lambda}\Omega\left(\frac{1+\lambda}{1-\lambda}\right)\right)^{-1}.
\]
A comparison with \eqref{compA3} leads to \eqref{brep}.

(3) Let $\wh B$, $\wh T$ and $T$ be as in the proof of item (3) and define
\[
\wt A=\left\{\left\{(I+ T)h,(I-T)h\right\}:\; h\in \sH\right\}.
\]
Then by Lemma~\ref{TTlemma} $\wt A$ is a nonnegative selfadjoint relation, which is connected to $\wh B$ by $\wh B=\sP_\sM(\wt A)$,
i.e., $\wt A=\sP_\sM(\wh B)$ and $\wh A=\sJ_\sM(\wt A)$.
As was proved in (1) the compressed resolvent $P_\sM(\wt A-\lambda I)^{-1}\uphar\sM$ admits the representation \eqref{aarep},
where the function $\cR$ is given by \eqref{formula2} in Lemma \ref{TH1}. On the other hand, the proof of (2) shows that
$P_\sM(\wh B-\lambda I)^{-1}\uphar\sM$ admits the representation \eqref{brep}, where the function $\cQ$ is given by \eqref{formula1}
in Lemma \ref{TH1}. Hence, $\cQ(\lambda)=-\cR^{-1}(\lambda)$ for all $\lambda\in \dC\setminus\dR_+$.
Since $\wh A=\sJ_\sM(\wt A)$ it follows from Theorem \ref{inverse1} that
\[
(\cQ(\lambda)+\lambda I_\sM)^{-1}=-P_\sM(\sJ_\sM(\wt A)-\lambda I)^{-1}\uphar\sM,\quad \lambda\in\cmr.
\]

(4) By assumption $\breve{A} =-\sJ_\sK(\wh A)$ and the proof of (3) shows that $\wh A=\sJ_\sM(\wt A)$, where $\wt A=\sP_\sM(\wh B)$
is a nonnegative selfadjoint relation in $\sH$. Thus,
\[
 \breve{A} =-\sJ_\sK(\wh A)=-\sJ_\sK\sJ_\sM(\wt A)=\wt A^{-1},
\]
and, in particular, $\breve{A}$ is a nonnegative selfadjoint relation in $\sH$.
Moreover, the formula \eqref{opexpr3} follows from Theorem \ref{inverse1} and \eqref{arep}.

The last assertion is clear from the arguments used above to prove (3) and (4).
\end{proof}

The next theorem shows that all inverse Stieltjes families $\cR\in\wt\cS^{(-1)}(\sM)$ can be characterized by the statement (1) in Theorem \ref{TH0}.

\begin{theorem}
\label{obratno1}
Let $\cR$ belong to the inverse
Stieltjes class in $\sM$. Then there exists up to unitary
equivalence a unique nonnegative selfadjoint relation $\wt A$ in the Hilbert space
$\sM\oplus\sK$ such that $\wt A$ is $\sM$-minimal and the relation
\begin{equation}
\label{opexprstiel}
P_\sM\left(\wt A-\lambda I\right)^{-1}\uphar\sM=-\left(\cR(\lambda)+\lambda I_\sM\right)^{-1},\quad \lambda\in\dC\setminus\dR_+
\end{equation}
holds.
\end{theorem}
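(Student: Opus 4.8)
The statement is the converse of Theorem~\ref{TH0}(1), and the plan is to build $\wt A$ by passing through the combined Nevanlinna--Schur class $\cRS(\sM)$ and its realization theory. First I would apply the converse part of Lemma~\ref{TH1}: since $\cR\in\wt\cS^{(-1)}(\sM)$, there is a function $\Omega\in\cRS(\sM)$ for which the representation \eqref{formula2} holds. Next I would invoke the realization results for $\cRS(\sM)$ recorded in Appendix~\ref{AppendA} (from \cite{ArlHassi_2018}): every $\Omega\in\cRS(\sM)$ is the transfer function $\Omega(z)=D+zC(I-zF)^{-1}C^*$ of a \emph{simple} passive \emph{selfadjoint} discrete-time system $\tau=\{T,\sM,\sM,\sK\}$, where
\[
 T=\begin{bmatrix} D&C\cr C^*&F\end{bmatrix}:\begin{array}{l}\sM\\\oplus\\\sK\end{array}\to \begin{array}{l}\sM\\\oplus\\\sK\end{array}
\]
is a selfadjoint contraction on $\sH=\sM\oplus\sK$, and such a simple realization is unique up to a unitary similarity that is the identity on the input/output space $\sM$.

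Given such a $T$, I would define $\wt A$ by \eqref{Atrans}, i.e.
\[
 \wt A=\left\{\left\{(I+T)h,(I-T)h\right\}:\;h\in\sH\right\},
\]
which is the value of the involutive transform \eqref{ATtrans} at $T$. By Lemma~\ref{TTlemma} the relation $\wt A$ is nonnegative and selfadjoint, and by Lemma~\ref{TTlemma2} it is $\sM$-minimal precisely because $\tau$ was chosen simple. Applying Theorem~\ref{TH0}(1) to this $\wt A$ yields
\[
 P_\sM(\wt A-\lambda I)^{-1}\uphar\sM=-\left(\cR_\tau(\lambda)+\lambda I_\sM\right)^{-1},\quad\lambda\in\dC\setminus\dR_+,
\]
where $\cR_\tau$ is the inverse Stieltjes family assigned by formula \eqref{formula2} to the transfer function $\Omega$ of $\tau$; since $\Omega$ was produced from $\cR$ through the very same formula, $\cR_\tau=\cR$, and \eqref{opexprstiel} follows.

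For uniqueness, suppose $\wt A^{(1)}$ and $\wt A^{(2)}$, acting in $\sM\oplus\sK^{(1)}$ and $\sM\oplus\sK^{(2)}$, are nonnegative selfadjoint $\sM$-minimal relations both satisfying \eqref{opexprstiel} with the given $\cR$. Forming the selfadjoint contractions $T^{(j)}=-I+2(I+\wt A^{(j)})^{-1}$ and decomposing them as in \eqref{blockT}, the computation \eqref{compA2} expresses the compressed resolvent $P_\sM(\wt A^{(j)}-\lambda I)^{-1}\uphar\sM$ as an explicit, injective Möbius-type function of $\Omega^{(j)}\!\left(\frac{1+\lambda}{1-\lambda}\right)$, where $\Omega^{(j)}(z)=D^{(j)}+zC^{(j)}(I-zF^{(j)})^{-1}C^{(j)*}$ is the transfer function of the passive selfadjoint system $\tau^{(j)}=\{T^{(j)},\sM,\sM,\sK^{(j)}\}$. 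Since both relations yield the same compressed resolvent $-(\cR(\lambda)+\lambda I_\sM)^{-1}$, we get $\Omega^{(1)}=\Omega^{(2)}=:\Omega$; and by Lemma~\ref{TTlemma2} each $\tau^{(j)}$ is simple because $\wt A^{(j)}$ is $\sM$-minimal. Hence $\tau^{(1)}$ and $\tau^{(2)}$ are simple passive selfadjoint realizations of the same $\Omega$, so by the uniqueness recalled above there is a unitary $\cV\colon\sK^{(1)}\to\sK^{(2)}$ with $I_\sM\oplus\cV$ intertwining $T^{(1)}$ and $T^{(2)}$; since $\wt A^{(j)}$ is recovered from $T^{(j)}$ by \eqref{Atrans}, the same $I_\sM\oplus\cV$ intertwines $\wt A^{(1)}$ and $\wt A^{(2)}$, which is the asserted unitary equivalence.

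The only genuinely substantial ingredient is the selfadjoint realization theory for $\cRS(\sM)$ — that a function in this class admits a simple passive \emph{selfadjoint} system realization, unique up to unitary similarity over $\sM$ — which is exactly what Appendix~\ref{AppendA} and \cite{ArlHassi_2018} supply; everything else is a routine assembly of Lemma~\ref{TH1}, Lemma~\ref{TTlemma}, Lemma~\ref{TTlemma2} and Theorem~\ref{TH0}(1).
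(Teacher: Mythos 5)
Your proposal is correct and follows essentially the same route as the paper's proof: Lemma~\ref{TH1} to pass from $\cR$ to a function $\Omega\in\cRS(\sM)$, realization of $\Omega$ as the transfer function of a simple passive selfadjoint system, the transform \eqref{Atrans} combined with Lemmas~\ref{TTlemma} and~\ref{TTlemma2} to get a nonnegative selfadjoint $\sM$-minimal $\wt A$, and Theorem~\ref{TH0}(1) to produce \eqref{opexprstiel}. The only difference is that for uniqueness the paper simply refers back to the discussion in the Introduction (the $\sM$-minimal representation result of \cite{DHMS06}), whereas you derive it directly from the uniqueness, up to unitary similarity fixing $\sM$, of simple passive selfadjoint realizations of $\Omega$; this is an equally valid and slightly more self-contained way to close that step.
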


\begin{proof}
Let $\cR\in \wt\cS^{(-1)}(\sM)$. Then according to Lemma \ref{TH1} the operator valued function
\[
\Omega(z):=-I+2\left(I_\sM-\cR\left(\cfrac{z-1}{1+z}\right)\right)^{-1},
\quad z\in\dC\setminus\left\{(-\infty,-1]\cup[1,+\infty)\right\},
\]
is from the class $\cRS(\sM)$. By \cite[Theorem 4.3]{ArlHassi_2015}
there exists up to the unitary equivalence a unique simple passive
selfadjoint system
$\tau=\left\{ T,\sM,\sM,\sK\right\}$, where $T$ is a selfadjoint contraction,
such that the transfer function of $\tau$ coincides with $\Omega(z)$.
Then the linear fractional transformation
\[
\wt A=-I+2(I+T)^{-1}=\left\{\{(I+T)h,(I-T)h\}:\; h\in\sM\oplus\sK\right\}
\]
is a nonnegative selfadjoint relation in $\sM\oplus\sK$.
Since the system $\tau$ is simple, $\wt A$ is $\sM$-minimal by Lemma \ref{TTlemma2}.
Now it is clear that the formulas \eqref{sch-fr1}, \eqref{aarep2}, and \eqref{compA2} hold
and thus the formula \eqref{opexprstiel} is obtained from Theorem \ref{TH0}.
For the uniqueness of $\wt A$ see the discussion given in Introduction.
\end{proof}

For Stieltjes families $\cR\in\wt\cS(\sM)$ we have the following characterizations.

\begin{theorem}\label{stieltacc}
Let $\cQ$ belong to the Stieltjes class in $\sM$. Then the following statements hold:
 \begin{enumerate}
 \item There exists up to the unitary equivalence a unique maximal accretive linear relation $\wh B$ in the Hilbert space $\sM\oplus\sK$ such that $\wh B$ is $\wh J_\sM$-selfadjoint w.r.t. the fundamental symmetry $\wh J_\sM$ in \eqref{krefund}, such that $\wh B$ is $\sM$-minimal and the relation
\begin{equation}
\label{opexprstielacc}
 P_\sM\left(\wh B-\lambda I\right)^{-1}\uphar\sM=\left(\cQ(\lambda)-\lambda I_\sM\right)^{-1}
 \end{equation}
holds for all $\RE\lambda<0$.
\item There exists up to the unitary equivalence a unique selfadjoint relation $\wh A$ in $\sM\oplus\sK$, such that $\wh A$ is $\sM$-minimal and its transform $\sJ_\sM(\wh A)$ is nonnegative, and the relation
\[
P_\sM\left(\wh A-\lambda I\right)^{-1}\uphar\sM=-\left(\cQ(\lambda)+\lambda I_\sM\right)^{-1}
\]
holds for all $\lambda\in\cmr$.
\end{enumerate}
Furthermore, one can choose
\[
\wh A=\sJ_\sM\sP_\sM(\wh B).
\]
\end{theorem}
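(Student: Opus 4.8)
The plan is to deduce both statements from the material already developed, principally Lemma~\ref{TH1}, Lemma~\ref{TTlemma}, Lemma~\ref{TTlemma2}, and Theorem~\ref{TH0}, by running the construction used in Theorem~\ref{obratno1} but starting from the inverse Stieltjes family $\cR:=-\cQ^{-1}\in\wt\cS^{(-1)}(\sM)$ (recall from Lemma~\ref{TH1} that $\cQ\in\wt\cS(\sM)\iff-\cQ^{-1}\in\wt\cS^{(-1)}(\sM)$). First, by Lemma~\ref{TH1} applied to $\cQ$ there is a function $\Omega\in\cRS(\sM)$ with $\cQ(\lambda)=\left\{\left\{\left(I_\sM-\Omega\left(\tfrac{1+\lambda}{1-\lambda}\right)\right)h,\left(I_\sM+\Omega\left(\tfrac{1+\lambda}{1-\lambda}\right)\right)h\right\}:h\in\sM\right\}$. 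By \cite[Theorem~4.3]{ArlHassi_2015} (as already invoked in the proof of Theorem~\ref{obratno1}) there exists, uniquely up to unitary equivalence, a simple passive selfadjoint system $\tau=\{T,\sM,\sM,\sK\}$ with selfadjoint contraction $T=\begin{bmatrix}D&C\\C^*&F\end{bmatrix}$ whose transfer function equals $\Omega$. Set $\wt A:=-I+2(I+T)^{-1}$, which is a nonnegative selfadjoint relation in $\sM\oplus\sK$ that is $\sM$-minimal (Lemma~\ref{TTlemma2}, since $\tau$ is simple), and put $\wh B:=\sP_\sM(\wt A)$ and $\wh A:=\sJ_\sM\sP_\sM(\wh B)=\sJ_\sM(\wt A)$.

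For part (1), Lemma~\ref{TTlemma} shows that $\wh B$ is $\wh J_\sM$-selfadjoint and maximal accretive, and that $\wh T:=-I+2(I+\wh B)^{-1}=\wh J_\sM T$; moreover $\wh B$ is $\sM$-minimal by Lemma~\ref{TTlemma2} (equivalence of (i) and (ii), or equivalently of (iii) and (iv) since $\tau$ is simple). Then formula \eqref{brep} of Theorem~\ref{TH0}(2), applied to this $\wh B$, yields $P_\sM(\wh B-\lambda I)^{-1}\uphar\sM=(\cQ'(\lambda)-\lambda I_\sM)^{-1}$ for $\RE\lambda<0$, where $\cQ'$ is built from the same $\Omega=D+zC(I-zF)^{-1}C^*$ via \eqref{formula1}; hence $\cQ'=\cQ$ and \eqref{opexprstielacc} holds. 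For part (2), $\wh A=\sJ_\sM(\wt A)$ is selfadjoint by Proposition~\ref{ghblev1}(i) and $\sM$-minimal by Proposition~\ref{ghblev1}(ii); its transform $\sJ_\sM(\wh A)=\sJ_\sM^2(\wt A)=\wt A$ is nonnegative by construction. Theorem~\ref{TH0}(3) then gives $P_\sM(\wh A-\lambda I)^{-1}\uphar\sM=-(\cQ(\lambda)+\lambda I_\sM)^{-1}$ for $\lambda\in\cmr$, with the same $\cQ$. The final displayed identity $\wh A=\sJ_\sM\sP_\sM(\wh B)$ holds because $\sP_\sM$ and $\sJ_\sM$ are involutions, so $\sP_\sM(\wh B)=\sP_\sM\sP_\sM(\wt A)=\wt A$ and $\sJ_\sM(\wt A)=\wh A$.

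The remaining point is uniqueness up to unitary equivalence in each part. For (1): if $\wh B_1$ is any $\wh J_\sM$-selfadjoint maximal accretive $\sM$-minimal relation satisfying \eqref{opexprstielacc}, then $\wt A_1:=\sP_\sM(\wh B_1)$ is, by Proposition~\ref{ghblev}(iii)--(iv) and Lemma~\ref{TTlemma}, a nonnegative selfadjoint $\sM$-minimal relation; from $\wh B_1=\sP_\sM(\wt A_1)$ and \eqref{opexprstielacc} together with the last assertion of Theorem~\ref{TH0} ($\cQ=-\cR^{-1}$) one recovers \eqref{opexprstiel} with the inverse Stieltjes family $\cR=-\cQ^{-1}$, so $\wt A_1$ is unitarily equivalent to the relation furnished by Theorem~\ref{obratno1}; since $\sP_\sM$ intertwines unitary equivalences (being defined blockwise and acting as the identity on the $\sM$-component), $\wh B_1$ is unitarily equivalent to $\wh B$. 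For (2): if $\wh A_1$ is selfadjoint, $\sM$-minimal, with $\sJ_\sM(\wh A_1)$ nonnegative and the stated compressed-resolvent formula, then $\wt A_1:=\sJ_\sM(\wh A_1)$ is nonnegative selfadjoint and $\sM$-minimal (Proposition~\ref{ghblev1}), and by Theorem~\ref{inverse1}(2) the compressed resolvent of $\wh A_1=\sJ_\sM(\wt A_1)$ encodes $-\cM^{-1}$ where $\cM$ is the Nevanlinna family realized by $\wt A_1$; matching with the given formula forces $\cM=-\cQ^{-1}\in\wt\cS^{(-1)}(\sM)$, so $\wt A_1$ coincides (up to unitary equivalence) with the relation from Theorem~\ref{obratno1}, whence $\wh A_1$ is unitarily equivalent to $\wh A=\sJ_\sM(\wt A)$. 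The main obstacle is purely bookkeeping: keeping straight which transform (the involution $\sP_\sM$ in \eqref{ghtjha21}, the involution $\sJ_\sM$ in \eqref{ghtjha1}) connects which pair among $\wt A$, $\wh A$, $\wh B$, and checking that each transform passes $\sM$-minimality and unitary equivalence through, so that uniqueness is genuinely inherited from Theorem~\ref{obratno1}; all the analytic content is already packaged in Lemma~\ref{TH1} and Theorem~\ref{TH0}.
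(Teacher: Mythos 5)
Your proposal is correct and follows essentially the same route as the paper: apply Lemma~\ref{TH1} to $\cQ$ to get $\Omega\in\cRS(\sM)$, realize $\Omega$ by a simple passive selfadjoint system via \cite[Theorem 4.3]{ArlHassi_2015}, pass to $\wt A$, $\wh B=\sP_\sM(\wt A)$, $\wh A=\sJ_\sM(\wt A)$ through Lemmas~\ref{TTlemma}--\ref{TTlemma2}, and read off the resolvent formulas from Theorem~\ref{TH0}(2)--(3). Your uniqueness argument merely spells out in detail (via the intertwining of $\sP_\sM$, $\sJ_\sM$ with unitary equivalence and reduction to Theorem~\ref{obratno1}) what the paper asserts directly from $\sM$-minimality, so there is no substantive difference.
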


\begin{proof}
Let $\cR\in \wt\cS(\sM)$. Then according to Lemma \ref{TH1} the operator valued function
\[
\Omega(z):=I-2\left(I_\sM+\cQ\left(\cfrac{z-1}{1+z}\right)\right)^{-1},\quad z\in\dC\setminus\left\{(-\infty,-1]\cup[1,+\infty)\right\},
\]
belongs to the class $\cRS(\sM)$. Again by \cite[Theorem 4.3]{ArlHassi_2015}
there exists up to the unitary equivalence a unique simple passive
selfadjoint system
$\tau=\left\{ T,\sM,\sM,\sK\right\}$, whose transfer function coincides with $\Omega(z)$.
Here $T$ is a selfadjoint contraction of the form \eqref{blockT} and the associated operator
$\wh T:=\wh J_\sM T$ is a $\wh J_\sM$-selfadjoint contraction in $\sH$ of the form \eqref{BTtrans2}.
The corresponding discrete-time linear system $\tau_-$ appearing in Lemma \ref{TTlemma2}
is also simple and it has the transfer function
\[
 \Psi_{\tau_-}(z)=-\Omega(z),\quad z\in \dC\setminus\left\{(-\infty,-1]\cup[1,+\infty)\right\},
\]
cf. \eqref{transfer2}. By Lemma \ref{TTlemma} the transform $\wt A$ of $T$ defined by \eqref{Atrans} is nonnegative and selfadjoint
and the transform $\wh B$ defined by \eqref{BTtrans} is maximal accretive and $\wh J_\sM$-selfadjoint, and
$\wt A$ and $\wh B$ are connected by $\wh B=\sP_\sM(\wt A)$.
Moreover, by Lemma \ref{TTlemma2} $\wt A$ and $\wh B$ are $\sM$-minimal.
Since we are now in the setting used to prove part (2) of Theorem \ref{TH0} the representation
\eqref{opexprstielacc} is obtained from \eqref{brep}.
The stated uniqueness property of $\wh B$ is a consequence of its $\sM$-minimality.
This completes the proof of the first statement (1).

To prove the second statement (2) consider the transform $\wh A=\sJ_ \sM(\wt A)=\sJ_ \sM\sP_ \sM(\wh B)$.
Then we may apply part (3) of Theorem \ref{TH0}; the stated resolvent formula is immediate from \eqref{arep}.
According to Proposition \ref{ghblev1} $\wh A$ is $\sM$-minimal, since $\wt A$ is $\sM$-minimal,
and this implies the uniqueness property of $\wh A$.
\end{proof}

Next some further representations for Stieltjes and inverse Stieltjes families will be
established by means of some specific transformation properties that these families obey.
The basic properties of scalar Stieltjes and inverse Stieltjes functions can be found in \cite{KacK}.
The next lemma is an extension of \cite[Lemma~S1.5.2, Theorem~S1.5.3]{KacK}
from the scalar case to our present general setting.

\begin{lemma}\label{invthm}
With $\lambda\in \dC\setminus \dR_+$ the following assertions are equivalent:
\begin{enumerate}
\def\labelenumi{\rm (\roman{enumi})}
\item $\cQ(\lambda)\in \wt\cS(\sM)$;
\item $-\cQ^{-1}(\lambda) \in \wt\cS^{(-1)}(\sM)$;
\item $\lambda\cQ(\lambda)\in \wt\cS^{(-1)}(\sM)$.
\end{enumerate}
\end{lemma}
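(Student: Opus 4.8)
The plan is to obtain (i)$\Leftrightarrow$(ii) for free and to reduce (i)$\Leftrightarrow$(iii) to a statement inside the combined Nevanlinna--Schur class $\cRS(\sM)$. Indeed (i)$\Leftrightarrow$(ii) is exactly the last assertion of Lemma~\ref{TH1} (equivalently, the special case of property~(b) stated after Definition~\ref{invStieltjes}). For the remaining equivalence, assume first that $\cQ\in\wt\cS(\sM)$. By Lemma~\ref{TH1} there is $\Omega\in\cRS(\sM)$ for which $\cQ$ has the form \eqref{formula1}, with $z=\tfrac{1+\lambda}{1-\lambda}$; since $\cRS(\sM)$ consists exactly of the transfer functions of passive selfadjoint systems \cite{ArlHassi_2018}, the Schur--Frobenius identity \eqref{sch-fr1} gives $(I_\sM-z\Omega(z))^{-1}=P_\sM(I-zT)^{-1}\uphar\sM$, where $T$ is the selfadjoint contraction realizing $\Omega$; in particular $I_\sM-z\Omega(z)$ is boundedly invertible on $\dC\setminus\{(-\infty,-1]\cup[1,+\infty)\}$. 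Using $\lambda=\tfrac{z-1}{z+1}$, a short computation shows that a generic element $\{f,g\}=\{(I-\Omega(z))h,\,\lambda(I+\Omega(z))h\}$ of $\lambda\cQ(\lambda)$ obeys $f-g=\tfrac{2}{z+1}(I-z\Omega(z))h$ and $f+g=\tfrac{2}{z+1}(zI-\Omega(z))h$, whence
\[
 \lambda\cQ(\lambda)=\left\{\left\{(I+\Theta(z))k,\,(\Theta(z)-I)k\right\}:k\in\sM\right\},\qquad
 \Theta(z):=\bigl(zI-\Omega(z)\bigr)\bigl(I-z\Omega(z)\bigr)^{-1}
\]
($k$ running over $\sM$ because $I-z\Omega(z)$ is invertible). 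Thus, by the description of $\wt\cS^{(-1)}(\sM)$ via \eqref{formula2}, it suffices to prove $\Theta\in\cRS(\sM)$; note that $\Theta(z)$ is obtained from $\Omega(z)$ by the linear-fractional transform $w\mapsto(zI-w)(I-zw)^{-1}$.

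To verify $\Theta\in\cRS(\sM)$ I would check the three conditions of Definition~\ref{rsm}. Holomorphy on $\dC\setminus\{(-\infty,-1]\cup[1,+\infty)\}$ follows from $(I_\sM-z\Omega(z))^{-1}=P_\sM(I-zT)^{-1}\uphar\sM$ and $\sigma(T)\subset[-1,1]$. For $x\in(-1,1)$ one has $\Theta(x)=\psi_x(\Omega(x))$ via the functional calculus of the selfadjoint contraction $\Omega(x)$, where $\psi_x(s)=\tfrac{x-s}{1-xs}$ maps $[-1,1]$ onto $[-1,1]$, so that $-I\le\Theta(x)\le I$. The Nevanlinna symmetry $\Theta(\bar z)=\Theta(z)^*$ is immediate from $\Omega(\bar z)=\Omega(z)^*$ and the commutativity of $zI-\Omega(z)$ with $I-z\Omega(z)$. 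Hence the only substantial point left is $\IM\Theta(z)\ge0$ for $\IM z>0$.

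This last inequality is the heart of the matter. I would first rewrite $\Theta(z)=\tfrac1zI_\sM+\tfrac{z^2-1}{z}\,P_\sM B\uphar\sM$ with $B:=(I-zT)^{-1}$ (using $zI-\Omega(z)=(z-\tfrac1z)I+\tfrac1z(I-z\Omega(z))$), then insert the elementary identities $\RE B=BB^*(I-(\RE z)T)$ and $\IM B=(\IM z)\,BB^*T$, and finally apply the algebraic identity
\[
 \bigl(1+|z|^{-2}\bigr)I-2|z|^{-2}(\RE z)\,T=|z|^{-2}(I-zT)(I-\bar zT)+D_T^2,\qquad D_T=(I-T^2)^{1/2}.
\]
Since $BB^*(I-zT)(I-\bar zT)=I$ and $BB^*D_T^2=(BD_T)(BD_T)^*$, these ingredients collapse the expression for $\IM\Theta(z)$ down to $\IM\Theta(z)=(\IM z)\,(P_\sM BD_T)(P_\sM BD_T)^*\ge0$. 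Therefore $\Theta\in\cRS(\sM)$, and Lemma~\ref{TH1} yields $\lambda\cQ(\lambda)\in\wt\cS^{(-1)}(\sM)$, which proves (i)$\Rightarrow$(iii).

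For the converse (iii)$\Rightarrow$(i) I would exploit that, for each fixed $z$, $w\mapsto(zI-w)(I-zw)^{-1}$ is an involution on the operator unit ball. If $\lambda\cQ(\lambda)\in\wt\cS^{(-1)}(\sM)$ is represented through \eqref{formula2} by $\Theta\in\cRS(\sM)$, put $\Omega(z):=(zI-\Theta(z))(I-z\Theta(z))^{-1}$; here $I-z\Theta(z)$ is invertible on the cut plane and $I-z\Omega(z)=(1-z^2)(I-z\Theta(z))^{-1}$, and the argument above (with $\Theta$ in place of $\Omega$) shows $\Omega\in\cRS(\sM)$. Letting $\cQ_0$ be the Stieltjes family produced from $\Omega$ via \eqref{formula1}, the computation of the first paragraph represents $\lambda\cQ_0(\lambda)$ through \eqref{formula2} by $(zI-\Omega(z))(I-z\Omega(z))^{-1}=\Theta(z)$, so $\lambda\cQ_0(\lambda)=\lambda\cQ(\lambda)$, and cancelling the nonzero factor $\lambda$ gives $\cQ=\cQ_0\in\wt\cS(\sM)$. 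The step I expect to be the main obstacle is exactly the inequality $\IM\Theta(z)\ge0$ for $\IM z>0$: everything hinges on spotting the displayed algebraic identity, which exhibits the relevant selfadjoint operator as $|z|^{-2}I$ plus a manifest square; once that is in hand the remaining verifications are routine manipulations with the transforms of Lemma~\ref{TH1} and the Schur--Frobenius formula \eqref{sch-fr1}.
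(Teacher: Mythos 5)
Your proposal is correct, and its skeleton coincides with the paper's: pass from $\cQ$ to $\Omega\in\cRS(\sM)$ via Lemma~\ref{TH1}, observe that multiplication by $\lambda$ corresponds on the $\cRS$-side to the transform $\Omega\mapsto\Upsilon(z)=(zI-\Omega(z))(I-z\Omega(z))^{-1}$ (your computation of $f\pm g$ and the resulting \eqref{formula2}-type representation of $\lambda\cQ(\lambda)$ is exactly the paper's ``straightforward calculation''), and then conclude with Lemma~\ref{TH1} again. The genuine difference is in how the key fact $\Upsilon\in\cRS(\sM)$ is obtained: the paper simply cites \cite[Theorem~4.1]{ArlHassi_2018} (item (iv) of Theorem~\ref{newchar}, formula \eqref{formula3}), whereas you re-prove it, using the passive selfadjoint realization $T$ of $\Omega$, the Schur--Frobenius identity \eqref{sch-fr1}, the identities $\RE B=BB^*(I-(\RE z)T)$ and $\IM B=(\IM z)BB^*T$ for $B=(I-zT)^{-1}$, and the algebraic identity $(1+|z|^{-2})I-2|z|^{-2}(\RE z)T=|z|^{-2}(I-zT)(I-\bar zT)+D_T^2$; I checked these and the cancellation of the $\IM(1/z)$ term, and they do collapse to $\IM\Theta(z)=(\IM z)\,(P_\sM BD_T)(P_\sM BD_T)^*\ge 0$, so your argument is a valid self-contained proof of the cited invariance (modulo the realization theorem for $\cRS(\sM)$, which the paper's Appendix~\ref{AppendA} invokes anyway). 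The converse is also handled differently: the paper bootstraps, applying the already proven (i)$\Leftrightarrow$(ii) and (i)$\Rightarrow$(iii) to $-\left(\lambda\cQ(\lambda)\right)^{-1}=-\cQ^{-1}(\lambda)/\lambda$ and then returning to $\cQ$, while you exploit that $w\mapsto(zI-w)(I-zw)^{-1}$ is an involution on $\cRS(\sM)$ to reconstruct $\Omega$ from $\Theta$ and cancel the nonzero factor $\lambda$. Both routes are sound; the paper's is shorter because it leans on the external result and on the implications already established, while yours buys independence from \cite[Theorem~4.1]{ArlHassi_2018} at the cost of the explicit defect-operator computation.
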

\begin{proof}
The equivalence of (i) and (ii) was proved in Lemma \ref{TH1} (see also Introduction).
We use the same approach to prove the equivalence of (i) and (iii).

Let the functions $\cQ\in\wt \cS(\sM)$ and $\Omega\in\cRS(\sM)$
be connected by \eqref{formula1}. Then a straightforward calculation shows that
\[
 \lambda\cQ(\lambda)=\left\{\left\{\left(I+\Upsilon\left(\cfrac{1+\lambda}{1-\lambda}\right)\right)h,
\left(\Upsilon\left(\cfrac{1+\lambda}{1-\lambda}\right)-I\right)h\right\}:\, h\in\sM\right\},
\quad \lambda\in \dC\setminus\dR_+,
\]
where $\Upsilon(z)$ is given by the linear fractional transformation
\[
 \Upsilon(z)=\left(zI-\Omega(z)\right)\left(I-z\Omega(z)\right)^{-1},\quad
 z\in\dC\setminus\{(-\infty,-1]\cup[1,+\infty)\}.
\]
As shown in \cite[Theorem~4.1]{ArlHassi_2018} the function $\Upsilon(z)$ (together with $\Omega(z)$)
belongs to the class $\cRS(\sM)$; see also \eqref{formula3} in Appendix \ref{AppendA}.
Therefore, by Lemma~\ref{TH1} $\lambda\cQ(\lambda)$ is an inverse Stieltjes family.
Hence, (i)$\Longrightarrow$(iii).

Conversely, let $\lambda\cQ(\lambda)\in \wt \cS^{(-1)}(\sM)$. By applying the proven implications we get
\[
\begin{array}{rl}
 & -\left(\lambda\cQ(\lambda)\right)^{-1} = -\cfrac{\cQ^{-1}(\lambda)}{\lambda}\in \wt \cS(\sM)   \\
 & \Longrightarrow \quad \lambda \left(-\lambda\cQ(\lambda)\right)^{-1}=-\cQ^{-1}(\lambda)\in \wt \cS^{(-1)}(\sM) \\
 & \Longrightarrow \quad \cQ(\lambda)\in \wt\cS(\sM),
\end{array}
\]
which proves the implication (iii)$\Longrightarrow$(i).
\end{proof}

Notice that Lemma \ref{invthm} gives also the equivalence
\[
 \cR(\lambda)\in\wt\cS^{(-1)}(\sM) \iff \frac{\cR(\lambda)}{\lambda} \in\wt\cS(\sM).
\]
One concludes that every function $\cR(\lambda)\in \wt\cS^{(-1)}(\sM)$ is of the form $\cR(\lambda)=\lambda\cQ(\lambda)$ for some
$\cQ(\lambda)\in \wt\cS(\sM)$. Similarly every function
$\cQ(\lambda)\in \wt\cS(\sM)$ is of the form $\cQ(\lambda)=\frac{\cR(\lambda)}{\lambda}$ for some
$\cR(\lambda)\in \wt\cS^{(-1)}(\sM)$. \\

We are now ready to state the following further characterizations for the Stieltjes and inverse Stieljes families.

\begin{theorem}\label{zwanzig}
The following representations hold:
\begin{enumerate}
  \item  Let $\cQ\in\wt \cS(\sM)$.
Then there is a Hilbert space $\sH=\sM\oplus\sK$ and up to unitary equivalence a unique $\sM$-minimal nonnegative selfadjoint relation $\wt A$ in $\sH$ such that
\begin{equation}\label{einundzwan}
\cQ(\lambda)=-\cfrac{1}{\lambda}\left(P_\sM(\wt A-\lambda I)^{-1}\uphar\sM\right)^{-1}-I_\sM,\quad \lambda\in\dC\setminus\dR_+.
\end{equation}

  \item Let $\cR\in\wt \cS^{(-1)}(\sM)$. Then there is a Hilbert space $\sH=\sM\oplus\sK$ and up to unitary equivalence a unique $\sM$-minimal nonnegative selfadjoint relation $\wt B$ in $\sH$ such that
\begin{equation}\label{einundzwan2}
\cR(\lambda)=I_\sM -\left(P_\sM( I-\lambda\wt B)^{-1}\uphar\sM\right)^{-1},\quad \lambda\in\dC\setminus\dR_+.
\end{equation}
\end{enumerate}
Moreover, if $\cQ(\lambda)\in\wt\cS(\sM)$ is represented by means of $\wt A$ in \eqref{einundzwan}, then $-\cQ(\lambda)^{-1}$ admits the representation \eqref{einundzwan2} by means of $\wt B =\wt A^{-1}$.
\end{theorem}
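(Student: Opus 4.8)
The plan is to reduce both parts to Theorem~\ref{obratno1} by combining it with the inversion equivalences of Lemma~\ref{invthm} and the resolvent identity \eqref{invres}.

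\textbf{Part (1).} First I would record that, by Lemma~\ref{invthm} and the remark following it, the assignment $\cQ(\lambda)\mapsto \cR_\cQ(\lambda):=\lambda\cQ(\lambda)$ is a bijection of $\wt\cS(\sM)$ onto $\wt\cS^{(-1)}(\sM)$ with inverse $\cR(\lambda)\mapsto\cR(\lambda)/\lambda$. Applying Theorem~\ref{obratno1} to the inverse Stieltjes family $\cR_\cQ$ yields, uniquely up to unitary equivalence, an $\sM$-minimal nonnegative selfadjoint relation $\wt A$ in some $\sH=\sM\oplus\sK$ with
\[
 P_\sM(\wt A-\lambda I)^{-1}\uphar\sM=-(\cR_\cQ(\lambda)+\lambda I_\sM)^{-1}=-\frac{1}{\lambda}\,(\cQ(\lambda)+I_\sM)^{-1},\qquad \lambda\in\dC\setminus\dR_+ .
\]
Inverting this relation and rearranging gives \eqref{einundzwan}; uniqueness of $\wt A$ follows from the uniqueness in Theorem~\ref{obratno1} together with the bijectivity of $\cQ\mapsto\cR_\cQ$.

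\textbf{The ``moreover'' statement, and Part (2).} Keep $\cQ\in\wt\cS(\sM)$ and the associated $\wt A$ from Part~(1), and put $\wt B:=\wt A^{-1}$ and $\cR:=-\cQ^{-1}\in\wt\cS^{(-1)}(\sM)$ (property~(b) of the Introduction). Since inversion preserves nonnegativity and selfadjointness, and since $\wt A^{-1}$ is exactly the relation $\breve A$ appearing in Theorem~\ref{inverse1}\,(3) and Theorem~\ref{TH0}\,(4), which is $\sM$-minimal precisely when $\wt A$ is (this is also immediate from \eqref{invres}), the relation $\wt B$ is again $\sM$-minimal, nonnegative and selfadjoint. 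Applying \eqref{invres} with $H=\wt A$ and $z=\lambda$ yields the clean identity $(I-\lambda\wt A^{-1})^{-1}=I+\lambda(\wt A-\lambda I)^{-1}$ for $\lambda\in\dC\setminus\dR_+$ (here $\lambda\in\rho(\wt A)$ and $1/\lambda\in\rho(\wt A^{-1})$ because $\wt A\ge 0$). Compressing to $\sM$ and inserting \eqref{einundzwan} gives $P_\sM(I-\lambda\wt B)^{-1}\uphar\sM=I_\sM-(\cQ(\lambda)+I_\sM)^{-1}$. An elementary relation computation shows $\bigl(I_\sM-(\cQ(\lambda)+I_\sM)^{-1}\bigr)^{-1}=I_\sM+\cQ(\lambda)^{-1}=I_\sM-\cR(\lambda)$, which is precisely \eqref{einundzwan2}. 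For Part~(2), given an arbitrary $\cR\in\wt\cS^{(-1)}(\sM)$ one sets $\cQ:=-\cR^{-1}\in\wt\cS(\sM)$, takes the $\wt A$ supplied by Part~(1), and then $\wt B=\wt A^{-1}$ does the job by the preceding lines. For uniqueness: if $\wt B'$ is any $\sM$-minimal nonnegative selfadjoint relation satisfying \eqref{einundzwan2} for this $\cR$, then running the same chain of identities backwards (using again $(I-\lambda(\wt B')^{-1})^{-1}=I+\lambda(\wt B'-\lambda I)^{-1}$ — wait, rather $(I-\lambda\wt B')^{-1}=I+\lambda(\wt B'{}^{-1}-\lambda I)^{-1}$ via \eqref{invres}) shows that $(\wt B')^{-1}$ satisfies \eqref{einundzwan} for $\cQ$; by the uniqueness in Part~(1), $(\wt B')^{-1}$ is unitarily equivalent to $\wt A$, hence $\wt B'$ to $\wt B=\wt A^{-1}$.

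\textbf{Expected main obstacle.} All the analytic content (the Nevanlinna--Stieltjes structure, holomorphy on $\dC\setminus\dR_+$, the existence/uniqueness of the minimal model) is already packaged in Lemma~\ref{invthm}, Theorem~\ref{obratno1} and \eqref{invres}, so the only delicate point is to carry out the various inversions at the level of \emph{linear relations} rather than bounded operators: verifying the identities $(I-(\cQ+I)^{-1})^{-1}=I+\cQ^{-1}$ and $(I-\lambda\wt A^{-1})^{-1}=I+\lambda(\wt A-\lambda I)^{-1}$ while keeping track of multivalued parts and null spaces, and confirming the minimality transfer $\wt A$ $\sM$-minimal $\iff$ $\wt A^{-1}$ $\sM$-minimal (which follows from $\sM+(\wt A^{-1}-\mu)^{-1}\sM=\sM+(\wt A-\mu^{-1})^{-1}\sM$ and the fact that $\mu\mapsto 1/\mu$ maps $\dC\setminus\dR_+$ onto itself).
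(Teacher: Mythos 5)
Your proposal is correct and follows essentially the same route as the paper: part (1) is reduced to Theorem~\ref{obratno1} via Lemma~\ref{invthm} through $\cR(\lambda)=\lambda\cQ(\lambda)$, and part (2) together with the ``moreover'' statement is obtained from the resolvent identity \eqref{invres} relating $\wt A$ and $\wt B=\wt A^{-1}$. The only cosmetic difference is that you justify the transfer of $\sM$-minimality under inversion directly from \eqref{invres} (via $\sM+(\wt A^{-1}-\mu)^{-1}\sM=\sM+(\wt A-\mu^{-1})^{-1}\sM$), whereas the paper argues through the transform $T\mapsto -T$ and Lemma~\ref{TTlemma2}; both are valid.
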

\begin{proof}
(1) Let $\cQ\in\wt \cS(\sM)$. As indicated it follows from Lemma \ref{invthm} that $\cQ(\lambda)=\frac{\cR(\lambda)}{\lambda}$ for some
$\cR(\lambda)\in \wt\cS^{(-1)}(\sM)$. Hence, from Theorem \ref{obratno1} we obtain the following representation for $\cR(\lambda)$:
\[
 \cR(\lambda)= -\left(P_\sM\left(\wt A-\lambda I\right)^{-1}\uphar\sM\right)^{-1}-\lambda I_\sM,\quad \lambda\in\dC\setminus\dR_+.
\]
Dividing this expression by $\lambda$ yields the representation \eqref{einundzwan} for $\cQ(\lambda)$.

(2) From the resolvent formula \eqref{invres} we get $\lambda(\wt A-\lambda I)^{-1}=(I-\lambda \wt A^{-1})^{-1}-I$, $\lambda\in\dC\setminus\dR_+$.
Hence, \eqref{einundzwan} can be rewritten as
\[
 \cQ(\lambda) 
 =P_\sM(I-\lambda\wt A^{-1})^{-1}\uphar\sM\left(I_\sM-P_\sM(I-\lambda\wt A^{-1})^{-1}\uphar\sM\right)^{-1}
\]
and thus
\[
 -\cQ(\lambda)^{-1}=\left(P_\sM(I-\lambda\wt A^{-1})^{-1}\uphar\sM-I_\sM\right)\left( P_\sM(I-\lambda\wt A^{-1})^{-1}\uphar\sM \right)^{-1}.
\]
This leads to \eqref{einundzwan2} with the choices $\cR(\lambda)=-\cQ(\lambda)^{-1}$ and $\wt B=\wt A^{-1}$.
To complete the proof observe that $\wt A^{-1}$ is $\sM$-minimal if and only if $\wt A$ is $\sM$-minimal.
Indeed, if $T=-I+2(I+\wt A)^{-1}$ then $-T=-I+2(I+\wt A^{-1})^{-1}$; cf. \eqref{ATtrans}, \eqref{Atrans}.
Now the claim follows from Lemma \ref{TTlemma2}, since $-T$ is simple precisely when $T$ is simple.
\end{proof}

\subsection{Nevanlinna families as Weyl families of boundary relations}\label{dehamasn}
Let $\sK$ be a Hilbert space and define
\[
 J_\sK=\begin{bmatrix}0&-iI_\sK\cr iI_\sK&0\end{bmatrix}:\begin{array}{l}\sK\\\oplus\\\sK\end{array}\to\begin{array}{l}\sK\\\oplus\\\sK\end{array}.
\]
The operator $J_\sK$ is a fundamental symmetry ($J_\sK=J_\sK^*=J^{-1}_\sK$) in the Hilbert space $\sK^2:=\sK\oplus\sK$.
Define a linear transformation from $\sM^2\oplus\sK^2$ into $(\sM\oplus\sK)^2$ by
 \[
 \cJ:\left\{\begin{bmatrix}\f\cr \f'\end{bmatrix},\begin{bmatrix}f\cr f'\end{bmatrix}\right\}\mapsto \left\{\begin{bmatrix}\f\cr f\end{bmatrix},\begin{bmatrix}-\f'\cr f'\end{bmatrix}\right\},\quad \f,\f'\in\sM,\; f,f'\in \sK;
 \]
for this and for further results and details in this connection we refer to \cite{DHMS06}.
The formula $\wt A=\cJ(\Gamma)$ so-called \textit{main transform}
establishes a one-to-one correspondence between all unitary relations $\Gamma$ from the Kre\u{\i}n space $\left<\sK^2,J_\sK\right>$ into the Kre\u{\i}n space $\left<\sM^2,J_\sM\right>$ and all selfadjoint relations $\wt A$ in $(\sM\oplus\sK)^2$.
If $\Gamma$ is a unitary relation from the Kre\u{\i}n space $\left<\sK^2,J_\sK\right>$ into the Kre\u{\i}n space $\left<\sM^2,J_\sM\right>$, then
$\Gamma$ is called a \textit{boundary relation} of $S^*$, if $S=\ker\Gamma$. A boundary relation $\Gamma$  is called minimal if $\wt A=\cJ(\Gamma)$ is $\sM$-minimal.

Let $\cT=\dom\Gamma$. Then $\cT\subset\sK^2$ is a linear relation in $\sK$. Define $\sN_\lambda(\cT):=\ker(\cT-\lambda I)$, $\lambda\in\dC\setminus\dR$, and denote
\[
\wh \sN_\lambda(\cT)=\left\{\{g_\lambda,\lambda g_\lambda\}:\, g_\lambda\in\sN_\lambda(\cT)\right\}.
\]
The \textit{Weyl family} $\cW(\lambda)$ associated with $\Gamma$ is defined as follows \cite{DHMS06}:
\[
\cM(\lambda):=\Gamma\left(\wh \sN_\lambda(\cT)\right),\quad \lambda\in\dC\setminus\dR.
\]
According to \cite[Theorem 3.9]{DHMS06} there is (up to unitary equivalence) a unique minimal boundary relation $\Gamma:\sK^2\to\sM^2$
(where $\sK$ is some Hilbert space) whose the Weyl family
coincides with the given Nevanlinna family $\cM(\lambda)$ in $\sM$. In terms of compressed resolvent this means that there is a unique (up to unitary equivalence) a $\sM$-minimal selfdjoint relation $\wt A$ in the Hilbert space $\sM\oplus\sK$ (which is the \textit{the main transform} $\cJ(\Gamma)$) such that, see \cite{DHMS06},
\[
P_\sM(\wt A-\lambda I)^{-1}\uphar\sM=-(\cM(\lambda)+\lambda I)^{-1},\quad \IM \lambda\ne 0.
\]
The latter is equivalent to \eqref{opexpr}.

Let $\wt A$ be selfadjoint relation in the Hilbert space $\sH= \sM\oplus\sK$.
With $\lambda\in\dC$ define
\[
\cM(\lambda):=\left\{\left\{\f_\lambda,-\f'_\lambda\right\}:\left\{
\begin{bmatrix}\f_\lambda\cr f_\lambda\end{bmatrix},\begin{bmatrix}\f'_\lambda\cr \lambda f_\lambda\end{bmatrix}\right\}
\in\wt A\quad\mbox{for some }\; f_\lambda\in\sK, \; \f_\lambda,\f'_\lambda\in\sM \right\}.
\]
Hence,
\[
\cM(\lambda)=\left\{\left\{P_\sM(\wt A-\lambda I)^{-1}m,-\lambda P_\sM(\wt A-\lambda I)^{-1}m-m\right\}:\; m\in\sM\right\},\quad \lambda\in\rho(\wt A).
\]
The converse statement is also true \cite{DHMS06}.

Since the transformation
\[
\Gamma^T=\begin{bmatrix}0&iI\cr-iI &0 \end{bmatrix}\Gamma
\]
defines a unitary relation as well, its main transform $\wt A'=\cJ(\Gamma^T)$ is a selfadjoint relation.
The connection between $\wt A'$ and $\wt A$ is given by
\[
\left\{\,
 \left\{ \begin{bmatrix} h \\ f \end{bmatrix},
         \begin{bmatrix} h' \\ f' \end{bmatrix}
 \right\}\right\}\in\wt A\iff \left\{\left\{ \begin{bmatrix} -ih' \\f  \end{bmatrix},
         \begin{bmatrix} ih  \\ f' \end{bmatrix}
 \right\}\right\}\in \wt A'.
\]
The Weyl family of $\Gamma^T$ coincides with $-\cM^{-1}(\lambda)$; see \cite{DHMS06}. These facts and formulas lead to
various interpretations for the results on compressed resolvents appearing in the present paper.
We will discuss these connections in more detail elsewhere; cf. also \cite{ArlHassi_2015OT}.

The notion of boundary relation is a generalization of the notion of a space of boundary values or boundary triplet.
We will recalled some basic notions on them now as well, since they will be used in some later examples.
Let $S$ be a closed densely defined symmetric operator with equal defect numbers in $\sK$. Let $\sM$ be some Hilbert space,
$\Gamma_0$ and $\Gamma_1$ be linear mappings of $\dom(S^*)$ into
$\sM$. A triplet $\{\sM, \Gamma_0, \Gamma_1\}$ is called a space of
boundary values (s.b.v.) or an ordinary boundary triplet for $S^*$ \cite{DHMS06},
\cite{GG1} if

a) for all $x,y\in\dom (S^*)$ the Green's identity

 $$(S^*x, y)-(x, S^*y)=(\Gamma_1x, \Gamma_0y)_\sM-(\Gamma_0x,
\Gamma_1y)_\sM $$
holds, and

 b) the mapping
$$\dom S^*\ni x\mapsto \Gamma x= \{\Gamma_0x, \Gamma_1x\}\in \sM\times \sM$$ is surjective.

From this definition it follows that $\ker \Gamma_k\supset\cD(S),$
$k=0,1$, the operators
\[
A_0=S^*\uphar \ker \Gamma_0,\quad A_1=S^*\uphar \ker \Gamma_1
\]
are self-adjoint extensions of $S$, and moreover, they are
transversal
\[
\dom(S^*)= \dom(A_0 )+ \dom(A_1).
\]
The function
$$M(\lambda)(\Gamma_0x_\lambda)=\Gamma_1x_\lambda, \quad x_\lambda\in\mathfrak N_\lambda,$$
where $\mathfrak N_\lambda$ is a defect subspace of $S,$ is called
the Weyl function of the boundary triplet (cf. \cite{DHMS06}). If
\[
\gamma(\lambda):=\left(\Gamma_0\uphar\sN_\lambda\right)^{-1},
\]
then $M(\lambda)=\Gamma_1\gamma(\lambda)$.
The main transform of an ordinary boundary triplet is a selfadjoint operator in $\sM\oplus\sH$ and it is determined here by the formula
(cf. \cite{ArlHassi_2015OT})
\[
 \wt A\begin{bmatrix} \Gamma_0f\cr f  \end{bmatrix}=\begin{bmatrix}-\Gamma_1f\cr S^*f   \end{bmatrix}.
\]

\section{Further properties of the Stieltjes and inverse Stieltjes families}\label{sec4}

In this section a couple of special types of Stieltjes and inverse Stieltjes families are studied.
First an analog for the notion of an inner function is introduced in the setting of Stieltjes and inverse Stieltjes families
and then inner families in the classes $\wt\cS(\sM)$ and $\wt\cS^{(-1)}(\sM)$ are characterized.
Then we study Stieltjes and inverse Stieltjes families which admit certain scaling invariance properties.
We also investigate some qualitative properties of the following two mappings arising from Lemma \ref{invthm}: namely,
\[
 \Phi_+:\cQ(\lambda)\mapsto -\cfrac{\cQ(\lambda)^{-1}}{\lambda} \quad \text{and} \quad
 \Phi_-:\cR(\lambda)\mapsto -\lambda \cR(\lambda)^{-1}.
 \]
Notice that by Lemma \ref{invthm} $\Phi_+$ maps a function $\cQ\in\wt\cS(\sM)$ back to $\wt\cS(\sM)$,
while $\Phi_-$ maps a function $\cR\in\wt\cS^{(-1)}(\sM)$ back to $\wt\cS^{(-1)}(\sM)$.
The fixed points of the mapping $\Phi_+$ in the class $\wt\cS(\sM)$ and the mapping $\Phi_-$ in the class $\wt\cS^{(-1)}(\sM)$
will be described.

\subsection{Inner functions in the Stieltjes and inverse Stieltjes classes}\label{innerfamily}

Recall that an operator valued Schur function is said to be inner/co-inner/bi-inner if almost everywhere on the unit disk the non-tangential limit values
to the unit circle $\dT$ are, respectively, isometric/co-isometric/unitary. It is proved in \cite{ArlHassi_2018} that the function $\Omega$ of
the class $\cRS(\sM)$ is inner if and only if it admits the representation
\begin{equation}\label{dblinne}
\Omega(z)=(zI +\wt D)(I+z\wt D)^{-1},\quad z\in\dC\setminus\{(-\infty,-1]\cup[1,+\infty)\},
\end{equation}
where $\wt D$ is a selfadjoint contraction in $\sM$.

The Stieltjes class $\cS(\sM)$ and the inverse Stieltjes class $\wt\cS^{(-1)}(\sM)$ are connected to the class $\cRS(\sM)$
as described in Lemma \ref{TH1}. Notice that (cf. \eqref{NevTrans2})
\[
 \RE \lambda=\cfrac{|z|^2-1}{|z+1|^2}, \quad \lambda=\cfrac{z-1}{z+1}.
\]
In particular, the transform $z\to \cfrac{z-1}{z+1}$ maps the nonreal part of the unit circle $\dT\setminus\{1,-1\}$ bijectively
onto the set $\{iy:\, y\in \dR, \; y\neq 0\}$, i.e. the imaginary axis excluding the origin.
This motivates the following definition.

\begin{definition}\label{incrs}
A family $S$ from the class $\wt\cS(\sM)\;(\wt\cS^{(-1)}(\sM))$ is said to be inner if
$$\RE(S(iy)f,f)=0,\quad f\in\dom S(iy),$$
holds for all $y\in\dR\setminus\{0\}$.
\end{definition}

\begin{theorem}\label{vildyen}
All inner families in the Stieljes and inverse Stieltjes classes are described as follows:
\begin{enumerate}
\item The inner families from the class $\wt\cS(\sM)$ are of the form
\begin{equation} \label{vildyen1}
 \cQ(\lambda)=-\lambda^{-1}\,\cB,\quad \lambda\in\dC\setminus\dR_+,
\end{equation}
where $\cB$ runs through the set of all nonnegative selfadjoint relations in $\sM$.
\item The inner families from the class $\wt\cS^{(-1)}(\sM)$ are of the form
\begin{equation}\label{vildyen2}
 \cR(\lambda)={\lambda}\,\cC,\quad  \lambda\in\dC\setminus\dR_+,
\end{equation}
where $\cC$ runs through the set of all nonnegative selfadjoint relations in $\sM$.
\end{enumerate}
\end{theorem}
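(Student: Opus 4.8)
The plan is to reduce the inner-family classification to the already-established characterization of inner functions in the class $\cRS(\sM)$ via the dictionary of Lemma~\ref{TH1}. Recall that for $\cQ\in\wt\cS(\sM)$ there is a unique $\Omega\in\cRS(\sM)$ with \eqref{formula1}, and under the substitution $z=\frac{1+\lambda}{1-\lambda}$ the relation $\RE\lambda=\frac{|z|^2-1}{|z+1|^2}$ shows that $\lambda=iy$, $y\in\dR\setminus\{0\}$, corresponds exactly to $z=\zeta\in\dT\setminus\{-1,1\}$. So the first step is to rewrite the inner condition $\RE(\cQ(iy)f,f)=0$ in terms of $\Omega$: using the second line of \eqref{formula1}, if $\{u,u'\}\in\cQ(iy)$ then $u=(I-\Omega(\zeta))h$ and $u'=(I+\Omega(\zeta))h$ for some $h\in\sM$, and a direct computation gives $2\RE(u',u)=\|(I-\Omega(\zeta))h\|^2 - \|(I-\Omega(\zeta)^*)h\|^2$ after expanding $((I+\Omega(\zeta))h,(I-\Omega(\zeta))h)$ and taking real parts; hence the inner condition for $\cQ$ is equivalent to $\|(I-\Omega(\zeta))h\|=\|(I-\Omega(\zeta)^*)h\|$ for all $h$ and all $\zeta\in\dT\setminus\{-1,1\}$. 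Since $\Omega\in\cRS(\sM)\subset\bS(\sM)$ has nontangential boundary values a.e., and these boundary values satisfy the contractivity relations, this equality forces the boundary values of $\Omega$ to be unitary a.e., i.e. $\Omega$ is bi-inner (equivalently here simply inner, as for $\cRS(\sM)$-functions the boundary values are self-adjoint-like and inner forces co-inner). The same argument, using \eqref{formula2}, handles $\cR\in\wt\cS^{(-1)}(\sM)$.

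The second step is to invoke the cited result from \cite{ArlHassi_2018}: an inner $\Omega\in\cRS(\sM)$ has the form $\Omega(z)=(zI+\wt D)(I+z\wt D)^{-1}$ for a selfadjoint contraction $\wt D$, as recorded in \eqref{dblinne}. Plugging this into \eqref{formula1} and simplifying, with $z=\frac{1+\lambda}{1-\lambda}$ one computes $I-\Omega(z)=(I+z\wt D)^{-1}\big((I+z\wt D)-(zI+\wt D)\big)=(I+z\wt D)^{-1}(1-z)(I-\wt D)$ and similarly $I+\Omega(z)=(I+z\wt D)^{-1}(1+z)(I+\wt D)$. Since $\frac{1+z}{1-z}$ equals $\frac1\lambda$ after substituting $z=\frac{1+\lambda}{1-\lambda}$ (indeed $\frac{1+z}{1-z}=\frac{1}{\lambda}$ is exactly the inverse of the defining transformation), the family \eqref{formula1} becomes, as a linear relation, $\cQ(\lambda)=\{\{(I-\wt D)h',\ \tfrac1\lambda(I+\wt D)h'\}:h'\in\sM\}$ after absorbing the common invertible factor $(I+z\wt D)^{-1}(1-z)$ into the parameter $h'$. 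This is precisely $-\lambda^{-1}\cB$ where $\cB=\{\{(I-\wt D)h',-(I+\wt D)h'\}:h'\in\sM\}$ — up to the sign, $\cB$ is the Cayley-type transform of $\wt D$, namely $\cB=(I-\wt D)(I+\wt D)^{-1}$ in the relation sense — and since $\wt D$ runs through all selfadjoint contractions in $\sM$, $\cB=-I+2(I+\wt D)^{-1}$ (cf. the transform \eqref{ATtrans}) runs through all nonnegative selfadjoint relations in $\sM$. This establishes \eqref{vildyen1}; part (2) is obtained identically from \eqref{formula2}, or simply by using $\cR=-\cQ^{-1}$ from Lemma~\ref{TH1} together with the inversion $\cB\mapsto\cB^{-1}=\cC$, which again ranges over all nonnegative selfadjoint relations.

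Conversely, one must check that every family of the form \eqref{vildyen1} (resp. \eqref{vildyen2}) is indeed an inner Stieltjes (resp. inverse Stieltjes) family. That $-\lambda^{-1}\cB\in\wt\cS(\sM)$ for nonnegative selfadjoint $\cB$ can be read off from Theorem~\ref{vildyen}'s companion results, or directly: write $\cB=-I+2(I+\wt D)^{-1}$ with $\wt D$ a selfadjoint contraction, form the corresponding $\Omega$ via \eqref{dblinne}, and observe that \eqref{formula1} reproduces $-\lambda^{-1}\cB$; since $\Omega\in\cRS(\sM)$, Lemma~\ref{TH1} gives $-\lambda^{-1}\cB\in\wt\cS(\sM)$. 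Innerness then follows because on $\lambda=iy$ one has, for $\{u,u'\}\in\cQ(iy)$ with $u=(I-\wt D)h'$, $u'=\frac{1}{iy}(I+\wt D)h'$, that $(u',u)=\frac{1}{iy}((I+\wt D)h',(I-\wt D)h')$; since $\wt D=\wt D^*$ the inner product $((I+\wt D)h',(I-\wt D)h')=\|h'\|^2-\|\wt D h'\|^2$ is real, so $(u',u)$ is purely imaginary and $\RE(u',u)=0$. The only genuine subtlety — and the point deserving the most care — is the boundary-value argument in the first step: one must justify that the a.e. equality $\|(I-\Omega(\zeta))h\|=\|(I-\Omega(\zeta)^*)h\|$ on the nonreal arc of $\dT$, for a function already known to be both Nevanlinna and Schur, upgrades to unitarity of the boundary values and hence to the representation \eqref{dblinne}. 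This is exactly where the structural results on $\cRS(\sM)$ from \cite{ArlHassi_2018} do the heavy lifting; everything else is the elementary linear-fractional bookkeeping sketched above.
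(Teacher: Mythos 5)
Your strategy is the same as the paper's: transport $\cQ$ to $\Omega\in\cRS(\sM)$ via Lemma~\ref{TH1}, note that $\lambda=iy$ corresponds to $z=\zeta\in\dT\setminus\{\pm1\}$, translate innerness of $\cQ$ into a property of $\Omega$, invoke the representation \eqref{dblinne} of inner $\cRS$-functions, and transform back. However, the step on which everything hinges is computed incorrectly. For $\{u,u'\}\in\cQ(iy)$ with $u=(I-\Omega(\zeta))h$, $u'=(I+\Omega(\zeta))h$ one has
\[
\RE(u',u)=\tfrac14\bigl(\|u'+u\|^2-\|u'-u\|^2\bigr)=\|h\|^2-\|\Omega(\zeta)h\|^2
=\bigl((I-\Omega(\zeta)^*\Omega(\zeta))h,h\bigr),
\]
which is exactly the paper's identity \eqref{In}. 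Your formula $2\RE(u',u)=\|(I-\Omega(\zeta))h\|^2-\|(I-\Omega(\zeta)^*)h\|^2$ is false: its right-hand side equals $\|\Omega(\zeta)h\|^2-\|\Omega(\zeta)^*h\|^2$, not $2(\|h\|^2-\|\Omega(\zeta)h\|^2)$. Consequently the condition you extract, $\|(I-\Omega(\zeta))h\|=\|(I-\Omega(\zeta)^*)h\|$ for all $h$, only expresses normality of $\Omega(\zeta)$ and does not force isometric (let alone unitary) values: $\Omega\equiv 0$, i.e.\ $\cQ\equiv I_\sM$, satisfies it although this $\cQ$ is plainly not inner. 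So the ``upgrade to unitarity'' that you yourself flag as the main subtlety is not just left unproved, it is unavailable from your condition. With the correct identity no boundary-limit argument is needed at all: since $\cRS$-functions are holomorphic across $\dT\setminus\{\pm1\}$, one reads off directly that $\cQ$ is inner iff $\Omega(\zeta)$ is isometric for every $\zeta\in\dT\setminus\{\pm1\}$, i.e.\ iff $\Omega$ is inner, and only then does \eqref{dblinne} apply. This is a genuine gap.

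Once that step is repaired, your back-substitution of \eqref{dblinne} into \eqref{formula1} is the right bookkeeping, but it carries sign slips: with $z=\frac{1+\lambda}{1-\lambda}$ one has $\frac{1+z}{1-z}=-\frac1\lambda$ (not $\frac1\lambda$), so the relation reads $\cQ(\lambda)=\{\{(I-\wt D)h',\,-\lambda^{-1}(I+\wt D)h'\}:h'\in\sM\}=-\lambda^{-1}\cB$ with $\cB=(I+\wt D)(I-\wt D)^{-1}\ge 0$, which is the paper's parametrization; your displayed $\cB=\{\{(I-\wt D)h',-(I+\wt D)h'\}:h'\in\sM\}$ is nonpositive and is not equal to $(I-\wt D)(I+\wt D)^{-1}$ as you then assert. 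These slips cancel in the final statement, since both parametrizations sweep out all nonnegative selfadjoint relations as $\wt D$ runs over selfadjoint contractions, but as written the intermediate identities are inconsistent. Your converse verification that $-\lambda^{-1}\cB$ is an inner Stieltjes family, and the passage to part (2) by inversion, are fine.
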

\begin{proof}
Let $\cQ$ be from the class $\wt\cS(\sM)$. Then by Lemma \ref{TH1} the function
\[
\Omega(z):=I-2\left(I+\cQ\left(\cfrac{z-1}{1+z}\right)\right)^{-1},\quad
 z\in\dC\setminus\left((-\infty,-1]\cup[1,+\infty)\right),
\]
belongs to the class $\cRS(\sM)$ and $\cQ(\lambda)=(I+\Omega(z))(I-\Omega(z))^{-1}$ with $z=\frac{1+\lambda}{1-\lambda}$.
Since
\begin{equation}\label{In}
 \RE(\cQ(\lambda)f,f)=\left([I-\Omega(z)^*\Omega(z)](I-\Omega(z))^{-1}f,(I-\Omega(z))^{-1}f\right), \quad f\in\dom \cQ(\lambda),
\end{equation}
we conclude that $\cQ$ is inner if and only if $\Omega$ is inner. Therefore, $\Omega$ has the expression
\eqref{dblinne} and hence its transform $\cQ$ takes the form \eqref{vildyen1} with $\cB=(I+\wt D)(I-\wt D)^{-1}$.

Similarly using the transform \eqref{formula2} from Lemma \ref{TH1}, one derives from \eqref{dblinne} the formula
\eqref{vildyen2} with $\cC=(I-\wt D)(I+\wt D)^{-1}$.
\end{proof}

\begin{remark}
\label{constanto}
Let $P$ be an orthogonal projection in $\sM$, then the constant family
\[
\cM(\lambda)=\{\{Pf, (I-P)f\}:\; f\in \sM\}
\]
is an inner Stieltjes family and an inner inverse Stieltjes family simultaneously.
\end{remark}

\subsection{Scale invariant Stieltjes and inverse Stieltjes families}\label{scaleinv11}

\begin{definition}\label{scainv}
A Nevanlinna family $\cM$ in the Hilbert space $\sM$ is said to be scale invariant if for some $c\in\dR_+$ and for some $p\in\{0,1,-1\}$  the relation
\[
\cM(c\lambda)=c^p\cM(\lambda)
\]
holds for all $\lambda\in\cmr$.
\end{definition}

As shown in the next theorem scale invariant Stieltjes and inverse Stieltjes families admit their own specific characterizations
for each of the choices $p=0$, $p=1$, and $p=-1$.

\begin{theorem}\label{scinv}
Let $\sM$ be a Hilbert space and let $c\in\dR_+$ and assume that $c\neq 1$.
\begin{enumerate}
\item Each Stieltjes family in $\sM$, satisfying the equality $\cQ(c\lambda)=\cQ(\lambda)$ $\forall \lambda\in\dC\setminus\dR_+ $ is of the form
$\cQ(\lambda)\equiv\cA$, where $\cA$ is a nonnegative selfadjoint relation.
\item Each inverse Stieltjes family in $\sM$, satisfying the equality $\cR(c\lambda)=\cR(\lambda)$ $\forall \lambda\in\dC\setminus\dR_+, $ is of the form
$\cR(\lambda)\equiv-\cA$, where $\cA$ is a nonnegative selfadjoint relation.
\item Each Stieltjes family in $\sM$, satisfying the equality $\cQ(c\lambda)=c^{-1}\cQ(\lambda)$ $\forall \lambda\in\dC\setminus\dR_+, $ is of the form
$$\cQ(\lambda)=-\lambda^{-1} \cB,$$ where $\cB$ is a nonnegative selfadjoint relation.
\item Each inverse Stieltjes family in $\sM$, satisfying the equality $\cR(c\lambda)=c\cR(\lambda)$ $\forall \lambda\in\dC\setminus\dR_+, $ is of the form
$$\cR(\lambda)=\lambda\,\cC,$$
  where $\cC$ is a nonnegative selfadjoint relation.
\item Each Stieltjes family $\cQ$ in $\sM$, satisfying the equality $\cQ(c\lambda)=c\cQ(\lambda)$ $\forall \lambda\in\dC\setminus\dR_+, $ and each inverse Stieltjes family $\cR$ in $\sM$, satisfying the equality $\cR(c\lambda)=c^{-1}\cR(\lambda)$ $\forall \lambda\in\dC\setminus\dR_+, $  is of the form
$$\cQ(\lambda)\equiv\left\{\left\{Pf,(I-P)f\right\}\right\},$$
where $P$ is an orthogonal projection in $\sM$.
\end{enumerate}
\end{theorem}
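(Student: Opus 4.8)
The plan is to prove statement (1) directly and then obtain statements (2)--(4) from it by the elementary substitutions of Lemmas \ref{TH1} and \ref{invthm}, which interchange $\wt\cS(\sM)$ and $\wt\cS^{(-1)}(\sM)$ while shifting the scaling exponent by $\pm1$ or negating it; statement (5) lies in a different orbit of these substitutions (the exponent can never be brought to $0$) and will require a separate, though related, argument. For (1), let $\cQ\in\wt\cS(\sM)$ satisfy $\cQ(c\lambda)=\cQ(\lambda)$. As established in the proof of Lemma \ref{TH1}, $G(\lambda):=(\cQ(\lambda)+I_\sM)^{-1}$ is a bounded operator function, holomorphic on $\dC\setminus\dR_+$, with $-G$ a Nevanlinna function; hence on $(-\infty,0)$ the selfadjoint operators $G(x)$ lie in $[0,I_\sM]$, are non-increasing in $x$, and possess a monotone limit $G(-\infty):=\lim_{x\to-\infty}G(x)$. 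The relation $\cQ(c\lambda)=\cQ(\lambda)$ gives $G(c^{n}x)=G(x)$ for all $x<0$, $n\in\dZ$; letting $n$ run through a sequence with $c^{n}\to\infty$ one has $c^{n}x\to-\infty$, so $G(x)=\lim_n G(c^{n}x)=G(-\infty)$. Thus $G$ is constant on $(-\infty,0)$, hence on all of $\dC\setminus\dR_+$ by analytic continuation, so $\cQ(\lambda)\equiv G(-\infty)^{-1}-I_\sM=:\cA$, which (being the value of a Stieltjes family at a negative point) is a nonnegative selfadjoint relation.

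For (2): by Lemma \ref{TH1} every $\cR\in\wt\cS^{(-1)}(\sM)$ equals $-\cQ^{-1}$ for a unique $\cQ\in\wt\cS(\sM)$, and $\cR(c\lambda)=\cR(\lambda)$ is equivalent to $\cQ(c\lambda)=\cQ(\lambda)$; so by (1) $\cQ\equiv\cA_{0}$ and $\cR\equiv-\cA_{0}^{-1}$, again the negative of a nonnegative selfadjoint relation. For (3): if $\cQ\in\wt\cS(\sM)$ and $\cQ(c\lambda)=c^{-1}\cQ(\lambda)$, then $\cR(\lambda):=\lambda\cQ(\lambda)\in\wt\cS^{(-1)}(\sM)$ by Lemma \ref{invthm} and a one-line computation gives $\cR(c\lambda)=\cR(\lambda)$; by (2), $\cR\equiv-\cB$, whence $\cQ(\lambda)=\lambda^{-1}\cR(\lambda)=-\lambda^{-1}\cB$. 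For (4): if $\cR\in\wt\cS^{(-1)}(\sM)$ and $\cR(c\lambda)=c\cR(\lambda)$, then $\cQ(\lambda):=\lambda^{-1}\cR(\lambda)\in\wt\cS(\sM)$ (the equivalence noted after Lemma \ref{invthm}) and $\cQ(c\lambda)=\cQ(\lambda)$; by (1), $\cQ\equiv\cA_{0}$ and $\cR(\lambda)=\lambda\cA_{0}=:\lambda\cC$. (One could alternatively read (3) and (4) off Theorem \ref{vildyen}, since the families $-\lambda^{-1}\cB$ and $\lambda\cC$ are precisely the inner ones.)

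For (5), which I expect to be the main obstacle: since $\cQ\mapsto-\cQ^{-1}$ interchanges the two classes, turns $\cQ(c\lambda)=c\cQ(\lambda)$ into $(-\cQ^{-1})(c\lambda)=c^{-1}(-\cQ^{-1})(\lambda)$, and sends $\ran P\times(\ran P)^{\perp}$ to $(\ran P)^{\perp}\times\ran P$, it suffices to treat a Stieltjes family $\cQ$ with $\cQ(c\lambda)=c\cQ(\lambda)$. Put $G(\lambda)=(\cQ(\lambda)+I_\sM)^{-1}$ as before, so on $(-\infty,0)$ the $G(x)$ are selfadjoint, $0\le G(x)\le I_\sM$, non-increasing, with kernel $\mul\cQ$ and monotone limit $G(-\infty)$. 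A short computation with the resolvent identity turns $\cQ(c\lambda)=c\cQ(\lambda)$ into the M\"obius relation
\[
 G(c\lambda)=G(\lambda)\bigl(cI_\sM-(c-1)G(\lambda)\bigr)^{-1},
\]
so $G(c^{n}x)=\varphi_{c}^{\,n}(G(x))$ via the functional calculus, where $\varphi_{c}(t)=t/(c-(c-1)t)$ fixes $0$ and $1$ and maps $[0,1]$ onto itself. Iterating toward the end of the orbit at which $c^{n}x\to-\infty$, the scalar iterates $\varphi_{c}^{\,n}$ converge pointwise on $[0,1]$ to the indicator of $\{1\}$, so $\varphi_{c}^{\,n}(G(x))$ tends strongly to the spectral projection $P$ of $G(x)$ at the eigenvalue $1$, while $G(c^{n}x)$ tends monotonically to $G(-\infty)$; hence $G(-\infty)=P$, an orthogonal projection independent of $x$. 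On the other hand $G(x)\ge P$ always (for a selfadjoint $A$ with $0\le A\le I$ the spectral projection at $1$ is $\le A$), and $G(x)\le G(-\infty)=P$ by monotonicity, so $G(x)\equiv P$ on $(-\infty,0)$, hence $G(\lambda)\equiv P$ by analytic continuation. Therefore $\cQ(\lambda)\equiv P^{-1}-I_\sM=\ran P\times(\ran P)^{\perp}=\{\{Pf,(I-P)f\}:f\in\sM\}$, as claimed.

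In this scheme the routine ingredients are the scaling identities for the three substitutions, the monotonicity of a Nevanlinna function on a real interval of holomorphy, and the resolvent-identity computation producing the M\"obius relation for $G$; the conceptual point is recognising that (5) is not reachable from (1)--(4) and that, there as in (1), scale invariance together with monotonicity of $G=(\cQ+I)^{-1}$ on $(-\infty,0)$ forces $G$ to be constant.
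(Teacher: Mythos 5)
Your proposal is correct, but it follows a genuinely different route from the paper. The paper proves (1)--(2) by passing to the function $\Omega\in\cRS(\sM)$ of Lemma \ref{TH1}, rewriting scale invariance as the functional equation $\Omega\bigl(\tfrac{z+a}{1+za}\bigr)=\Omega(z)$ and citing \cite[Proposition 6.13]{ArlHassi_2018} for its constant solutions; (3)--(4) are likewise reduced to the equation whose solutions are the inner functions of $\cRS(\sM)$ (\cite[Theorem 6.18]{ArlHassi_2018}) together with Theorem \ref{vildyen}; and (5) is proved by a short sandwich argument $0\le (c-1)\cQ(x)\le 0$ using monotonicity of the possibly unbounded relations $\cQ(x)$ in the form sense (with \cite{BHSW2010}), followed by a continuity-of-projections argument to see that $P$ does not depend on $x$. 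You instead argue entirely with the bounded transform $G(\lambda)=(\cQ(\lambda)+I_\sM)^{-1}$: in (1) scale invariance forces $G(x)$ to coincide with its monotone limit $G(-\infty)$, in (2)--(4) you use the elementary substitutions of Lemma \ref{invthm} (a reduction the paper records only in the remark after the theorem, not in its proof), and in (5) you derive the M\"obius relation $G(c\lambda)=G(\lambda)\bigl(cI_\sM-(c-1)G(\lambda)\bigr)^{-1}$, use the dynamics of $\varphi_c$ on $[0,1]$ and spectral calculus to identify $G(-\infty)$ with the spectral projection of $G(x)$ at $1$, and then sandwich $P\le G(x)\le G(-\infty)=P$. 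Each step checks out (the boundedness and holomorphy of $(\cQ(\lambda)+I_\sM)^{-1}$ is indeed available from the proof of Lemma \ref{TH1}, the relation computation behind the M\"obius formula is valid, and the pointwise limit of $\varphi_c^{\,n}$ is the indicator of $\{1\}$ in the direction $c^n x\to-\infty$). What the two approaches buy: yours is self-contained, avoids the external functional-equation results of \cite{ArlHassi_2018} and the form-monotonicity machinery for unbounded relations, and makes the projection $P$ in (5) appear directly as the $x$-independent limit $G(-\infty)$, dispensing with the "continuous path of projectors" principle; the paper's proof is shorter given the quoted results and keeps all five statements inside the single $\cRS(\sM)$ framework that organizes the rest of the paper.
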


\begin{proof}
First notice that if in $z=\frac{1+\lambda}{1-\lambda}$ one replaces
$\lambda$ by $c\lambda$, then
\begin{equation}\label{aexpc}
  \frac{1+c\lambda}{1-c\lambda} = \cfrac{z+a}{1+z a}, \quad \text{where } a=\cfrac{1-c}{1+c} \in (-1,1), \quad c>0.
\end{equation}
Hence, if $\cQ(\lambda)\in \wt\cS(\sM)$ or $\cR(\lambda)\in\wt\cS^{(-1)}(\sM)$ is represented by \eqref{formula1} or \eqref{formula2} with $\Omega(z)\in\cRS(\sM)$,
then the families $\cQ(c\lambda)$ and $\cR(c\lambda)$ are represented by means of $\Omega\left(\cfrac{z+a}{1+z a}\right)$.
Notice also that the condition $c\neq 1$ is equivalent to $a\neq 0$.

On the other hand, if $\cQ\in\wt\cS(\sM)$ and  $\Omega(z)$ (see Lemma \ref{TH1}) is defined by
\begin{equation}\label{Omega2}
\Omega(z)=I-2\left(I+\cQ\left(\cfrac{z-1}{z+1}\right)\right)^{-1}\in\cRS(\sM),
\end{equation}
then the function
\[
\Psi(z)=I-2\left(I+c\,\cQ\left(\cfrac{z-1}{z+1}\right)\right)^{-1} ,\quad  z\in\dC\setminus\{(-\infty,-1]\cup[1,+\infty)\},
\]
can be expressed with $\Omega(z)$ in the form
\[
 \Psi(z)=\left(\Omega(z)-a I_\sM\right)\left(I_\sM-a\Omega(z)\right)^{-1}, 
\]
where $a$ is given by \eqref{aexpc}. Similarly, there is the identity
\[
  I-2\left(I+\frac{1}{c}\,\cQ\left(\cfrac{z-1}{z+1}\right)\right)^{-1}
  =\left(\Omega(z)+a I_\sM\right)\left(I_\sM+a\Omega(z)\right)^{-1}.
\]
We now prove the assertions (1) -- (5) in three parts.

\textit{Verification of (1) \& (2).}
Using the transformations in Lemma \ref{TH1} and the observations just made above, the equality $\cQ(c\lambda)=\cQ(\lambda)$
with $\cQ\in\wt\cS(\sM)$ and the equality $\cR(c\lambda)=\cR(\lambda)$ with $\cR\in\wt\cS^{(-1)}(\sM)$ for all
$\lambda\in\dC\setminus\dR_+$ with $\cR\in\wt\cS^{(-1)}(\sM)$ is equivalent to the equality
\[
 \Omega\left(\cfrac{z+a}{1+z a}\right)=\Omega(z)\quad \forall z\in\dC\setminus\{(-\infty,-1]\cup[1,+\infty)\}
\]
with $\Omega\in\cRS(\sM)$.
According to \cite[Proposition 6.13]{ArlHassi_2018} this equality with $a\neq 0$ has only constant solutions.
Hence, $\Omega(z)\equiv \Omega(0)$ is a selfadjoint contraction in $\sM$. Here
\[
 -I\le \Omega(0)\le I \quad\Longleftrightarrow\quad 2(I-\Omega(0))^{-1}\geq I,
\]
and now applying Lemma \ref{TH1} once again the statements (1) and (2) follow.

\textit{Verification of (3) \& (4).}
By the above observations, the equality $\cQ(c\lambda)=c^{-1}\cQ(\lambda)$ $\forall \lambda\in\dC\setminus\dR_+$
with $\cQ\in\wt\cS(\sM)$ is equivalent to the equality
\begin{equation}\label{IdenOmega2}
  \Omega\left(\cfrac{z+a}{1+z a}\right)= \left(\Omega(z)+a I_\sM\right)\left(I_\sM+a\Omega(z)\right)^{-1} ,\quad z\in\dC\setminus\{(-\infty,-1]\cup[1,+\infty)\},
\end{equation}
where $\Omega(z)$ is given by \eqref{Omega2}.
It is proved in \cite[Theorem 6.18]{ArlHassi_2018} that the solutions to the equation \eqref{IdenOmega2} with $a\neq 0$
consist of the inner functions from the class $\cRS(\sM)$.
Since $\cQ$ is inner if and only if $\Omega$ is inner (see \eqref{In}), we conclude from Theorem \ref{vildyen})
that $\cQ(\lambda)=-\lambda^{-1} \cB$, where $\cB$ is a nonnegative selfadjoint realtion in $\sM$.
On the other hand, the equality $\cR(c\lambda)=c\cR(\lambda)$ $\forall\lambda\in\dC\setminus\dR_+$ with $\cR\in\wt\cS^{(-1)}(\sM)$
is equivalent to the equality
\[
-\cR^{-1}(c\lambda)={c}^{-1}\left(-\cR^{-1}(\lambda)\right)\;\forall\lambda\in\dC\setminus\dR_+,
\]
where $-\cR^{-1}\in\wt\cS(\sM)$. This implies that $\cR(\lambda)=\lambda\cC$, where $\cC=\cB^{-1}$ is a nonnegative selfadjoint relation.

\textit{Verification of (5).}
Suppose that the Stieltjes family $\cQ$ satisfies the equality $\cQ(c\lambda)=c\cQ(\lambda)$ $\forall \lambda\in\dC\setminus\dR_+$
with some $c>0$, $c\neq 1$. Recall that $\cQ(\lambda)$ is a Nevanlinna family with a holomorphic continuation to the negative semi-axis.
Then $\cQ(x)$ is a monotonically nondecreasing function of the negative semi-axis and by assumption $\cQ(x)\geq 0$ for all $x<0$.
Now fix $x<0$. By assumption $cQ(x)=Q(cx)$ with $c\neq 1$.
This condition implies in particular that $\dom \cQ(x)=\dom \cQ(cx)$ and, moreover, that $\cQ(x)$ and $\cQ(cx)$ generate closed nonnegative forms with the same form domain.
If, for instance, $c>1$ then $cx<x$ and using monotonicity we conclude that
\[
 0\leq c\cQ(x)=\cQ(cx)\leq \cQ(x),
\]
and hence $0\leq (c-1)\cQ(x) \leq 0$; for proper meaning of monotonicity in this general setting we refer to  \cite{BHSW2010}.
This implies that for all $f\in \dom \cQ(x)$ one has $\cQ(x)f=0$.
Thus $\dom \cQ(x)=\ker \cQ(x)$, i.e., $\cQ(x)$ is a singular relation.
Since $\cQ(x)$ is selfadjoint, $\mul \cQ(x)=\dom \cQ(x)^\perp$
and we conclude that
\[
 \cQ(x)=\left\{\{Pf, (I_\sM-P)f\}:\; f\in\sM\right\},
\]
where $P$ stands for the orthogonal projection onto $\cdom \cQ(x)=\dom\cQ(x)$.
However, $\cQ(x)$ is continuous (by assumption even holomorphic) as a function of $x$ and therefore the projector $P$ cannot depend on $x<0$
(by a general principle concerning continuous paths of projectors on connected sets). Furthermore, by holomorphy we conclude that
$\cQ(\lambda)\equiv\left\{\left\{Pf,(I-P)f\right\}\right\}$.

In the same way one treats the case, where $0<c<1$. Therefore, the statement for Stieltjes families is proven.
The statement concerning the inverse Stieltjes families is obtained by passing to the inverses.
\end{proof}

\begin{remark}
(i) There is a connection between assertions (1) and (4) and assertions (2) and (3) in Theorem \ref{scinv}, which can be seen by means of Lemma \ref{invthm}.
For instance, assume that $\cQ\in\wt\cS(\sM)$ satisfies $\cQ(c\lambda)=\cQ(\lambda)$ $\forall \lambda\in\dC\setminus\dR_+$.
Then by Lemma \ref{invthm} the function $\cR(\lambda):=\lambda \cQ(\lambda)$ belongs to $\wt\cS^{(-1)}(\sM)$ and it satisfies
\[
 \cR(c\lambda)=c\lambda \cQ(c\lambda)= c\lambda \cQ(\lambda)= c\cR(\lambda).
\]
Conversely, if $\cR(c\lambda)=c\cR(\lambda)$ then the function $\cQ(\lambda):=\lambda^{-1}\cR(\lambda)\in\wt\cS(\sM)$ satisfies the identity
$\cQ(c\lambda)=\cQ(\lambda)$. Hence, if e.g. (1) holds, then $\cQ(\lambda)\equiv\cA$, where $\cA$ is a nonnegative selfadjoint relation.
This means that $\cR(\lambda)=\lambda \cQ(\lambda)=\lambda \cA$. Similarly, we get the connection between assertions (2) and (3) in Theorem \ref{scinv}.

(ii) The intersection of the Stieltjes and inverse Stieltjes classes in $\sM$ coincides with the class
of constant functions of the form appearing in part (5) of Theorem \ref{scinv}:
\[
 \cQ(\lambda)=\left\{\{Pf, (I_\sM-P)f\}:\; f\in\sM\right\}.
\]
This is clear from the conditions $\cQ(x)\geq 0$ for Stieltjes families and  $\cQ(x)\leq 0$ for the inverse Stieltjes families, which imply
that $(\f',\f)=0$ for all $\{\f,\f'\}\in\cQ(x)$. Hence, $\dom \cQ(x)=\ker \cQ(x)$ and the claim follows; cf. the proof of (5) given above.

(iii) The statement (5) in Theorem \ref{scinv} can be also obtained by making a connection to the class $\cRS(\sM)$.
Namely, the equality $\cQ(c\lambda)=c\cQ(\lambda)$ for the Stieltjes family $\cQ$ in $\sM$ with $\lambda\in\dC\setminus\dR_+$ and $c\neq 1$
is equivalent to the equality
\[
\left(\Omega(z)-a I_\sM\right)\left(I_\sM-a\Omega(z)\right)^{-1}=\Omega\left(\cfrac{z+a}{1+z a}\right)\;\forall z\in\dC\setminus\{(-\infty,-1]\cup[1,+\infty)\},
\]
where $\Omega(z)$ is given by \eqref{Omega2} and $a\neq 0$. It is proved in \cite[Theorem 6.19]{ArlHassi_2018} that
the only solutions to this equation are the constant functions $\Omega(z)\equiv D$, $z\in \dC\setminus\{(-\infty,-1]\cup[1,+\infty)\}$, where
$D$ is a fundamental symmetry in $\sM$. Because $D=I-2P$, where $P$ is an orthogonal projection in $\sM$, this yields
\[
 \cQ(\lambda)=\left\{\{Pf, (I_\sM-P)f\}:\; f\in\sM\right\}.
\]
\end{remark}

\subsection{The mappings $\Phi_+$ and $\Phi_-$, and realizations of their fixed points}\label{sec4.3}

Recall that by Lemma \ref{invthm} the transformation
\[
\wt\cS(\sM)\ni \cQ(\lambda)\stackrel{{{\mathbf \Phi_+}}}\mapsto \wt \cQ(\lambda):=-\cfrac{\cQ(\lambda)^{-1}}{\lambda}\in\wt\cS(\sM)
\]
is well defined mapping in the Stieltjes class.
In fact, $\mathbf{\Phi_+}$ is an automorphism of the class $\wt\cS(\sM)$.
Analogously, the transformation
\[
\wt\cS^{-1}(\sM)\ni \cR(\lambda)\stackrel{{{\mathbf \Phi_-}}}\mapsto \wt \cR(\lambda):=-\lambda \cR(\lambda)^{-1}\in\wt\cS^{(-1)}(\sM)
\]
is an automorphism of the class $\wt\cS^{-1}(\sM)$.
Here the main purpose is to find the fixed points of these two mappings.

\begin{proposition}\label{PropFixedpoint}
Let the mappings $\Phi_+:\wt\cS(\sM)\to \wt\cS(\sM)$ and $\Phi_-:\wt\cS^{(-1)}(\sM)\to \wt\cS^{(-1)}(\sM)$ be as defined above.
Then with $\lambda\in \dC\setminus \dR_+$:
\begin{enumerate}
\item the mapping $\Phi_+$ has a unique fixed point
\[
\cQ_0(\lambda)=\cfrac{i}{\sqrt{\lambda}}\,I_\sM,\quad \cQ_0(-1)=I_\sM;
\]
\item
the mapping $\Phi_-$ has a unique fixed point
\[
\cR_0(\lambda)=i{\sqrt{\lambda}}\,I_\sM,\quad \cR_0(-1)=-I_\sM.
\]
\end{enumerate}
  \end{proposition}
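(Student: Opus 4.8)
The plan is to push the whole problem over to the combined Nevanlinna--Schur class $\cRS(\sM)$ via Lemma~\ref{TH1}, where the fixed point condition collapses to a quadratic operator identity that can be solved explicitly. First recall that $\cQ\mapsto\Omega$, where $\Omega(z)=I-2\bigl(I+\cQ(\tfrac{z-1}{1+z})\bigr)^{-1}$, is a bijection of $\wt\cS(\sM)$ onto $\cRS(\sM)$ whose inverse is \eqref{formula1}. The computation carried out in the proof of Lemma~\ref{invthm} shows that $\lambda\cQ(\lambda)$ is, through \eqref{formula2}, the inverse Stieltjes family with symbol $\Upsilon(z)=\bigl(zI-\Omega(z)\bigr)\bigl(I-z\Omega(z)\bigr)^{-1}\in\cRS(\sM)$, where $z=\tfrac{1+\lambda}{1-\lambda}$. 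Combining this with $\Phi_+(\cQ)=-(\lambda\cQ)^{-1}$ and with the last assertion of Lemma~\ref{TH1} --- namely that \eqref{formula1} and \eqref{formula2} applied to one and the same $\cRS(\sM)$-function produce relations linked by $\cR=-\cQ^{-1}$ --- one sees that under the above bijection $\Phi_+$ corresponds exactly to the transform $\Omega\mapsto\Upsilon$. Hence $\cQ\in\wt\cS(\sM)$ is a fixed point of $\Phi_+$ if and only if its symbol $\Omega\in\cRS(\sM)$ satisfies $\Omega(z)=\bigl(zI-\Omega(z)\bigr)\bigl(I-z\Omega(z)\bigr)^{-1}$, that is,
\[
 z\,\Omega(z)^{2}-2\,\Omega(z)+zI=0,\qquad z\in\dC\setminus\{(-\infty,-1]\cup[1,+\infty)\}.
\]

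I would then settle existence and uniqueness for this equation. For existence it suffices to verify directly that the scalar Stieltjes family $\cQ_0(\lambda)=\tfrac{i}{\sqrt{\lambda}}\,I_\sM$ --- with the branch of the square root holomorphic on $\dC\setminus\dR_+$ normalised by $\sqrt{-1}=i$, so that $\cQ_0(-1)=I_\sM$ and $\cQ_0(x)=|x|^{-1/2}I_\sM\ge0$ for $x<0$ --- is fixed by $\Phi_+$: indeed $-\lambda^{-1}\cQ_0(\lambda)^{-1}=-\lambda^{-1}\cdot(-i\sqrt{\lambda})\,I_\sM=\tfrac{i}{\sqrt{\lambda}}I_\sM$, and its symbol works out to $\Omega_0(z)=\tfrac{1-\sqrt{1-z^{2}}}{z}\,I_\sM$. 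For uniqueness, restrict a solution $\Omega$ to $x\in(-1,1)$, where $\Omega(x)=\Omega(x)^{*}$ and $-I\le\Omega(x)\le I$; for $x\neq0$ the equation becomes $\bigl(\Omega(x)-x^{-1}I\bigr)^{2}=\tfrac{1-x^{2}}{x^{2}}\,I$, so decomposing $\Omega(x)-x^{-1}I=cP_{+}-cP_{-}$ with $c=|x|^{-1}\sqrt{1-x^{2}}$ and complementary spectral projections $P_{\pm}$, the contractivity bound forces $P_{+}=0$ because the eigenvalue $x^{-1}+c$ has modulus larger than $1$; hence $\Omega(x)=\tfrac{1-\sqrt{1-x^{2}}}{x}\,I_\sM$ on $(-1,1)\setminus\{0\}$, and $\Omega(0)=0$. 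Analytic continuation on the connected slit plane then yields $\Omega\equiv\Omega_0$, so $\cQ_0$ is the unique fixed point of $\Phi_+$. This is statement~(1); the value $\cQ_0(-1)=I_\sM$ is by inspection, and substituting $\Omega_0$ into \eqref{formula1} with $z=\tfrac{1+\lambda}{1-\lambda}$ and using $z+1=\tfrac{2}{1-\lambda}$, $z-1=\tfrac{2\lambda}{1-\lambda}$, $1-z^{2}=-4\lambda(1-\lambda)^{-2}$ gives back $\cQ_0(\lambda)=(-\lambda)^{-1/2}I_\sM=\tfrac{i}{\sqrt{\lambda}}I_\sM$.

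For statement~(2) I would argue by conjugation rather than repeating the computation. By Lemma~\ref{invthm} and the discussion following it, $J\colon\cQ(\lambda)\mapsto\lambda\cQ(\lambda)$ is a bijection of $\wt\cS(\sM)$ onto $\wt\cS^{(-1)}(\sM)$, and a one-line computation gives $\Phi_-\circ J=J\circ\Phi_+$ (both send $\cQ$ to $-\cQ(\lambda)^{-1}\in\wt\cS^{(-1)}(\sM)$). Hence $\cR$ is a fixed point of $\Phi_-$ precisely when $J^{-1}(\cR)$ is a fixed point of $\Phi_+$, so $\Phi_-$ has the unique fixed point $\cR_0=J(\cQ_0)$, that is $\cR_0(\lambda)=\lambda\cdot\tfrac{i}{\sqrt{\lambda}}\,I_\sM=i\sqrt{\lambda}\,I_\sM$, with $\cR_0(-1)=i\cdot i\,I_\sM=-I_\sM$.

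The step I expect to be the main obstacle is the uniqueness argument inside $\cRS(\sM)$: pointwise on $(-1,1)$ the quadratic has two scalar roots, and one must use the contractivity constraint both to discard the spurious root and to conclude that $\Omega(x)$ is a scalar multiple of $I_\sM$; only after that does holomorphy propagate the identity $\Omega=\Omega_0$ to the whole slit plane, and hence, via \eqref{formula1}, to all of $\wt\cS(\sM)$. A minor but necessary point is keeping the branch of $\sqrt{\cdot}$ consistent so that the normalisations $\cQ_0(-1)=I_\sM$, $\cR_0(-1)=-I_\sM$ and the sign conditions $\cQ_0(x)\ge0$, $\cR_0(x)\le0$ for $x<0$ all come out correctly.
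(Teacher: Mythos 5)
Your proposal is correct, but it runs along a different track than the paper's own proof. The paper argues directly with the relation $\cQ(\lambda)$: from $\cQ(\lambda)=-\lambda^{-1}\cQ(\lambda)^{-1}$ it gets $\ker\cQ(\lambda)=\mul\cQ(\lambda)=\{0\}$ and $\dom\cQ(x)=\ran\cQ(x)$, concludes (via holomorphy and \cite[Proposition~4.18]{DHMS06}) that $\cQ(\lambda)\in\bB(\sM)$ for all $\lambda\in\dC\setminus\dR_+$, reduces the fixed-point equation to $\cQ(\lambda)^2=-\lambda^{-1}I_\sM$, and settles uniqueness by the Nevanlinna property and holomorphy on the simply connected slit plane; part (2) is then obtained by passing to inverses, much as your conjugation $\Phi_-\circ J=J\circ\Phi_+$ does. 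You instead transplant the problem into $\cRS(\sM)$ through Lemmas \ref{TH1} and \ref{invthm}; this is exactly the alternative route the paper only sketches in the remark after the proposition, except that there the uniqueness of $\Omega_0(z)=z\bigl(1+\sqrt{1-z^2}\bigr)^{-1}I_\sM$ is quoted from \cite[Proposition~6.6]{ArlHassi_2018}, whereas you prove it from scratch by solving $z\,\Omega(z)^2-2\Omega(z)+zI=0$ on $(-1,1)$ under the contractivity constraint and then invoking the identity theorem. What your route buys is self-containedness and the fact that you never have to show a fixed point is automatically a bounded operator (the symbols in $\cRS(\sM)$ are $\bB(\sM)$-valued by definition, and Lemma \ref{TH1} already handles relations); the paper's direct argument is shorter but its uniqueness step is more compressed. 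One small repair is needed in your spectral dichotomy: the root $x^{-1}+c$ has modulus larger than $1$ only for $x\in(0,1)$; for $x\in(-1,0)$ it is $x^{-1}-c$ that is inadmissible, so there the projection $P_-$ (not $P_+$) must vanish. Since for every $x\in(-1,1)\setminus\{0\}$ exactly one of the two roots lies in $[-1,1]$ and it equals $(1-\sqrt{1-x^2})/x$, your conclusion $\Omega(x)=\frac{1-\sqrt{1-x^{2}}}{x}\,I_\sM$ is unaffected, and in fact it would suffice to carry out the argument on $(0,1)$ alone before applying the identity theorem on the connected slit domain.
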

\begin{proof}
(1) Let $\cQ\in\wt\cS(\sM)$ and consider the equation
\begin{equation}\label{solQ}
 \cQ(\lambda)=-\cfrac{\cQ(\lambda)^{-1}}{\lambda}, \quad  \lambda\in \dC\setminus \dR_+.
\end{equation}
Then $\ker \cQ(\lambda)=\mul \cQ(\lambda)$ and $\dom \cQ(\lambda)=\ran \cQ(\lambda)$.
In particular, since $\cQ(x)$ is selfadjoint for $x<0$ one has $\ker \cQ(\lambda)\perp \mul \cQ(\lambda)$ and hence
$\ker \cQ(\lambda)=\mul \cQ(\lambda)=\{0\}$. Moreover, and $\dom \cQ(x)=\ran \cQ(x)$ implies that $\cQ(x)$ is a bounded selafadjoint
operator. Then by holomorphy $\cQ(\lambda)$ is bouhded when $\IM \lambda$ is sufficiently small and thus by \cite[Proposition~4.18]{DHMS06}
$\cQ(\lambda)\in\mathbf{B}(\sM)$ for all $\dC\setminus \dR_+$. Moreover, $\ker \cQ(\lambda)=\mul \cQ(\lambda)=\{0\}$ for all $\dC\setminus \dR_+$.
This implies that \eqref{solQ} is equivalent to
\begin{equation}\label{solQ2}
 (\cQ(\lambda))^2=-\cfrac{1}{\lambda}.
\end{equation}
Since $\cQ(\lambda)$ is a Nevanlinna function and holomorphic on the simply connected set $\cD\setminus\dR_+$, the unique solution
to the equation \eqref{solQ2} is the function $\cQ(\lambda)=\cfrac{i}{\sqrt{\lambda}}\,I_\sM$, $\lambda\in\dC\setminus\dR_+$.

(2) This is obtained from (1) by passing to the inverses.
\end{proof}

\begin{remark}
The equation \eqref{solQ} is equivalent to the equation
\begin{equation}\label{solQ3}
 \lambda\cQ(\lambda)=-\cQ(\lambda)^{-1}, \quad  \lambda\in \dC\setminus \dR_+.
\end{equation}
The functions on both sides of \eqref{solQ3} are inverse Stieltjes families by Lemma \ref{invthm}.
Applying the connection \eqref{formula2} in Lemma \ref{TH1} to the equation \eqref{solQ3} leads to equivalent condition
\[
 \left(zI_\sM -\Omega(z)\right)\left(I_\sM -z\Omega(z)\right)^{-1}=\Omega(z).
\]
It is shown in \cite[Proposition~6.6]{ArlHassi_2018} that the unique solution to this last equation in the class $\cRS(\sM)$ is the function
\[
 \Omega_0(z)=\frac{z I_\sM}{1+\sqrt{1-z^2}}, \quad z\in\dC\setminus\{(-\infty,-1]\cup[1,+\infty)\}.
\]
A straightforward calculation shows that $\Omega_0(z)$ and $\cQ_0(\lambda)$ in Proposition \ref{PropFixedpoint}
are connected by the formula \eqref{formula1} in Lemma \ref{TH1}.
\end{remark}

To complete this section we construct realizations for the functions $\cQ_0$ and $\cR_0$ in Proposition \ref{PropFixedpoint}.
In this way we simultaneously demonstrate the general results obtained in this paper.

\bigskip

(A) We demonstrate Theorems~\ref{TH0},~\ref{obratno1}, and \ref{stieltacc} by treating an $L_2$-model for the functions
$\cQ_0$ and $\cR_0$. Let $\sM$ be a Hilbert space and consider the weighted Hilbert space
$\sH_0=L_2(\sM,\dR_+,\rho_0(t))$, where
$$\rho_0(t)=\cfrac{2}{\pi}\,\cfrac{1}{1+t^2},\quad t\in\dR_+.$$
The inner product is given by
\[
(f,g)_{\sH_0}=\int\limits_{\dR_+}(f(t),g(t))_\sM\,\rho_0(t)dt.
\]
The Hilbert space $\sM$ we can identify with the subspace of $\sH_0$ consisting of constant functions.
It easy to see that
\[
P_\sM f(t)=\int\limits_{\dR_+}f(t)\,\rho_0(t)dt.
\]
Let $\wt A_0$ be the operator of multiplication by the squared independent variable:
\[
\wt A_0f(t)=t^2f(t),\quad \dom \wt A_0=\left\{f\in \sH_0:\int\limits_{\dR_+}t^4||f(t)||^2_\sM\,\rho_0(t)dt<\infty \right\}.
\]
Then
the operator $\wt A_0$ is selfadjoint, nonnegative, and
$$(\wt A_0-\lambda)^{-1}g(t)= \cfrac{g(t)}{t^2-\lambda},\quad g\in \sH_0,\quad \lambda\in\dC\setminus\dR_+.$$

Let $g_0(t)=g_0,$ $t\in\dR_+$, $g_0\in\sM$. Then for $\lambda\in\dC\setminus\dR_+$, using residues, one obtains
\[
P_\sM(\wt A_0-\lambda I)^{-1}g_0(t)=\int\limits_{\dR_+}\cfrac{g_0}{t^2-\lambda}\,\rho_0(t)dt=\cfrac{2g_0}{\pi}\int\limits_{\dR_+}\cfrac{dt}{(t^2-\lambda)(1+t^2)}
=-\cfrac{g_0}{\lambda+i\sqrt{\lambda}},
\]
where the branch $\IM\lambda>0\Longrightarrow\IM\sqrt{\lambda}>0$ is chosen.
Therefore,
\[
P_\sM(\wt A_0-\lambda)^{-1}\uphar\sM=-\left(\cR_0(\lambda)+\lambda I_\sM\right)^{-1},
\]
where $\cR_0(\lambda)=i\sqrt{\lambda}I_\sM$.

Clearly, $\ker  \wt A_0=\{0\}$. Then according to Remark \ref{relvsop} the operators $\wh A_0=\sJ_\sM(\wt A_0)$ and $\wh B_0=\sP_\sM(\wt A_0)$ are of the form
\[
\wh A_0\left(f(t)-\cfrac{2}{\pi}\int\limits_{\dR_+}\cfrac{it^2+1}{t^2+1}\, f(t)dt\right)
 =t^2 f(t)+\cfrac{2}{\pi}\int\limits_{\dR_+}\cfrac{i-t^2}{t^2+1}\, f(t)dt,\quad f(t)\in\dom \wt A_0,
\]
\[
\wh B_0\left(f(t)+\cfrac{2}{\pi}\int\limits_{\dR_+}\cfrac{t^2-1}{t^2+1}\, f(t)dt\right)
 =t^2 f(t)-\cfrac{2}{\pi}\int\limits_{\dR_+}\cfrac{t^2-1}{t^2+1}\, f(t)dt,\quad f(t)\in\dom \wt A_0.
\]
Now by Theorem \ref{TH0} we have
\[
P_\sM(\wh A_0-\lambda)^{-1}\uphar\sM=-\left(\cfrac{i}{\sqrt\lambda}+\lambda \right)^{-1}\, I_\sM,\quad \lambda\in\cmr,
\]
\[
P_\sM(\wh B_0-\lambda)^{-1}\uphar\sM=\left(\cfrac{i}{\sqrt\lambda}-\lambda \right)^{-1}\, I_\sM,\quad \RE\lambda<0.
\]

(B) Now we treat a realization by means of a second order differential operator on the semi-axis $\dR_+$.
Let $\sH_0=\sM\oplus L_2(\sM,\dR_+,dt)$ and let $H^2(\sM,\dR_+)$ be the Sobolev space.
Define
\begin{equation}\label{xfcnyckex}
\wt \cA_0\begin{bmatrix}u(0)\cr u(x)\end{bmatrix}=\begin{bmatrix}-u'(0)\cr -u''(x)\end{bmatrix},\; u(x)\in H^2(\sM,\dR_+).
\end{equation}
Consider a closed symmetric nonnegative operator $S_0$ in $L_2(\sM,\dR_+,dt)$:
\[
S_0u=-u'',\quad \dom S_0=\left\{u\in H^2(\sM,\dR_+):\, u(0)=u'(0)=0\right\}.
\]
The adjoint operator $S^*_0$ is given by
\[
S^*_0u=-u'',\quad \dom S^*_0=H^2(\sM,\dR_+).
\]
Define a pair of boundary mappings by
\[
\Gamma_0 u=u(0),\quad \Gamma_1(0)=u'(0),\quad u\in H^2(\sM,\dR_+).
\]
Then $\{\sM,\Gamma_0,\Gamma_1\}$ is a boundary triplet for $S_0^*$, see Subsection \ref{dehamasn}, and
\[
(S^*u,u)-(\Gamma_1u,\Gamma_0 u)_\sM=\int\limits_{\dR_+}||u'(x)||^2_\sM dx\ge 0\quad\forall u\in H^2(\sM,\dR_+).
\]
The operator $\wt A_0$ \eqref{xfcnyckex} is the main transform of the boundary triplet $\{\sM,\Gamma_0,\Gamma_1\}$. It is selfadjoint and nonnegative in $\sH_0$.

Let $\lambda\in\dC\setminus\dR_+$. The system of equations
\[
\left\{\begin{array}{l} -u'(0)-\lambda u(0)=h\\
-u''(x)-\lambda u(x)=0\end{array}\right.
\]
has a unique solution $\begin{bmatrix}\cfrac{-h}{i\sqrt{\lambda}+\lambda}\cr\cfrac{-e^{i\sqrt{\lambda}}h}{i\sqrt{\lambda}+\lambda} \end{bmatrix}$, where $h\in\sM$.
Hence,
\[
P_\sM\left(\wt \cA_0-\lambda I\right)^{-1}h=-\cfrac{h}{i\sqrt{\lambda}+\lambda},
\]
see Theorem \ref{TH0}. Now the transforms $\wh \cA_0=\sJ_\sM(\wt \cA_0)$ and $\wh \cB_0=\sP_\sM(\wt \cA_0)$ are of the form
\[
\wh\cA_0\begin{bmatrix}iu'(0)\cr u(x)\end{bmatrix}=\begin{bmatrix}iu(0)\cr- u''(x)\end{bmatrix},\qquad
\wh\cB_0\begin{bmatrix}-u'(0)\cr u(x)\end{bmatrix}=\begin{bmatrix}u(0)\cr- u''(x)\end{bmatrix},\quad u(x)\in H^2(\sM,\dR_+).
\]
It remains to note that (see Theorems~\ref{TH0},~\ref{obratno1}, and \ref{stieltacc})
\[
P_\sM(\wh\cA_0-\lambda I)^{-1}\uphar\sM=-\cfrac{1}{\cfrac{i}{\sqrt{\lambda}}+\lambda}\, I_\sM,\; \lambda\in\cmr
\]
\[
P_\sM(\wh\cB_0-\lambda I)^{-1}\uphar\sM=\cfrac{1}{\cfrac{i}{\sqrt{\lambda}}-\lambda}\, I_\sM,\;\RE\lambda<0.
\]

\begin{appendix}

%

\section{Discrete-time systems and their transfer functions} \label{AppendA}

Let $\sM,\sN$, and $\sH$ be separable Hilbert spaces. A linear
system
$$\tau=\left\{\begin{bmatrix} D&C \cr
B&F\end{bmatrix};\sM,\sN, \sK\right\}$$
with bounded linear operators
$F$, $B$, $C$, $D$  of the form
\[
 \left\{
 \begin{array}{l}
    \sigma_k=Ch_k+D\xi_k,\\
    h_{k+1}=Fh_k+B\xi_k
\end{array}
\right. \qquad k\in\dN_0,
\]
where $\{\xi_k\}\subset \sM$, $\{\sigma_k\}\subset \sN$,
$\{h_k\}\subset \sK$ is called a \textit{discrete time-invariant
system}, cf.\cite{A}. The Hilbert spaces $\sM$ and $\sN$ are called the input and
the output spaces, respectively, and the Hilbert space $\sK$ is
called the state space.
The transfer function of the system $\tau$ is defined by
\begin{equation}
\label{TrFu}
\Omega(z):=D+z C(I_{\sK}-z F)^{-1}B
\end{equation}
and it is holomorphic in a neighborhood of the origin.

Associate with $\tau$ the block operator matrix
\[
T=\begin{bmatrix} D&C \cr B&F\end{bmatrix} :
\begin{array}{l} \sM \\\oplus\\ \sK \end{array} \to
\begin{array}{l} \sN \\\oplus\\ \sK \end{array}.
\]
If $T$ is contractive, then the corresponding discrete-time system
is said to be \textit{passive} \cite{A}. If $T$ is unitary, then the
system is called conservative.
The transfer function
of a passive system $\tau$ belongs to the \textit{Schur class}
$\cS(\sM,\sN)$ of all holomorphic and contractive
$\bB(\sM,\sN)$-valued functions on the unit disk $\dD$.

The subspaces
\[
\sK^{c}:=\cspan\left\{F^n B\sM:\;n\in\dN\cup\{0\}\right\}, \quad \sK^{o}:=\cspan\left\{F^{*n} C^*\sN:\;n\in\dN\cup\{0\}\right\}
\]
are called the controllable and observable subspaces of $\tau$, respectively.
If $\sK^{c}=\sK$ (respectively, $\sK^{o}=\sK$), then the system $\tau$ is called controllable (resp. observable). If
\[
{\rm{clos}}\left\{\sK^{c}+\sK^{o}\right\}=\sK,
\]
then $\tau $ is called simple, and if $\sK^{c}=\sK^{o}=\sK$, i.e., $\tau$ is controllable and observable system, then the system is called minimal.

The resolvent $\cR_T(z)=(I-zT)^{-1}$ of the block operator $T$ is of the form (the Schur-Frobenius formula for the resolvent):
\begin{equation}
\label{Sh-Fr1}
\begin{array}{l}
\cR_T(z)=
\begin{bmatrix}(I-z\Omega(z))^{-1}&z(I-z\Omega(z))^{-1}C\cR_F(z)\cr
z\cR_F(z)B(I-z\Omega(z))^{-1}&\cR_F(z)\left(I+zB(I-z\Omega(z))^{-1}C\cR_F(z)\right)
\end{bmatrix},\\[4mm]
\quad z^{-1}\in\rho(T)\cap\rho(F),\; z\neq 0,
\end{array}
\end{equation}
where $\Omega$ is given by \eqref{TrFu}.
In particular, the Schur-Frobenius formula \eqref{Sh-Fr1} shows that
\begin{equation}
\label{sch-fr}
P_\sM(I-zT)^{-1}\uphar\sM=(I_\sM-z\Omega(z))^{-1},\quad z\in\dD.
\end{equation}
Besides, the resolvent formula \eqref{Sh-Fr1} yields
\begin{equation}\label{SPAN11}
\begin{array}{l}
\cspan\left\{T^n\sM: n\in\dN_0\right\}=\cspan\left\{(I-z T)^{-1}\sM:z\in \cU\right\}\\
=\sM\oplus\cspan\left\{F^n B\sM:\; n\in\dN\cup\{0\}\right\},
\end{array}
\end{equation}
\begin{equation}\label{SPAN22}
\begin{array}{l}
\cspan\left\{T^{*n}\sM: n\in\dN_0\right\}=\cspan\left\{(I-z T^*)^{-1}\sM:z\in \cU\right\}\\
=\sM\oplus\cspan\left\{F^{*n} C^*\sM:\; n\in\dN\cup\{0\}\right\}
\end{array}
\end{equation}
for any small neighborhood $\cU$ of the origin.

For a passive selfadjoint system $\tau=\left\{ \begin{bmatrix} D&C\cr C^*&F\end{bmatrix};\sM,\sM,\sK\right\}$
the controllable and observable subspaces coincide.

\begin{definition}\label{rsmA} \cite{ArlHassi_2018}.
Let $\sM$ be a Hilbert space. A $\bB(\sM)$-valued Nevanlinna function $\Omega$ holomorphic on $\dC\setminus\{(-\infty,-1]\cup[1,+\infty)\}$ is said to belong to the class $\cRS(\sM)$ if $-I\le
\Omega(x)\le I$ for $x\in (-1,1)$.
\end{definition}

Let $\Omega\in\cRS(\sM)$. By \cite[Theorem 5.1, Proposition 5.6]{AHS_CAOT2007}
there exists up to the unitary equivalence a unique minimal passive
selfadjoint system
$$\tau=\left\{ \begin{bmatrix} D&C\cr C^*&F\end{bmatrix};\sM,\sM,\sK\right\},$$
whose transfer function coincides with $\Omega(z)$, i.e.,
\[
\Omega(z)=D+z C(I-zF)^{-1}C^*,\quad z\in\dC\setminus\{(-\infty,-1]\cup[1,+\infty)\}.
\]
The Schur-Frobenius formula \eqref{Sh-Fr1} yields the equality
\[
P_\sM(I-zT)^{-1}\uphar\sM=(I_\sM-z\Omega(z))^{-1},\quad z\in\dC\setminus\{(-\infty,-1]\cup[1,+\infty)\}.
\]


\begin{theorem}
\label{newchar} \cite{ArlHassi_2018}. Let $\Omega$ be an operator-valued
Herglotz-Nevanlinna defined in the region
$\dC\setminus\{(-\infty,-1]\cup[1,+\infty)\}$. Then the following
statements are equivalent
\begin{enumerate}
\def\labelenumi{\rm (\roman{enumi})}
\item $\Omega$ belongs to the class $\cRS(\sM)$;
\item $\Omega$ satisfies the inequality
\[
 I-\Omega^*(z)\Omega(z)-(1-|z|^2)\cfrac{\IM \Omega (z)}{\IM z}\ge
 0,\quad \IM z\ne 0;
\]

\item the function
\[
K(z,w):=I-\Omega^*(w)\Omega(z)-\cfrac{1-\bar w z}{z-\bar w}\,(\Omega(z)-\Omega^*(w)
\]
is a nonnegative kernel on the domains
$$\dC\setminus\{(-\infty,-1]\cup [1,\infty)\},\; \IM z>0,\quad\mbox{and}\quad  \dC\setminus\{(-\infty,-1]\cup [1,\infty)\},\; \IM z<0;$$

\item the following transform of $\Omega$
\begin{equation}
\label{formula3}
\Upsilon(z)=\left(zI-\Omega(z)\right)\left(I-z\Omega(z)\right)^{-1},\quad
 z\in\dC\setminus\{(-\infty,-1]\cup[1,+\infty)\}
\end{equation}
belongs to $\cRS(\sM)$.
\end{enumerate}
\end{theorem}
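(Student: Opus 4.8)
The plan is to establish the chain of implications (i) $\Rightarrow$ (iii) $\Rightarrow$ (ii) $\Rightarrow$ (i), together with the separate equivalence (i) $\Leftrightarrow$ (iv). Write $\cD:=\dC\setminus\{(-\infty,-1]\cup[1,+\infty)\}$. Since $\cD\cap\dC_+$ is the whole open upper half-plane and $\Omega$ is, by hypothesis, Herglotz--Nevanlinna on $\cD$ (in particular $\Omega(\bar z)=\Omega(z)^*$), membership of $\Omega$ in $\cRS(\sM)$ is equivalent to the single extra requirement $-I\le\Omega(x)\le I$ for $x\in(-1,1)$; this is the property that must be produced or used at every step. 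The implication (iii) $\Rightarrow$ (ii) is then immediate on putting $w=z$: the factor $\tfrac{1-\bar wz}{z-\bar w}$ becomes $\tfrac{1-|z|^2}{2i\,\IM z}$ and $\Omega(z)-\Omega(w)^*$ becomes $2i\,\IM\Omega(z)$, so $K(z,z)$ is exactly the left-hand side of the inequality in (ii). For (ii) $\Rightarrow$ (i) I would let $z=x+iy\to x$ with $x\in(-1,1)$: by the Herglotz property $\Omega(x)=\Omega(x)^*$ and $y^{-1}\IM\Omega(x+iy)\to\Omega'(x)\ge 0$, so in the limit (ii) becomes $I-\Omega(x)^2-(1-x^2)\Omega'(x)\ge 0$; since $1-x^2>0$ and $\Omega'(x)\ge 0$, this forces $\Omega(x)^2\le I$, i.e. $-I\le\Omega(x)\le I$, hence $\Omega\in\cRS(\sM)$.

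The substantive step is (i) $\Rightarrow$ (iii). Here I would invoke the realization recalled in Appendix~\ref{AppendA}: by \cite{AHS_CAOT2007}, every $\Omega\in\cRS(\sM)$ is the transfer function $\Omega(z)=D+zC(I-zF)^{-1}C^*$ of a minimal passive selfadjoint system, so that $T=\begin{bmatrix} D&C\\ C^*&F\end{bmatrix}$ is a selfadjoint contraction on $\sM\oplus\sK$ and $I-T^2\ge 0$. Set $R(z):=(I-zF)^{-1}$, which is holomorphic on $\cD$ because $\sigma(F)\subset[-1,1]$, and $\Phi(z):=\col\bigl(I_\sM,\,zR(z)C^*\bigr)\in\bB(\sM,\sM\oplus\sK)$. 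Using $R(z)=I+zFR(z)$ one gets $T\Phi(z)=\col\bigl(\Omega(z),\,R(z)C^*\bigr)$; using in addition $T=T^*$, the commutativity of the resolvents of $F$, and the identity $\tfrac{\Omega(z)-\Omega(w)^*}{z-\bar w}=CR(z)R(\bar w)C^*$, one obtains
\[
 \Phi(w)^*(I-T^2)\Phi(z)=\Phi(w)^*\Phi(z)-(T\Phi(w))^*(T\Phi(z))=K(z,w).
\]
Since $I-T^2\ge 0$, the factorization $K(z,w)=\Phi(w)^*(I-T^2)\Phi(z)$ shows at once that $K$ is a nonnegative kernel on each of $\cD\cap\dC_+$ and $\cD\cap\dC_-$ (indeed $\sum_{j,k}(K(z_k,z_j)f_k,f_j)=\|(I-T^2)^{1/2}\sum_k\Phi(z_k)f_k\|^2\ge 0$). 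The work of this step is therefore upstream: the genuine obstacle is producing the selfadjoint--contraction realization in the first place, i.e. recognizing that a Herglotz function on $\cD$ contractive on $(-1,1)$ becomes, after the conformal change of variable onto $\dD$, a Schur function admitting a symmetric conservative realization; once that is granted, (iii) is a short computation, and it is exactly here that one needs $T=T^*$ and not merely that $T$ is contractive, since for a general contractive $T$ the transfer function is only a Schur function and $K$ need not be positive.

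Finally, for (i) $\Leftrightarrow$ (iv) I would first note that $\Upsilon(z)=(zI-\Omega(z))(I-z\Omega(z))^{-1}$ is well defined and holomorphic on $\cD$, since $(I-z\Omega(z))^{-1}=P_\sM(I-zT)^{-1}\uphar\sM$ is, and that the map is an involution, because a short algebraic identity gives $(zI-\Upsilon(z))(I-z\Upsilon(z))^{-1}=\Omega(z)$. A direct computation of $\Upsilon(z)-\Upsilon(z)^*$ yields
\[
 \IM\Upsilon(z)=\IM z\;\bigl((I-z\Omega(z))^*\bigr)^{-1}\Bigl(I-\Omega(z)^*\Omega(z)-(1-|z|^2)\tfrac{\IM\Omega(z)}{\IM z}\Bigr)(I-z\Omega(z))^{-1},
\]
so $\Upsilon$ is Herglotz--Nevanlinna on $\cD$ precisely when the inequality in (ii) (equivalently (i)) holds; moreover for $x\in(-1,1)$ the value $\Upsilon(x)=(xI-\Omega(x))(I-x\Omega(x))^{-1}$ is the operator M\"obius transform with real parameter $x$, $|x|<1$, of the selfadjoint contraction $\Omega(x)$, hence again a selfadjoint contraction. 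This proves (i) $\Rightarrow$ (iv); the converse follows by applying the same argument to $\Upsilon$ in place of $\Omega$ and using the involution property to recover $\Omega\in\cRS(\sM)$. Combined with the chain above, this gives the equivalence of (i)--(iv).
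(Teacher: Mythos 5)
You should note first that this paper does not actually prove Theorem~\ref{newchar}: it is quoted in Appendix~\ref{AppendA} with a citation to \cite{ArlHassi_2018}, and the only ingredients the present paper supplies are the realization statements recalled there (the minimal passive selfadjoint system with transfer function $\Omega$, and the Schur--Frobenius identity $P_\sM(I-zT)^{-1}\uphar\sM=(I_\sM-z\Omega(z))^{-1}$ on $\dC\setminus\{(-\infty,-1]\cup[1,+\infty)\}$). So there is no in-paper proof to compare against; your argument has to be judged on its own, with those quoted facts as permissible input.

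On that basis your proof is correct, and its logical architecture is sound: (iii)$\Rightarrow$(ii) by taking $w=z$ in the kernel; (ii)$\Rightarrow$(i) by the boundary limit $y^{-1}\IM\Omega(x+iy)\to\Omega'(x)\ge 0$ on $(-1,1)$, which kills the derivative term and leaves $\Omega(x)^2\le I$ for the selfadjoint value $\Omega(x)$; (i)$\Rightarrow$(iii) via the selfadjoint contractive realization $T=\begin{bmatrix} D&C\cr C^*&F\end{bmatrix}$ and the factorization $K(z,w)=\Phi(w)^*(I-T^2)\Phi(z)$, which I have checked (it uses exactly $I+zFR(z)=R(z)$, $R(w)^*=R(\bar w)$, the resolvent identity $zR(z)-\bar wR(\bar w)=(z-\bar w)R(z)R(\bar w)$, and $T=T^*$, and it is precisely here that selfadjointness, not mere contractivity, is needed); and (i)$\Leftrightarrow$(iv) via the exact identity $\IM\Upsilon(z)=\IM z\,\bigl((I-z\Omega(z))^*\bigr)^{-1}\bigl(I-\Omega(z)^*\Omega(z)-(1-|z|^2)\tfrac{\IM\Omega(z)}{\IM z}\bigr)(I-z\Omega(z))^{-1}$, the M\"obius functional calculus on $(-1,1)$, and the involution $(zI-\Upsilon)(I-z\Upsilon)^{-1}=\Omega$. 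The only points you leave implicit, all routine, are: the symmetry $\Upsilon(\bar z)=\Upsilon(z)^*$ (it follows since $(\bar zI-\Omega(z)^*)$ and $(I-\bar z\Omega(z)^*)^{-1}$ commute); that statement (iv) must be read as including the well-definedness of $\Upsilon$, i.e.\ invertibility of $I-z\Omega(z)$ on the whole cut plane (for (i)$\Rightarrow$(iv) you correctly obtain this from the realization rather than from contractivity, which is only available on $\dD$); and that in (ii)$\Rightarrow$(i) the limit is taken in norm, which is legitimate because $\Omega$ is holomorphic across $(-1,1)$. So the one genuinely heavy input is the quoted realization theorem from \cite{AHS_CAOT2007}, which you identify explicitly; given that, your chain (i)$\Rightarrow$(iii)$\Rightarrow$(ii)$\Rightarrow$(i) together with (i)$\Leftrightarrow$(iv) is a complete and legitimate proof of the statement as cited.
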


\begin{theorem}\label{inerdil} \cite{ArlHassi_2018}.
Let $\sM$ be a Hilbert space and let $\Omega\in\cRS(\sM)$. Then there exist a Hilbert space ${\wt\sM}$ containing $\sM$ as a subspace and a selfadjoint contraction $\wt T$ in $\wt \sM$ such that for all $z\in\dC\setminus\{(-\infty,-1]\cup[1,+\infty)\}$ the equality
\begin{equation}\label{ajhvelbk}
\Omega(z)=P_\sM(z I_{\wt\sM}+\wt T)(I_{\wt\sM}+z\wt T)^{-1}\uphar\sM
\end{equation}
holds. Moreover, the pair $\{\wt\sM,\wt T\}$ can be chosen such that $\wt T$ is $\sM$-minimal, i.e.,
\begin{equation}
\label{minim11}\cspan\{\wt T^n\sM:\;n\in\dN_0\}=\wt\sM.
\end{equation}
The function $\Omega$ is inner if and only if $\wt\sM=\sM$ in the representation \eqref{minim11}.

If there are two representations of the form \eqref{ajhvelbk} with pairs $\{\wt \sM_1, \wt T_1\}$ and $\{\wt \sM_2, \wt T_2\}$  that are $\sM$-simple, then
there exists a unitary operator $\wt U\in\bB(\wt\sM_1,\wt\sM_2)$ such that
\[
\wt U\uphar\sM=I_\sM,\quad \wt T_2\wt U=\wt U\wt T_1.
\]
\end{theorem}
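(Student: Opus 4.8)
The plan is to reduce the statement to the existence and uniqueness (up to unitary equivalence) of a \emph{minimal passive selfadjoint realization} of a cleverly chosen companion function, plus one application of the Schur--Frobenius resolvent formula. The key preliminary observation is that compressing the Cayley-type function $\Theta(z):=(zI_{\wt\sM}+\wt T)(I_{\wt\sM}+z\wt T)^{-1}$ of a selfadjoint contraction $\wt T$ on $\wt\sM=\sM\oplus\sK$ does \emph{not} return the transfer function of $\wt T$: writing $\wt T=\begin{bmatrix}D&C\\ C^{*}&F\end{bmatrix}$ and $\Psi(z)=D+zC(I-zF)^{-1}C^{*}$, computation of the $(1,1)$-entry of $\Theta(z)$ using the Schur--Frobenius formula \eqref{Sh-Fr1} applied to $(I_{\wt\sM}+z\wt T)^{-1}=(I_{\wt\sM}-(-z)\wt T)^{-1}$ yields
\[
 P_\sM\Theta(z)\uphar\sM=(zI_\sM+\Psi(-z))(I_\sM+z\Psi(-z))^{-1}.
\]
So one must pick $\wt T$ so that this equals $\Omega(z)$, which forces $\Psi(-z)=-\Upsilon(z)$, i.e.\ $\Psi=-\Upsilon(-\cdot)$, where $\Upsilon$ is the transform \eqref{formula3}.

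With this target in mind I would first introduce $\Upsilon(z)=(zI-\Omega(z))(I-z\Omega(z))^{-1}$, which lies in $\cRS(\sM)$ by the equivalence (i)$\Leftrightarrow$(iv) of Theorem~\ref{newchar}; note that $\Omega\mapsto\Upsilon$ is involutive, since at each fixed $z$ the operators $\Upsilon(z)$ and $\Omega(z)$ are rational functions of one another and hence commute, giving $\Omega(z)=(zI-\Upsilon(z))(I-z\Upsilon(z))^{-1}$. Next set $\Psi(z):=-\Upsilon(-z)$; since $z\mapsto-\Omega(-z)$ again belongs to $\cRS(\sM)$ and $\Psi$ is precisely its transform \eqref{formula3}, one gets $\Psi\in\cRS(\sM)$. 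Now apply the realization result recalled in Appendix~\ref{AppendA} (based on \cite{AHS_CAOT2007}) to $\Psi$: there is, up to unitary equivalence, a unique minimal passive selfadjoint system $\tau=\{\,\wt T;\sM,\sM,\sK\,\}$ with $\wt T$ a selfadjoint contraction on $\wt\sM:=\sM\oplus\sK$ and transfer function $\Psi$. By the displayed identity and the involution, $P_\sM(zI_{\wt\sM}+\wt T)(I_{\wt\sM}+z\wt T)^{-1}\uphar\sM=(zI_\sM-\Upsilon(z))(I_\sM-z\Upsilon(z))^{-1}=\Omega(z)$ for $z$ near the origin, and since $\wt T=\wt T^{*}$ is a contraction the factor $I_{\wt\sM}+z\wt T$ is boundedly invertible on the simply connected domain $\dC\setminus\{(-\infty,-1]\cup[1,+\infty)\}$, so by analytic continuation \eqref{ajhvelbk} holds throughout.

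For the remaining clauses: because $\wt T=\wt T^{*}$ the controllable and observable subspaces of $\tau$ coincide, so minimality of $\tau$ is equivalent, via \eqref{SPAN11}, to $\cspan\{\wt T^{n}\sM:n\in\dN_0\}=\sM\oplus\cspan\{F^{n}C^{*}\sM:n\in\dN_0\}=\wt\sM$, which is exactly the $\sM$-minimality \eqref{minim11}. Moreover $\wt\sM=\sM$ iff $\sK=\{0\}$ iff the minimal realization of $\Psi$ has trivial state space iff $\Psi$ (equivalently $\Upsilon$) is constant; a short computation through the involution shows that this holds iff $\Omega(z)=(zI+D)(I+zD)^{-1}$ with $D=D^{*}$, $\|D\|\le1$, which by \eqref{dblinne} is precisely the condition that $\Omega$ be inner. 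Finally, if $\{\wt\sM_1,\wt T_1\}$ and $\{\wt\sM_2,\wt T_2\}$ are two $\sM$-minimal pairs satisfying \eqref{ajhvelbk}, the block computation recovers the transfer function of each associated selfadjoint system as the \emph{same} function $\Psi$ (uniquely determined from $\Omega$ via $\Psi(-z)=-\Upsilon(z)$), and $\sM$-minimality makes both systems minimal; the uniqueness part of the realization theorem then produces a unitary $V\colon\sK_1\to\sK_2$ intertwining the block entries, and $\wt U:=I_\sM\oplus V$ is the required unitary with $\wt U\uphar\sM=I_\sM$ and $\wt T_2\wt U=\wt U\wt T_1$.

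The only genuinely non-routine step is the identification of the companion function $\Psi=-\Upsilon(-\cdot)$: one has to notice that the compressed Cayley-type function of a selfadjoint contraction reproduces its transfer function evaluated at $-z$ and run through the involutive $\Upsilon$-transform, so it is $-\Upsilon(-\cdot)$, not $\Omega$ itself, that must be realized. After that the proof is just the Schur--Frobenius block algebra together with direct appeals to results already in the paper (Theorem~\ref{newchar}, \eqref{dblinne}, \eqref{SPAN11}, and the minimal-realization theorem of Appendix~\ref{AppendA}).
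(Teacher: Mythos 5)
There is nothing in this paper to compare against: Theorem~\ref{inerdil} is imported verbatim from \cite{ArlHassi_2018} and no proof is given here, so your argument has to be judged on its own, relative to the facts the paper does recall. On that basis it is correct. The pivotal identity
$P_\sM(zI_{\wt\sM}+\wt T)(I_{\wt\sM}+z\wt T)^{-1}\uphar\sM=(zI_\sM+\Psi(-z))(I_\sM+z\Psi(-z))^{-1}$
checks out: writing $(zI+\wt T)(I+z\wt T)^{-1}=z^{-1}I+\tfrac{z^2-1}{z}(I+z\wt T)^{-1}$ and compressing via \eqref{Sh-Fr1}/\eqref{sch-fr} gives exactly this, and solving $(zI+W)(I+zW)^{-1}=\Omega(z)$ for $W$ (using that $I-z\Omega(z)$ is boundedly invertible on $\dD$, a point you should state explicitly since it is what makes $\Psi$ \emph{uniquely} determined in the uniqueness clause) yields $W=-\Upsilon(z)$, i.e. $\Psi=-\Upsilon(-\cdot)$, which lies in $\cRS(\sM)$ by Theorem~\ref{newchar}\,(iv) applied to $-\Omega(-\cdot)$. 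From there the appeal to the minimal passive selfadjoint realization recalled in Appendix~\ref{AppendA}, the involutivity of the transform \eqref{formula3} (legitimate, since $\Upsilon(z)$ is a rational expression in $\Omega(z)$ and hence commutes with it), the identification of \eqref{minim11} with minimality of the system via \eqref{SPAN11} and coincidence of controllable and observable subspaces, the analytic continuation using $\sigma(\wt T)\subset[-1,1]$, and the uniqueness argument with $\wt U=I_\sM\oplus V$ are all sound. Your construction is, in substance, the same mechanism used in the cited source (realizing the $\Upsilon$-type companion of $\Omega$ rather than $\Omega$ itself and undoing the Cayley-type algebra by compression), so this is a faithful reconstruction rather than a genuinely different route. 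One caveat worth flagging: your proof of the clause ``$\Omega$ inner $\iff\wt\sM=\sM$'' rests on \eqref{dblinne}, which this paper also only quotes from \cite{ArlHassi_2018}; as a derivation from the facts recalled here that is acceptable, but it is not an independent proof of that equivalence.
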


%

\end{appendix}


\end{document}